\newtheorem{thm}{Theorem}[section]
\newtheorem{dfn}{Definition}[section]
\newtheorem{prop}{Proposition}[section]
\newtheorem{lm}{Lemma}[section]
\newtheorem{cor}{Corollary}[section]
\newtheorem{rem}{Remark}[section]
\newcommand{\mb}[1]{\mathbb{#1}}
\newcommand{\mf}[1]{\mathfrak{#1}}
\newcommand{\mc}[1]{\mathcal{#1}}
\newcommand{\rr}{\mathbb{R}}
\newcommand{\zz}{\mathbb{Z}}
\newcommand{\ey}{\frac{1}{2}}
\newcommand{\R}{\mf{R}}
\newcommand{\A}{\mc{A}}
\newcommand{\e}{\mathbf{e}}
\newcommand{\dl}{\delta}
\newcommand{\ep}{\varepsilon}
\newcommand{\Lmd}{\Lambda}
\newcommand{\lmd}{\lambda}
\newcommand{\om}{\omega}
\newcommand{\Om}{\Omega}
\newcommand{\sg}{\sigma}
\newcommand{\tht}{\theta}
\newcommand{\lmn}{\Lambda^n}
\newcommand{\hlm}{\hat{\Lambda}^n}
\newcommand{\qd}{\dot{q}}
\newcommand{\qey}{q^{\ep_1}}
\newcommand{\qee}{q^{\ep_2}}
\newcommand{\qt}{\tilde{q}}
\newcommand{\qo}{q^{\om}}
\newcommand{\qe}{q^{\ep}}
\newcommand{\yt}{\tilde{y}}
\newcommand{\xt}{\tilde{x}}
\newcommand{\qh}{\hat{q}}
\newcommand{\xf}{\mf{x}}
\newcommand{\yf}{\mf{y}}
\newcommand{\zf}{\mf{z}}
\newcommand{\xfd}{\dot{\mf{x}}}
\newcommand{\thd}{\dot{\tht}}
\newcommand{\xd}{\dot{x}}
\newcommand{\yd}{\dot{y}}
\newcommand{\omn}{\Om_{[n/2]}}
\newcommand{\I}{\mathbf{I}}
\newcommand{\J}{\mathbf{J}}
\newcommand{\N}{\mathbf{N}}
\newcommand{\en}{\frac{n}{2}}
\begin{document}

	\title[spatial double choreography]{Spatial double choreographies of the Newtonian $2n$-body problem}
	\author{Guowei Yu}
	\email{yu@ceremade.dauphine.fr}
	%\date{03/9/2017}
	
	\address{University of Paris-Dauphine \& IMCCE, Paris Observatory\footnote{current address: Dipartimento di Matematica ``Giuseppe Peano'', Universit\`a degli Studi di Torino, Italy}}
	
	\begin{abstract} 
	    In this paper, for the spatial Newtonian $2n$-body problem with equal masses, by proving the minimizers of the action functional under certain symmetric, topological and monotone constraints are collision-free, we found a family of spatial double choreographies, which have the common feature that half of the masses are circling around the $z$-axis clockwise along a spatial loop, while the motions of the other half masses are given by a rotation of the first half around the $x$-axis by $\pi$.

	    Both loops are simple, without any self-intersection, and symmetric with respect to the $xz$-plane and $yz$-plane. The set of intersection points between the two loops is non-empty and contained in the $xy$-plane. The number of such double choreographies grows exponentially as $n$ goes to infinity.  
	\end{abstract}
	
	\maketitle
	
\section{Introduction} \label{sec:intro}

The Newtonian $N$-body problem describes the motion of $N$ point masses $m_i \in \rr^+$, $i \in \N =\{0, \dots, N-1\},$ according to Newton's law of universal gravity:
\begin{equation} \label{eqn:nbody}
m_i \ddot{q}_i = \frac{\partial}{\partial q_i} U(q), \quad \forall i \in \N,
\end{equation}
where $q=(q_i)_{i \in \N}$ ($q_i =(x_i, y_i, z_i) \in \rr^3$ represents the position of $m_i$) and $U(q)$ is the potential function (the negative potential energy)
$$ U(q) = \sum_{0 \le i < j \le N-1} \frac{m_i m_j}{|q_i -q_j|}. $$
Equation \eqref{eqn:nbody} is the Euler-Lagrange equation of the action functional 
\begin{equation}
\label{eqn:action} \A_{T_1, T_2}(q) = \int_{T_1}^{T_2} L(q(t), \qd(t)) \, dt, \;\; q \in H^1([T_1, T_2], \rr^{3N}),
\end{equation}
where $H^1([T_1, T_2], \rr^{3N})$ is the space of Sobolev paths and $L(q, \qd)$ is the associated Lagrangian
$$ L(q, \qd) = K(\qd)+U(q), \;\; K(\qd) =\ey \sum_{i=0}^{N-1} m_i|\qd_i|^2 . $$
For simplicity, we set $\A_T(q)= \A_{0, T}(q)$, for any $T>0$. 

If $q \in H^1([T_1, T_2], \rr^{3N})$ is a collision-free critical point of the action functional $\A_{T_1, T_2}$, then it is a smooth solution of equation \eqref{eqn:nbody}. By \emph{collision-free}, we mean $q(t) \in \rr^{3N} \setminus \Delta$, for any $t \in [T_1, T_2]$, where $\Delta$ is the set of collision configurations
$$ \Delta =\{q =(q_i)_{i \in \N} \in \rr^{3N}| \; q_{i_1} =q_{i_2}, \text{ for some } i_1 \ne i_2 \in \N \}. $$

However due to the weakness of Newtonian potential, the existence of collision along a path does not mean its action value must be infinity. This was already noticed by Poincar\'e. As a result, an action minimizer may contain collision. Because of this, for a long time variational methods were not very useful in this classic problem. The breakthroughs were the proofs of the Hip-Hope solution \cite{CV00} of the four body problem and the Figure-Eight solution \cite{CM00} of the three body problem by action minimization method. The key idea behind both proofs is the invariance of the Lagrangian under the permutation of equal masses, which allows the authors to impose symmetric constraints on the paths. Since then, many new periodic and quasi-periodic solutions have been found using this method. It is impossible to give a complete list, to name a few of them, see \cite{BFT08}, \cite{BT04}, \cite{FT04}, \cite{Fe06}, \cite{Sh06} and the references with in . 

We briefly recall the idea of symmetric constraints in the following, for details see \cite{FT04}. Let $\Lmd= H^1(\rr / T\zz, \rr^{3N})$ be the space of $T$-periodic Sobolev loops and $G$ a finite group with its action on the loop space $\Lmd$ defined as follows
$$ g\big(q(t)\big)= \big(\rho(g)q_{\sg (g^{-1})(0)}, \dots, \rho(g) q_{\sg (g^{-1})(N-1)}\big) \big(\uptau(g^{-1})t \big), \; \forall g \in G, $$
where
\begin{enumerate}
 \item[(a).] $\uptau: G \to O(2)$ representing the action of $G$ on the time circle $\rr/ T\zz$;
 \item[(b).] $\rho: G \to O(3)$ representing the action of $G$ on $\rr^3$;
 \item[(c).] $\sg: G \to \mc{S}_{\N}$ representing the action of $G$ on the index set $\N$, where $\mc{S}_{\N}$ is the permutation group of $\N$. 
\end{enumerate}
$\Lmd^G = \{ q \in \Lmd|\; g(q(t)) = q(t), \; \forall g \in G \}$ is the space of \emph{$G$-equivariant loops}. If the masses satisfy the following condition:
\begin{equation}
\sg(g)i = j, \; \text{ for some } g \in G \text{ and } i, j \in \N  \Rightarrow m_i = m_j,
\end{equation}
then the action functional $\A$ is invariant under the group action. By Palais' symmetric principle \cite{Pa79}, a critical point of $\A$ in $\Lmd^{G}$ is a critical point of $\A$ in $\Lmd$ as well. 

Based on Marchal's average method (\cite{Mc02}, \cite{C02}), Ferrario and Terracini \cite{FT04} proved any local minimizer of the action functional in $\Lmd^{G}$ must be collision-free, if the group action of $G$ satisfies the \emph{rotating circle property}. 

Compare to the idea of imposing symmetric constraints described as above, an older idea, at least goes back to Poincar\'e, is to impose topological constraints on the path or loop space, see \cite{Mo98} and \cite{C02}. However when topological constraints are involved (with or without additional symmetric constraints), it is much harder to show a corresponding action minimizer is collision-free. For one reason, Marchal's average method or the \emph{rotating circle property} does not work, and in some cases it has been proven that an action minimizer does contain collision, see \cite{Go77}, \cite{Ve01} and \cite{Mo02}. Because of this, fewer results are available along this line, see \cite{Ch08}, \cite{Ch13}, \cite{CL09}, \cite{FGN11}, \cite{Sh14}, \cite{WZ16} and \cite{Y15c}.

Among all the solutions found by minimization methods in the last fifteen years, a particularly interesting family is the \emph{simple choreographies}: a simple choreography is a periodic solution of the $N$-body problem with all the masses travel on a single loop in $\rr^2$ or $\rr^3$. Here we do not consider those solutions that are simple choreographies in some rotating coordinates.  

Although a lot of simple choreographies have been found numerically by Sim\`o \cite{Si00} and others. Rigorous proofs are only available for some of them: the rotating $n$-gon \cite{BT04}, the Figure-Eight of three body \cite{CM00}, the Figure-Eight type of odd bodies \cite{FT04} and the Super-Eight of four body \cite{Sh14}. All these examples belong to the sub-family called \emph{linear chains} (\cite{CGMS02}), which looks like a sequence of consecutive bubbles. A variational proof of the linear chains has been established by the author recently in \cite{Y15c}. There are also simple choreographies not from the family of linear chains, for example see \cite{OX15}.

All these solutions are proven by action minimization method under the assumption that all the masses are equal. It is still an open problem whether there are simple choreographies with unequal masses \cite{C04}. In the rest of the section, we assume all the masses are equal and for simplicity set $m_i=1$, $\forall i \in \N$. 

We remark that all the simple choreographies mentioned above that have been proven, are \emph{planar solutions}, i.e., the masses always stay on a fixed plane in $\rr^3$. As a contrary, there are also \emph{spatial solutions}, where the masses do not always contained in any fixed plane of $\rr^3$. 

In general, when we are considering a minimization problem in $\rr^3$, it is not an easy task to determine whether an action minimizer is planar or spatial, see \cite{Ch07}. For example, the action of the dihedral group $D_6$ of the Figure-Eight of the planar three-body problem can be extended to $\rr^3$, however so far no proof is available regarding whether the corresponding action minimizer is planar or spatial, see \cite{CFM05} or \cite{Fe06}. The same problem also occurs if we try to extend the minimization problem considered in \cite{Y15c} to $\rr^3$. Up to our knowledge, this problem can be solved in two special cases: (i) the minimizer is a relative equilibrium; (ii) the minimizer is found in certain uniform rotating coordinates, see \cite{BT04}, \cite{C05}, \cite{CF09}, \cite{CFM05}, \cite{Fe06} and \cite{TV07}.

In this paper we try to demonstrate that from a given planar simple choreographies, there may be a systematic way of constructing an entire family of spatial \emph{double choreographies} (a double choreography is a periodic solution with the masses traveling on two geometrically different loops.). First we give an intuitive idea as follows:

%In contrary to the simple choreographies, there are also \emph{multiple choreographies}: periodic solutions with masses traveling on several geometrically different loops. In particular, if there two different loops, the corresponding periodic solution is called a \emph{double choreography}. Proofs of some planar double choreographies can be found in \cite{Sh06} and \cite{OX15} with action minimization method as well. 

{\em Let's assume there are $n$ masses moving along a given simple choreography lying in a plane parallel to and above the $xy$-plane (we may also fix the center of mass on the $z$-axis). Now assume there are another $n$ masses whose motion is given by a rotation of $\pi$ around the $x$-axis of the first $n$ masses. This gives a \textbf{fake} spatial double choreography. By \textbf{fake}, we mean one has to ignore the gravitational force between the two $n$-body systems, otherwise such a double choreography can't exist. 

From a variational point of view the above solution can not exist either, because the action functional is not coercive in the above setting. One just needs to separate the two parallel planes where the $n$-body sub-systems lying further and further away from each other along the $z$-axis, until their distance becomes infinity. 

One way to get coercivity is by assuming the masses go above and below the $xy$-plane from time to time, then after a rotation of $\pi$ around the $x$-axis, the two $n$-body sub-systems will tangle with each other along the $z$-axis and force coercivity. From a dynamical point of view, this means there is a chance that the positions and the gravitational force between the sub-systems could be carefully balanced, so the original simple choreographies of each $n$-body system may be deformed, but still kept, and gives us a real spatial double choreography of the $2n$-body problem.}

At a first glance, such a delicate balance seems hard to achieve. However by considering a minimization problem with proper symmetric and topological constraints, we establish the existence of such spatial double choreographies when the corresponding simple choreography is the simplest one: the rotating $n$-gon. Furthermore we can more or less control how the two loops tangle each other by imposing different topological constraints, and as a result the number of such spatial double choreographies grows exponentially as $n$ goes to infinity. We believe the idea could work for more general simply choreographies, at least those from the family of linear chains. 

%method used in our proof can also be used to establish other families of spatial double choreographies like above, as least when the simple choreography is from the family of linear chains. This will be discussed in a future paper. 
Following the above approach, in this paper we show the existence of a family of spatial double choreographies having the common feature that each double choreography consists of two spatial loops (each without any self-intersection) identical to each other after a rotation of $\pi$ around the $x$-axis, and if we project it to the $xy$-plane, half of the masses circles around the origin clockwise along a loop, which is symmetric with respect to the $x$-axis and $y$-axis and does not have any self-intersection (like a slightly deformed circle), while the other half masses circle around counter-clockwise on the same loop, see Figure \ref{fig:even} and \ref{fig:odd} for illuminating pictures. Up to our knowledge such spatial double choreographies have not been found numerically or proven analytically before. 

\begin{figure}
  \centering
  \includegraphics[scale=0.75]{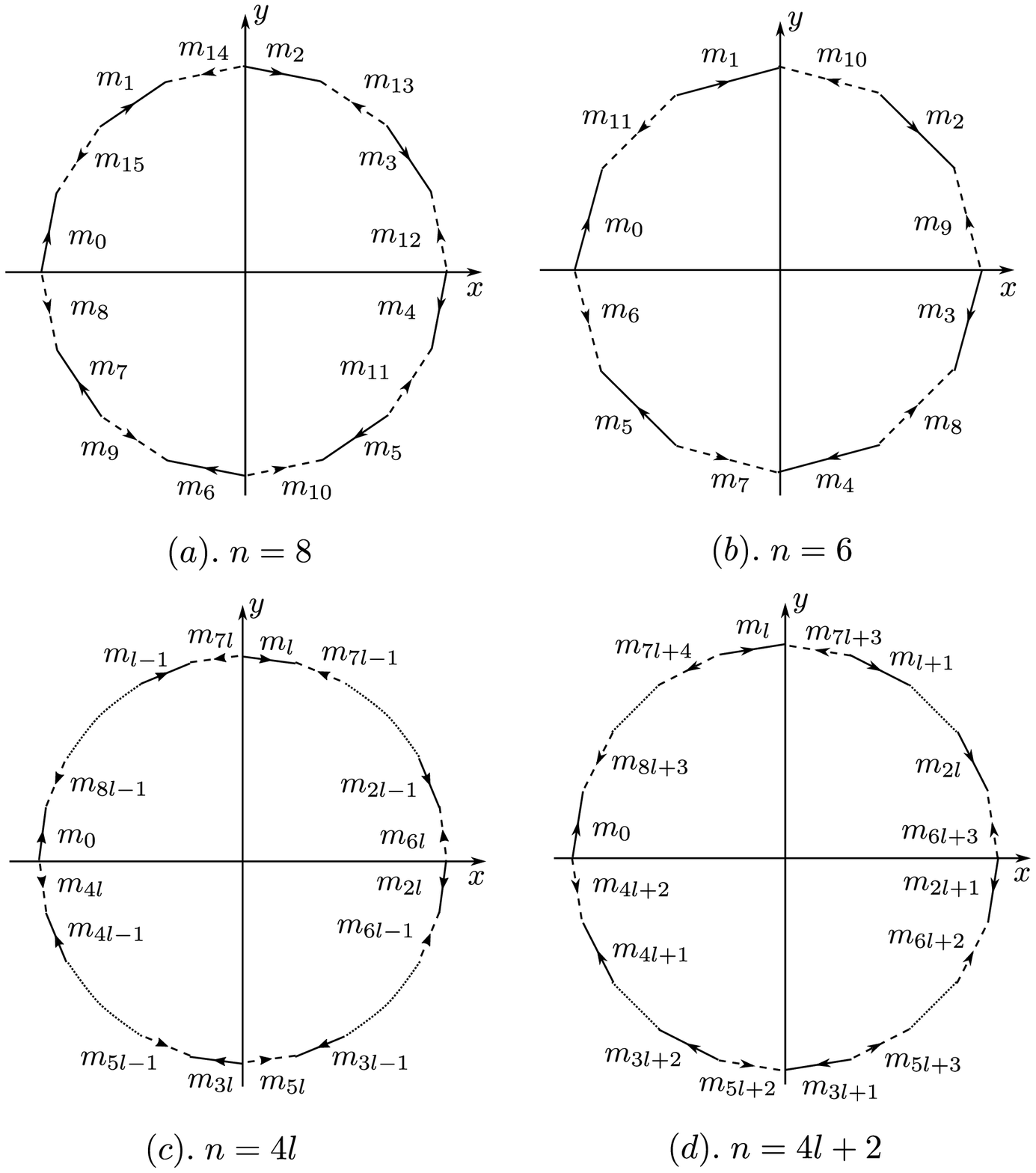}
  \caption{}
  \label{fig:even}
\end{figure}

\begin{figure}
  \centering
  \includegraphics[scale=0.75]{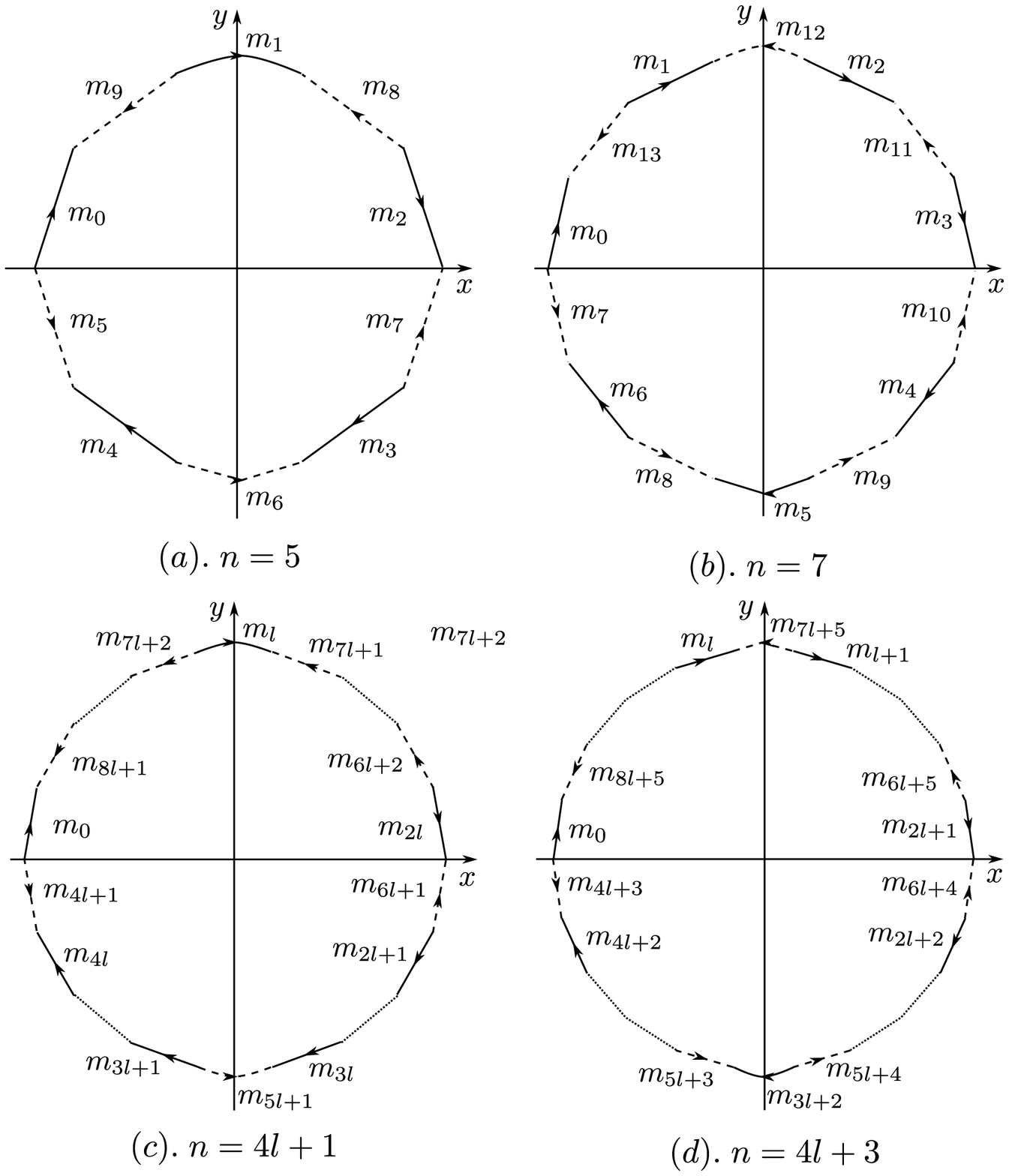}
  \caption{}
  \label{fig:odd}
\end{figure}

In the following, we assume $N=2n$ and $n \ge2$. Because of the homogeneity of the potential, if $q(t)$ is a solution of \eqref{eqn:nbody}, so is $\lmd^{-2/3}q(\lmd t)$ for any $\lmd >0$. To simplify notation, we set the time-period $T=n$ and correspondingly $\Lmd = H^1(\rr/n \zz, \rr^{6n})$. First we will introduce the proper symmetric constraints.   

Consider the finite group $G_n=D_n \times \zz_2 \times \zz_2$, where
$$ D_n = \langle g_1, g_2 |\; g_1^n= g_2^2 =1, (g_1 g_2)^2 =1 \rangle; $$ 
$$ \zz_2 \times \zz_2 = \langle h_1 | \; h_1^2 =1 \rangle \times \langle h_2 | \; h_2^2 =1 \rangle.$$
Let `Id' be the identity,  $\mf{R}_{xz}$ the reflection with respect to the $xz$-plane and $\mf{R}_x$ a rotation of $\pi$ around the $x$-axis ($\mf{R}$ with other sub-indices will be defined similarly). Furthermore $[\cdot]$ will denote the integer part of a number. With these notations, we define the action of $G_n$ as follows: 
$$ \uptau(g_1)t=t, \;\; \rho(g_1)= \text{Id}, \;\; \sg(g_1) =(0,1, \dots, n-1)(n, n+1, \dots, 2n-1);$$
$$ \uptau(g_2)t= 1-t, \;\; \rho(g_2)=\R_{xz}, \;\; \sg(g_2)= \prod_{i=0}^{[\frac{n-1}{2}]} \big((i, n-1-i)(n+i, 2n-1-i)\big);$$
$$ \uptau(h_1)t=t, \;\; \rho(h_1) = \R_x, \;\; \sg(h_1)= \prod_{i=0}^{n-1}(i, n+i);$$
the action of $h_2$ will be defined differently for even and odd $n$,\\
if $n$ is even,
$$ \uptau(h_2)t= t, \;\; \rho(h_2) = \R_z, \;\; \sg(h_2)= \prod_{i=0}^{\frac{n}{2}-1} \big((i, \frac{n}{2}+i)(n+i, \frac{3}{2}n+i)\big);$$
if $n$ is odd, 
$$ \uptau(h_2)t= \ey-t, \;\; \rho(h_2)= \R_{yz}, $$
$$ \sg(h_2) = \prod_{i=0}^{[\frac{n}{4}]}\big((i, [\frac{n}{2}]-i)(n+i, n+[\frac{n}{2}]-i)\big) \prod_{i=1}^{[\frac{n+1}{4}]}\big(([\frac{n}{2}]+i, n-i)(n+[\frac{n}{2}]+i, 2n-i)\big).$$

%\begin{rem}
%\label{rem:simple chore} If we only consider the first n masses, $m_i$, $i \in \{0, \dots, n-1 \}$ with the action of the subgroup $D_n$ defined as above, then it is just a spatial extension of the group action used in \cite{Y15c} to find the family of linear chains, where only the planar $n$-body problem were considered. As mentioned earlier, for the corresponding spatial problem, we don't know if a corresponding action minimizer will be planar or spatial. However due to the presence of the other $n$ masses and the symmetric constraints given by the action of $h_1$, if the masses always stay on the $xy$-plane, then there must be collision. Meanwhile as we will prove later, the action minimizers must be collision-free, so this can not happen. 

%Similarly the reason that we also need the symmetric constraint introduced by the action of $h_2$ is that together with the other actions, it ensures the corresponding collision-free minimizers can not be contained in the $xz$-plane or $yz$-plane, for details see Lemma \ref{lm:nondeg} and its proof, which will be given in Section \ref{sec:app}. 
%\end{rem}

For simplicity, we set $\Lmd^n = \Lmd^{G_n}$. Due to the symmetric constraints, the following equations hold for any $q \in \lmn$ and $t \in \rr$, 
\begin{align}
\label{eqn:g1} g_1 \; & \Rightarrow \; \begin{cases}
					q_i(t) &= q_0(t+i), \\
					q_{i+n}(t) & = q_n(t+i), \\
\end{cases}  \quad \forall i \in \{0, \dots, n-1\},\\
\label{eqn:h1} h_1 \; & \Rightarrow \; q_{i+n}(t) = \R_x q_i(t),  \quad \quad \quad \; \forall i \in \{0, \dots, n-1\}, \\
\label{eqn:g1g2} g_1, g_2 \; & \Rightarrow \; q_0(t) = \R_{xz} q_0(n-t), \\
\label{eqn:g1h2} g_1, h_2 \; & \Rightarrow \;  q_0(t) = \begin{cases}
                              \R_z q_0(n/2 +t), \; & \text{ if } n \text{ is even, } \\
                              \R_{yz}q_0(n/2 -t), \; & \text{ if } n \text{ is odd. }
                             \end{cases}
\end{align}

\begin{rem}  We briefly explain the implication of the above equations:
\begin{enumerate}
\item[(a).] by \eqref{eqn:g1}, $m_i$ moves along the loop $q_0(\rr/n\zz),$ if $i \in \{0, \dots, n-1\}$, and along the loop $q_n(\rr/n\zz),$ if $i \in \{n, \dots, 2n-1\}$;
\item[(c).] by \eqref{eqn:h1}, $q_0(\rr/n\zz)$ and $q_n(\rr/n\zz)$ are identical to each other after a rotation of $\pi$ around the $x$-axis;
\item[(b).] by \eqref{eqn:g1g2} and \eqref{eqn:g1h2}, $q_0(\rr/n\zz)$ is symmetric with respect to $xz$-plane, $yz$-plane and the origin, in particular
\begin{equation}
 \label{eqn:q0symmetry} y_0(0) = x_0(n/4) = y_0(n/2) = x_0(3n/4)=0.
\end{equation}
\end{enumerate}
\end{rem}

When $n$ is even, $[0,1/2]$ is a \emph{fundamental domain} of the $G_n$-equivariant loops (see \cite{FT04}), i.e., once $q(t)$, $t \in [0,1/2]$ is defined, the entire $G_n$-equivariant loop $q(t), t \in \rr/n \zz$ is uniquely determined through symmetries. When $n$ is odd, $[0, 1/4]$ is a fundamental domain. Because of this, there is a one-to-one map between $\lmn$ and the following two sets
\begin{align*}
\{q(t), t \in [0,1/2]| \; q \in \lmn\}, & \text{ if } n \text{ is even};\\
\{q(t), t \in [0,1/4]| \; q \in \lmn \}, & \text{ if } n \text{ is odd}. 
\end{align*}
By abuse of notation, these sets of paths will also be denoted by $\lmn$. We point out that $q \in \Lmd^n$ is also uniquely determined through symmetries, once $q_0(t), t \in [0, n/4]$ is given. This point of view will also be useful later. 

%Our first problem is that the action functional $\A$ does not have a minimizer in $\lmn$, as it is not coercive in $\lmn$. For example, choose two rotating $n$-gon's satisfying the symmetric constraints given above with with one always above the $xy$-plane and the other always below, then we separate them further and further away along the $z$-axis until the distance between them becoming infinity.
As we mentioned before, the action functional $\A$ is not coercive in $\lmn$. To force coercivity, we need the two $n$-body sub-systems to tangle with each other along the $z$-axis and this can be done by adding topological constraints into the problem. However this makes it more difficult to rule out collision in the action minimizers. To overcome this difficulty, an additional type of constraints called \emph{monotone constraints} were introduced by the author in \cite{Y15c}. In the following, we will first introduce the monotone constraints, as it will be easier to impose the topological constraints afterwards.

\begin{dfn}
\label{dfn:smc} We set $ \Lmd^n_< = \{ q \in \Lmd^n | \; q \text{ is \textbf{strictly $x$ and $y$-monotone}} \}.$\\
$q \in \Lmd^n$ is \textbf{strictly $x$-monotone}, if $ x_0(t_1) < x_0(t_2), \;\; \forall 0 \le t_1 < t_2 \le n/4;$\\
$q \in \Lmd^n$ is \textbf{strictly $y$-monotone}, if $ y_0(t_1) < y_0(t_2), \;\; \forall 0 \le t_1 < t_2 \le n/4.$
\end{dfn}
Let $\mf{Q}_2$ denote the second quadrant of $\rr^3$, i.e.,
$$ \mf{Q}_2 = \{ (x,y,z) \in \rr^3| \; x \le 0, \; y \ge 0 \},$$
and the other three quadrants are defined as follows
$$ \mf{Q}_1 = \mf{R}_{yz} \mf{Q}_2, \;\; \mf{Q}_3 = \mf{R}_{xz} \mf{Q}_2, \;\; \mf{Q}_4 = \mf{R}_{xz} \mf{Q}_1.$$
If $q \in \lmn_<$, being strictly $x$ and $y$-monotone implies $q_i(t) \in \mf{W}_i(q)$, for any $i \in \N$ and $t \in [0, 1/2]$, where 
\begin{equation}
\label{eqn:well1} \mf{W}_i(q) = \{ (x,y,z) \in \rr^3| \big(x-x_i(0)\big)\big(x-x_i(\ey)\big) \le 0, \; \big(y-y_i(0)\big)\big(y-y_i(\ey)\big) \le 0 \},
\end{equation}
except for $i = [n/4]$ or $n+[n/4]$, when $n$ is odd, in which case
\begin{equation}
\label{eqn:well2} \mf{W}_i(q) = \{ (x,y,z)\in \rr^3| \big(x-x_i(0)\big)\big(x-x_i(\ey)\big) \le 0, \;\big(y-y_i(0)\big)\big(y-y_i(\frac{1}{4})\big) \le 0 \},
\end{equation}
because in the later cases, the mass $m_i$ moves across the $yz$-plane at $t =1/4$ and changes direction along the $y$-axis. 

Geometrically each $\mf{W}_i(q)$ looks like an infinite well (in the shape of square) with a non-empty interior $\mathring{\mf{W}}_i(q) \ne \emptyset$. Furthermore $\mathring{\mf{W}}_{i_1}(q) \cap \mathring{\mf{W}}_{i_2}(q) = \emptyset$, for any $i_1 \ne i_2.$ As a result, $q(t)$ is collision-free for any $t \in (0, 1/2)$ and the only possible collisions are binary ones at $t=0$ or $1/2$. Moreover due to the symmetric constraints, a binary collision can happen if and only if
$$ z_0(i/2) = 0, \text{ for some } i \in \N \text{ or equivalently } i \in \{0, \dots, [n/2] \}.$$

The symmetric and monotone constraints introduced above tell us quite a lot of the geometrical information of the loop $q_0(t), t \in \rr/n \zz$:
\begin{enumerate}
 \item[(i).] $q_0(t) \in \mathring{\mf{Q}}_2$, $\forall t \in (0, n/4),$ where $\mathring{\mf{Q}}_2$ is the interior of $\mf{Q}_2$; 
 \item[(ii).] As a loop $q_0$ is symmetric with respect to the $xz$ and $yz$-plane;
 \item[(iii).] As a loop $q_0$ does not have any self-intersection. 
  \end{enumerate} 

Pictures in Figure \ref{fig:even} and \ref{fig:odd} indicate the motion of the masses from $t=0$ to $t=1/2$ after projecting to the $xy$-plane. As we can see from the pictures, at $t=0$ or $1/2$, to avoid a binary collisions between a corresponding pair of masses, we need one of them to go above the $xy$-plane and the other to go below. This gives us an idea of how to impose the proper topological constraints.

Let $\hlm=\{ q \in \lmn|\; q(t) \notin \Delta, \; \forall t \in \rr/n \zz \}$ be the set of all collision-free $G_n$-equivariant loops, and $\hlm_{<} = \lmn_{<} \cap \hlm$. By the above explanation
\begin{equation}
\label{eqn:coll-free}
\hlm_< = \{ q \in \Lmd^n_<: z_0(i/2) \ne 0, \; \forall i =0, \dots, [n/2] \}.
\end{equation}

\begin{dfn}
\label{dfn:tc} 
Let $\omn$ be a subset of $\{1,-1\}^{[\frac{n}{2}]+1}$ defined as 
$$ \Om_{[n/2]} = \big\{ \om=(\om_i)_{i=0}^{[\frac{n}{2}]}| \; \om_{i_1} \ne \om_{i_2}, \text{ for some } 0 \le i_1 < i_2 \le [n/2] \big\}.$$
For any $\om \in \omn$, we say $ q \in \hlm_{<}$ satisfies the \textbf{$\om$-topological constraints}, if the following hold
\begin{equation}
\label{eqn:tc} z_0(i/2) = \om_i |z_0(i/2)|, \;\; \forall i =0, \dots, [n/2],
\end{equation}
and set 
$$  \hlm_{<, \om}= \{ q \in \hlm_< | \; q \text{ satisfies the } \om\text{-topological constraints} \}. $$
\end{dfn}

\begin{rem}
The values of $\om_i$ in $\om$ determines whether $m_0$ is above or below the $xy$-plane at the moments $t \in \{i/2| \; 0 \le i \le [n/2] \}$. By choosing different $\om$, we determine how the two loops in the double choreography tangle each other along the vertical direction. 
\end{rem}

Clearly $\hlm_{<, \om}$ is not a closed set. To be able to use the standard argument from calculus of variation, we let $\lmn_{\le, \om}$ be the weak closure of $\hlm_{<, \om}$ in $\lmn$ (an alternative definition of $\lmn_{\le, \om}$ will be given in Definition \ref{dfn:lmn}). The following theorem is the main result of our paper.
\begin{thm}
\label{thm:main} For any $\om \in \Om_{[n/2]}$ satisfying the following condition 
\begin{equation}
\label{eqn:odd om} \text{ when } n \text{ is odd, there exist } \{i_1 \ne i_2\} \subset [1, [n/2]] \cap \zz, \text{ such that } \om_{i_1} \ne \om_{i_2}.
\end{equation}
There exits at least one collision-free $\qo \in \hat{\Lmd}^n_{<, \om}$, which is a minimizer of the action functional $\A_n$ in $\lmn_{\le, \om}$. Furthermore $\qo$ is spatial double choreography of the $2n$-body problem satisfying 
\begin{align}
\label{eq: x dot >0} \xd_0^{\om}(0)=0, \; \text{ and } \; \xd_0^{\om}(t) >0, \; & \forall t \in (0, n/4], \\
\label{eq: y dot >0} \yd_0^{\om}(n/4)=0, \; \text{ and } \; \yd_0^{\om}(t) >0, \; & \forall t \in [0, n/4). 
\end{align}
\end{thm}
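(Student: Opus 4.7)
The plan is to implement the direct method of the calculus of variations on the weakly closed set $\lmn_{\le, \om}$, then show the minimizer lies in the open stratum $\hlm_{<, \om}$, and finally upgrade the weak monotonicity inherited from the constraint set to the strict monotonicity \eqref{eq: x dot >0}--\eqref{eq: y dot >0}.

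For existence, $\A_n$ is weakly lower semi-continuous on $H^1$ in the standard way, and $\lmn_{\le, \om}$ is weakly closed by construction. For coercivity I would combine three ingredients: the monotone constraints, which confine $q_0$ to the second quadrant $\mf{Q}_2$ on the fundamental domain and bound its spatial extent by the endpoint values $q_0(0)$ and $q_0(n/4)$; the $G_n$-symmetries together with \eqref{eqn:q0symmetry}, which place the center of mass at the origin and pin $y_0(0)=x_0(n/4)=0$; and the fact that $\om \in \omn$ necessarily has both signs, which forces $q_0(\rr/n\zz)$ to cross the $xy$-plane and hence produces a nontrivial linking with its rotated copy $q_n(\rr/n\zz) = \R_x q_0(\rr/n\zz)$. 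The linking prevents drift along the $z$-axis, while the monotonicity and the endpoint symmetries control motion in the $xy$-plane, so any minimizing sequence is bounded in $H^1$; passing to a weakly convergent subsequence yields a minimizer $\qo \in \lmn_{\le, \om}$.

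The crux of the proof, which I expect to be the main obstacle, is showing that $\qo$ actually lies in the open stratum $\hlm_{<, \om}$. By Definition \ref{dfn:smc} and \eqref{eqn:coll-free}, the only candidate collisions are binary and occur at times $t=i/2$ with $z_0(i/2)=0$ for some $0 \le i \le [n/2]$. Since Marchal's averaging method is unavailable in the presence of topological and monotone constraints, I would follow the local blow-up and deformation strategy developed in \cite{Y15c}: first invoke Sundman--Sperling-type asymptotics to describe the binary collision, then construct a one-parameter family of local perturbations supported in a small neighborhood of the collision time that respects both the $G_n$-action and the sign pattern prescribed by $\om$, and finally compare the actions to show that the collision path is not a minimizer. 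The extra hypothesis \eqref{eqn:odd om} for odd $n$ enters precisely here: it guarantees enough freedom in the sign pattern on $\{1,\dots,[n/2]\}$ for the local deformation to be carried out compatibly with the twisted action of $h_2$ on the reduced fundamental domain $[0, 1/4]$.

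Once $\qo$ is collision-free, it is a classical solution of \eqref{eqn:nbody}, and the boundary identities $\xd_0^{\om}(0)=0$ and $\yd_0^{\om}(n/4)=0$ are immediate from the reflection symmetries (forcing $q_0$ to hit the $yz$- and $xz$-planes orthogonally at these times). To upgrade weak monotonicity to the strict inequalities in \eqref{eq: x dot >0}--\eqref{eq: y dot >0}, I would argue that if $\xd_0^{\om}$ (respectively $\yd_0^{\om}$) vanished on a positive-measure subset of $(0,n/4]$ (respectively $[0,n/4)$), the Euler--Lagrange equation would force the corresponding component of the gravitational force on $m_0$ to vanish there; combined with the symmetric geometry dictated by \eqref{eqn:g1}--\eqref{eqn:g1h2} and collision-freeness, this yields a contradiction. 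The remaining geometric assertions, namely that $\qo$ is a spatial double choreography and that the two loops are related by $\R_x$ and each symmetric with respect to the $xz$- and $yz$-planes, follow directly from \eqref{eqn:g1}, \eqref{eqn:h1}, \eqref{eqn:g1g2} and \eqref{eqn:g1h2}.
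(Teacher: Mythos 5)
There is a genuine gap, and it sits exactly where you predicted the crux would be. You describe the collision removal as ``local perturbations supported in a small neighborhood of the collision time'' in the spirit of the blow-up technique. But a purely local deformation cannot work in all cases: when the colliding pair approaches along the vertical direction ($\phi_j^+ = 0$ or $\pi$ in the paper's notation), Gordon's result on the Kepler problem shows no local deformation respecting the sign pattern can lower the action, and this is precisely the case that is generic for the degenerate (planar) competitors. The paper handles it with a \emph{global} deformation (Lemma \ref{lm:dfm2}) that shifts the two colliding bodies apart horizontally and gains potential energy over a macroscopic time interval, at a kinetic cost of order $\ep_2^{8/3}$; this step requires the configuration to be $x$- or $y$-separated with $x_j(T)<x_k(T)$, which in turn comes from \emph{strict} $x$- and $y$-monotonicity of the minimizer. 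Your proposal never establishes strict monotonicity: the minimizer in $\lmn_{\le,\om}$ is only weakly monotone a priori, and one must first exclude the planar degenerate minimizers (Lemma \ref{lm:nondeg}, proved via Lemmas \ref{lm:B1}--\ref{lm:B3}) and then upgrade via Proposition \ref{prop:smc}. This is also where hypothesis \eqref{eqn:odd om} actually enters --- it rules out, for odd $n$, a minimizer lying entirely in the $xz$-plane --- not, as you suggest, to give ``freedom in the sign pattern'' for the local deformation near a collision.

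The final step is also circular as written. You invoke the Euler--Lagrange equation to show $\xd_0^{\om}$ and $\yd_0^{\om}$ cannot vanish, but because of the monotone constraints the minimizer is not known to satisfy \eqref{eqn:nbody} even after collisions are excluded; it could a priori be an ``obstacle-type'' trajectory gliding along the constraint boundary, and the free Euler--Lagrange equation is only available once the constraints are shown to be inactive --- which is exactly the content of \eqref{eq: x dot >0}--\eqref{eq: y dot >0}. Moreover, ruling out vanishing of $\xd_0^{\om}$ on a set of positive measure does not prove the pointwise statement $\xd_0^{\om}(t)>0$ for every $t\in(0,n/4]$. The paper instead argues by a direct comparison (Lemma \ref{lm: x y dot >0}): assuming $\xd_0^{\om}(t_0)=0$ at a single $t_0$, one constructs an explicit competitor in $\lmn_{\le,\om}$ whose kinetic cost is $O(\ep^3)$ while the potential gain is of order $\ep^2$, contradicting minimality. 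Your skeleton (direct method, deformation near collisions, symmetry giving the geometric description) matches the paper, but without the strict-monotonicity/nondegeneracy step, the global deformation for the vertical-collision case, and a non-circular argument for the strict velocity inequalities, the proof does not close.
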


\begin{rem}
Since $\qo \in \hat{\Lmd}^n_{<, \om}$, it satisfies \eqref{eqn:g1}, \eqref{eqn:h1}, \eqref{eqn:g1g2} and \eqref{eqn:g1h2}. As a result, it is a double choreography with the two loops symmetric with respect to the $xz$-plane and $yz$-plane, and identical to each other after a rotation of $\pi$ around the $x$-axis; being strictly $x$ and $y$-monotone means it must be a spatial solution and each of the two loops can not have any self-intersection; the $\om$-topological constraints guarantee the set of intersection points between the two loops is non-empty and the symmetric constraints imply they must be contained in the $xy$-plane.
\end{rem}

\begin{rem}
 As we can see, the number of $\om \in \Om_{[n/2]}$ satisfying condition \eqref{eqn:odd om} grows exponentially as $n$ goes to infinity. Therefore so is the number of different double choreographies obtained by Theorem \ref{thm:main} (two double choreographies are different, if we cannot identify one to another by a time re-parameterization or space orthogonal transformation).  
\end{rem}

\begin{rem}
We point out that when $n=3$, there is no $\om \in \Om_{[3/2]}$ satisfying condition \eqref{eqn:odd om}, as in this case $[1, [3/2]] \cap \zz = \{1\}$. Condition \eqref{eqn:odd om} is needed, because for our approach to work the degenerate cases that the minimizer is entirely contained in the $xz$-plane or the $yz$-plane need to be excluded, see Lemma \ref{lm:dfm2}. When $n$ is odd, we can not rule out the possibility that the minimizer is entirely contained in the $xz$-plane without the condition given in \eqref{eqn:odd om}, see Lemma \ref{lm:nondeg} and \ref{lm:B3}.  
\end{rem}

It will be interesting to compare our result with those obtained in \cite{FGN11} and \cite{WZ16}, where the authors also considered minimization problems with mixed symmetric and topological constraints, and proved the existence of certain spatial double choreographies as collision-free minimizers similar to ours. When $n=2$, the solution obtained in Theorem \ref{thm:main} are likely to be the same as those obtained in \cite[Theorem 2.1]{FGN11} and \cite[Theorem 3]{WZ16} with the proper topological constraints.

One of the key ideas in the above two papers that allows them to handle the topological constraints is by imposing \emph{strong symmetric constraints}. By \emph{strong symmetric constraints}, we mean at any moment once the position of one mass is known, then the positions of all the other masses are uniquely determined by symmetries. In some sense, this makes the problem equivalent to the motion of one point mass in some singular potential field. 

While the \emph{strong symmetric constraints} helped simplify the problem, it also restricted the possible choices of topological constraints. For example in our terminology, roughly speaking the solutions found in \cite{WZ16} all correspond to $\om$-topological constraints with $\om_i \ne \om_{i+1},$ for each $i$. 

From this point of view our symmetric constraints is much weaker, as at any moment (except the boundaries of the fundamental domains), we need to know the positions of least $n/2$ masses, when $n$ is even, or $n$ masses, when $n$ is odd, to be able to determine the positions of all the other masses. As a result the idea used in \cite{FGN11} and \cite{WZ16} to rule out collisions will not work in our setting. 

The technical lemmas we use to rule out collisions are generalizations of ideas introduced in \cite{Y15c}. However instead of the monotone constraints that required in \cite{Y15c}, here we prove them under much weaker condition (see Definition \ref{dfn:sepa}) and more importantly these lemmas will be proven for arbitrary choice of masses, which gives them the potential to be applied in more general problems, particularly problems not all masses are equal. 

\textbf{Notations:} The following notations will be applied through the paper:
\begin{itemize}
\item $l$ and $\ell$ are used as two different letter;
\item $\e_1, \e_2$ and $\e_3$ represent the three unit vectors $(1,0,0), (0,1,0)$ and $(0,0,1)$ in $\rr^3$;
\item if $q \in H^1([T_1, T_2], \rr^{3n})$, then $q_i \in H^1([T_1, T_2], \rr^3)$ is the corresponding path of $m_i$ and $x_i, y_i, z_i \in H^1([T_1, T_2], \rr)$ are the projection of $q_i$ to the $x$, $y$ and $z$-axis; 
\item if instead of $q$, a path is denoted by $q^{\om}, q^{\ep}, q^{\ep_1}, q^{\ep_2}, \qh, \qt$ or $\qt^{\ep_1}$, then the corresponding changes will be made on the notations $q_i, x_i, y_i, z_i$. 
\item Given any two non-negative integers $i_0, i_1$: 
$$ \{i_0, \dots, i_1 \} := \begin{cases}
\{i \in \zz: \; i_0 \le i \le i_1\}, \; & \text{ if } i_0 \le i_1, \\
\emptyset, \; & \text{ if } i_0 > i_1;
\end{cases} $$
\item $C$ and $C_i$, $ i \in \zz^+ $, represent different positive constants.
\end{itemize}

\section{Deformation Lemmas Involving Binary Collision} \label{sec:lemma}

The purpose of this section is to prove several deformation lemmas that can be used to rule out binary collisions in an action minimizer under certain symmetric and topological constraints. In this section instead of $N =2n$, we only assume $N \ge 3$. Furthermore the condition that all masses must be equal will also be dropped (one can have an arbitrary choice of positive masses). We believe results from this section may be used to rule out binary collisions in problems with topological constraints besides the one we are considering here. 

For the rest of this section, let $q:[0,T] \to \rr^{3N}$ be a collision solution, satisfying the following conditions:
\begin{enumerate}
 \item [(i).] for any $t \in (0, T)$, $q(t)$ is collision-free and satisfies equation \eqref{eqn:nbody};
 \item [(ii).] there is a isolated binary collision between $m_j$ and $m_k$ at $t=0$, i.e., 
 $$ q_{j}(0)= q_k(0) \ne q_i(0), \; \forall i \in \N \setminus \{j,k\}. $$ 
\end{enumerate}
Without loss of generality, let's assume $q_j(0)=q_k(0)=0$. Notice that there may be other isolated collisions at $t=0$, besides the one between $m_j$ and $m_k$. 

Let $q_c(t)= \frac{m_jq_j(t) + m_kq_k(t)}{m_j+m_k}$ be the center of mass of $m_j$ and $m_k$, and
\begin{equation}
 \label{eqn:rel} \mf{q}_i(t) = (\xf_i(t), \yf_i(t), \zf_i(t)) = q_i(t) - q_c(t), \; \forall i \in \{j, k\},
\end{equation}
the relative position of $m_i$ with respect to $q_c(t)$. Put $\mf{q}_i$ in spherical coordinates $(r, \phi, \tht)$ with $r \in [0, +\infty)$, $\phi \in [0, \pi]$ and $\tht \in [0, 2\pi)$, we get
\begin{equation} \label{eqn:sph co}
\xf_i  = r_i \sin \phi_i \cos \tht_i, \;\; \yf_i = r_i \sin \phi_i \sin \tht_i, \;\; \zf_i = r_i \cos \phi_i.
\end{equation}
Notice that $m_j\mf{q}_j(t) +m_k\mf{q}_k(t) =0$, and this implies
$$ m_k r_k(t) = m_j r_j(t), \;\; \phi_k(t) = \pi - \phi_j(t), \;\; \tht_k(t) = \pi + \tht_j(t). $$

The following asymptotic estimates are the key to our study of isolated binary collisions.    
\begin{prop}
\label{prop:sundman} For $i \in \{j, k\}$ and $t>0$ small enough, 
$$ r_i(t) = C_1t^{\frac{2}{3}}+o(t^{\frac{2}{3}}), \;\; \dot{r}_i(t) = C_2 t^{-\frac{1}{3}} +o(t^{-\frac{1}{3}}).$$
\end{prop}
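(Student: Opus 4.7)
The plan is to reduce the proposition to a perturbed radial Kepler collapse and then integrate. Setting $\mf{r}(t):=q_j(t)-q_k(t)$ and the reduced mass $\mu:=m_jm_k/(m_j+m_k)$, I would subtract the two equations in (\ref{eqn:nbody}) to obtain
$$\mu\ddot{\mf{r}}(t) = -\frac{m_jm_k}{|\mf{r}(t)|^3}\mf{r}(t)+F(t),$$
where $F(t)$ collects the gravitational contributions from the other $N-2$ masses. Because the collision at $t=0$ is isolated, for every $i\notin\{j,k\}$ the distances $|q_i(t)-q_j(t)|$ and $|q_i(t)-q_k(t)|$ remain bounded below by a positive constant on a small interval $(0,\delta]$, so $F$ extends to a bounded continuous vector field on $[0,\delta]$. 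Since $r_j(t)=\tfrac{m_k}{m_j+m_k}|\mf{r}(t)|$ and $r_k(t)=\tfrac{m_j}{m_j+m_k}|\mf{r}(t)|$, it suffices to prove the stated asymptotics for $r(t):=|\mf{r}(t)|$.

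The key step is to establish the two Sundman-type facts, each obtained by integrating the perturbed equation against a suitable quantity. First, the two-body energy $E_{jk}(t):=\ey\mu|\dot{\mf{r}}|^2-m_jm_k/r$ satisfies $\dot E_{jk}=F\cdot\dot{\mf{r}}$; combining $|F|\le C$ with the expression of $|\dot{\mf{r}}|$ in terms of $E_{jk}$ and $r$, a Gronwall-type argument gives $|E_{jk}(t)|\le C$ on $(0,\delta]$. Second, the pair angular momentum $\mf{L}:=\mu\,\mf{r}\times\dot{\mf{r}}$ satisfies $\dot{\mf{L}}=\mf{r}\times F$, so $\mf{L}(t)$ has a finite limit as $t\to 0^+$; if that limit were nonzero, the centrifugal term $|\mf{L}|^2/(2\mu r^2)$ would dominate $m_jm_k/r$ and force $E_{jk}\to-\infty$, contradicting the energy bound. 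Hence $\mf{L}(t)\to 0$, and the motion near $t=0$ reduces to an essentially radial collapse.

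Decomposing $|\dot{\mf{r}}|^2=\dot r^2+r^{-2}|\mf{r}\times\dot{\mf{r}}|^2$, the bounded-energy identity rearranges to $r\dot r^2=2(m_j+m_k)+o(1)$ as $t\to 0^+$. Since $r\to 0$ with $r>0$ on $(0,\delta]$, we must have $\dot r<0$ near the collision, so $\sqrt r\,dr=-\sqrt{2(m_j+m_k)}\,(1+o(1))\,dt$; integrating from $0$ to $t$ yields $r(t)^{3/2}=\tfrac32\sqrt{2(m_j+m_k)}\,t\,(1+o(1))$, and hence
$$r(t)=\Bigl(\tfrac{9(m_j+m_k)}{2}\Bigr)^{1/3}t^{2/3}+o(t^{2/3}),\qquad \dot r(t)=\tfrac23\Bigl(\tfrac{9(m_j+m_k)}{2}\Bigr)^{1/3}t^{-1/3}+o(t^{-1/3}).$$
Rescaling by $m_k/(m_j+m_k)$ or $m_j/(m_j+m_k)$ recovers the stated asymptotics for $r_j$ and $r_k$ with the explicit constants $C_1,C_2$.

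The main technical obstacle is making the reduction to radial collapse quantitative: both the energy bound and the vanishing of $\mf{L}$ must come with rates strong enough to ensure $|\mf{r}\times\dot{\mf{r}}|^2=o(r)$, so that the centrifugal correction really disappears from the leading order of $r\dot r^2$. This is the classical Sundman estimate for isolated binary collisions, and once it is in place the final integration is an elementary ODE computation.
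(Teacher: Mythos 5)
You flag the crucial step yourself and then leave it open, and that is a genuine gap. The whole reduction to a radial collapse rests on showing $|\mf{r}\times\dot{\mf{r}}|^2=o(r)$, but nothing in your argument delivers it: from $|\dot{\mathfrak{L}}|=|\mu\,\mf{r}\times G|\le Cr$ you only get $|\mathfrak{L}(t)|\le C\int_0^t r(s)\,ds$, and without a \emph{lower} bound on $r(t)$ this cannot be converted into $|\mathfrak{L}|^2/r\to 0$; for all your estimates show, $r$ could vanish faster than any power of $t$. Two further slips: the ``Gronwall-type'' energy bound is circular as stated, since $|\dot E_{jk}|\le C|\dot{\mf{r}}|\le C\big(1+|E_{jk}|^{1/2}+r^{-1/2}\big)$ and the integrability of $r^{-1/2}$ at $t=0$ is part of what is being proved --- one should instead use that $\dot{\mf{r}}\in L^2$ near $t=0$ (finite action), so $\int_0^\delta|\dot{\mf{r}}|\,dt<\infty$ and $E_{jk}$ is bounded by direct integration of $\dot E_{jk}=F\cdot\dot{\mf{r}}$; and the sign of $\dot r$ is backwards: the collision is at $t=0$ and the solution lives on $t>0$, so near $t=0^+$ the pair is ejecting and $\dot r>0$ (your final formulas tacitly use this; likewise a nonzero limit of $\mathfrak{L}$ forces $E_{jk}\to+\infty$, not $-\infty$, though either way it contradicts boundedness).

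The missing step is exactly where the classical Sundman machinery enters, and it can be filled with the pair moment of inertia $I=\mu r^2$. The energy bound gives $\dot r^2\le C/r$ for $t$ small, hence $\big|\frac{d}{dt}r^{3/2}\big|\le C$ and, integrating from the collision, $r(t)\le Ct^{2/3}$; consequently $|\mathfrak{L}(t)|\le Ct^{5/3}$. The Lagrange--Jacobi identity $\ddot I=4E_{jk}+2m_jm_k/r+2\,\mf{r}\cdot F$ yields $\ddot I\ge c/r>0$ near $t=0$; since $|\dot I|\le 2\mu r|\dot r|\le Cr^{1/2}\to0$, one gets $\dot I>0$ for small $t>0$, and multiplying $\ddot I\ge c\,I^{-1/2}$ by $\dot I$ and integrating from $t=0$ gives $\dot I\ge c\,I^{1/4}$, hence $r(t)\ge ct^{2/3}$. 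With the two-sided bound, $|\mathfrak{L}|^2/r\le Ct^{8/3}\to0$, so $r\dot r^2\to 2(m_j+m_k)$ and your final integration (with $\dot r>0$) produces exactly the stated asymptotics and constants. For comparison, the paper does not prove Proposition \ref{prop:sundman} at all but cites \cite{FT04}, where the estimate is obtained by precisely this kind of moment-of-inertia argument for cluster collisions; so your perturbed-Kepler route is a legitimate self-contained alternative once the above lower bound is supplied.
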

This is the well-known Sundman's estimate, for a proof see \cite{FT04}.
\begin{prop} \label{prop:angle}
For $i \in \{j, k\}$, there exist $\phi^+_i \in [0, \pi]$ and $\tht^+_i \in \rr$ satisfying
\begin{enumerate}
\item[(a).] $\lim_{t \to 0^+} \phi_i(t) = \phi_i^+, \; \lim_{t \to 0^+} \tht_i(t) = \tht_i^+$,
\item[(b).] $\lim_{t \to 0^+} \dot{\phi}_i(t) = \lim_{t \to 0^+} \dot{\tht}_i(t) = 0$,
\item[(c).] $\phi^+_k = \pi - \phi^+_j, \; \tht_k^+ = \pi + \tht^+_j.$
\end{enumerate}
\end{prop}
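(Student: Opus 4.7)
The plan is to reduce the problem to the relative vector $\qf(t) := q_j(t) - q_k(t)$. From the definition of $\qf_i$ one has $\qf_j = \frac{m_k}{m_j+m_k}\qf$ and $\qf_k = -\frac{m_j}{m_j+m_k}\qf$, so $\qf_j$ and $\qf$ share the same spherical angles $(\phi_j,\tht_j)$, while $(\phi_k,\tht_k)=(\pi-\phi_j,\pi+\tht_j)$ throughout $(0,T]$ by the identities stated just before the proposition. It therefore suffices to show that $\qf(t)/|\qf(t)|$ converges on the unit sphere as $t\to 0^+$ and that its spherical angular velocities vanish; parts (a), (b), and (c) then follow, the last by passing to the limit.

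Subtracting \eqref{eqn:nbody} for $q_j$ and $q_k$ and isolating the singular mutual term yields
\begin{equation*}
\ddot{\qf}(t) = -\frac{m_j+m_k}{|\qf(t)|^3}\,\qf(t) + F(t),
\end{equation*}
where $F(t)$ collects the forces from the remaining masses. Since the binary collision at $t=0$ is isolated, for small $t>0$ the positions $q_l(t)$ with $l\notin\{j,k\}$ stay uniformly away from $q_c(t)\to 0$, so $F$ is bounded on a right neighborhood of $0$. Set $\mathbf L := \qf\times\dot{\qf}$ and $E := \tfrac{1}{2}|\dot{\qf}|^2 - (m_j+m_k)/|\qf|$. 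Then $\dot{\mathbf L} = \qf\times F$ and $\dot E = \dot{\qf}\cdot F$, and Proposition \ref{prop:sundman} gives $|\dot{\mathbf L}| = O(t^{2/3})$ and $|\dot E| = O(t^{-1/3})$; both are integrable at $t=0$, so $\mathbf L(t)\to\mathbf L^+$ and $E(t)\to E^+$ are finite.

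The key step is the identity
\begin{equation*}
|\mathbf L|^2 = |\qf|^2|\dot{\qf}|^2 - (\qf\cdot\dot{\qf})^2 = 2r^2 E + 2r(m_j+m_k) - (r\dot r)^2,
\end{equation*}
where $r := |\qf|$, obtained from $|\dot{\qf}|^2 = 2E + 2(m_j+m_k)/r$ and $\qf\cdot\dot{\qf} = r\dot r$. By Proposition \ref{prop:sundman}, $r \to 0$ and $r\dot r = O(t^{1/3})\to 0$, while $E\to E^+$ is finite; therefore $|\mathbf L|^2\to 0$, so $\mathbf L^+ = 0$. Integrating $|\dot{\mathbf L}| = O(t^{2/3})$ from $0$ to $t$ sharpens this to $|\mathbf L(t)| = O(t^{5/3})$, whence $\bigl|\tfrac{d}{dt}(\qf/|\qf|)\bigr| = |\mathbf L|/r^2 = O(t^{1/3})$ is integrable. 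Hence $\qf/|\qf|$ converges to some unit vector whose spherical coordinates $(\phi^+_j,\tht^+_j)$ give (a), the identity $\phd_j^2 + \sin^2\phi_j\,\thd_j^2 = |\mathbf L|^2/r^4 = O(t^{2/3})\to 0$ gives (b), and (c) is then immediate.

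The main technical hurdle is the passage $\mathbf L^+ = 0$: the two separate integrability arguments only yield finite limits for $\mathbf L$ and $E$, and closing the gap requires the algebraic identity above together with the consequence $r\dot r\to 0$ of Sundman's asymptotics, which encodes the cancellation between the kinetic and potential singularities at the collision. A minor subtlety is the degenerate case $\phi^+_j \in\{0,\pi\}$, where spherical coordinates become singular at the pole: there $\tht^+_j$ is not uniquely pinned down by the geometric limit and may be assigned arbitrarily, with (b) for $\tht_j$ understood in the chart-invariant form $\sin\phi_j\,\thd_j\to 0$.
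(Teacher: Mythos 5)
The paper never proves Proposition \ref{prop:angle}: it cites \cite[Proposition 4.2]{Y15b} for the planar case and asserts the spatial case is analogous. Your argument is therefore an independent, self-contained route, and in substance it is sound: reducing to the relative vector $\qf=q_j-q_k$, writing the perturbed Kepler equation with bounded $F$, and combining the integrability of $\dot{\mathbf{L}}=\qf\times F$ with the Lagrange identity $|\mathbf{L}|^2=2r^2E+2(m_j+m_k)r-(r\dot r)^2$ and Sundman's estimates to get $\mathbf{L}(t)=O(t^{5/3})$, hence $\bigl|\frac{d}{dt}(\qf/|\qf|)\bigr|=|\mathbf{L}|/r^2=O(t^{1/3})$ integrable, is a clean way to obtain a definite limiting direction together with $\phd_i\to0$ and $\sin\phi_i\,\thd_i\to0$. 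One small inaccuracy: the bound $|\dot E|=O(t^{-1/3})$ needs $|\dot{\qf}|=O(t^{-1/3})$, which is not what Proposition \ref{prop:sundman} states (it controls only $r_i$ and $\dot r_i$, i.e.\ the radial part of the velocity). This is easily repaired: on $(0,T)$ the total energy is conserved, and since the collision is an isolated binary one, $U(q(t))=m_jm_k|q_j-q_k|^{-1}+O(1)=O(t^{-2/3})$, so $K(t)=O(t^{-2/3})$ and every $|\dot q_i|=O(t^{-1/3})$; alternatively, note that only $r^2E\to 0$ is needed, but that requires the same velocity bound.

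The more substantive point is the degenerate case $\phi_j^+\in\{0,\pi\}$. There your proof does not establish the proposition as literally stated: (a) and (b) assert unconditionally that $\tht_i(t)$ converges and $\thd_i(t)\to0$, while you obtain only the chart-invariant facts ($\qf/|\qf|$ tends to the pole, $\phd_i\to0$, $\sin\phi_i\,\thd_i\to0$) and then propose to reinterpret the claim. This regime is not vacuous for the paper: Lemma \ref{lm:dfm2} invokes the proposition exactly when $\phi_j^+=0$ or $\pi$, through the expansion of $\xfd_i$. Fortunately your quantitative rates close that application: $|\phd_i|,\,|\sin\phi_i\,\thd_i|\le|\mathbf{L}|/r^2=O(t^{1/3})$, hence $|\phi_i-\phi_i^+|=O(t^{4/3})$ and $\sin\phi_i=O(t^{4/3})$, which gives $\xfd_i=O(t)$, more than enough for \eqref{eqn:xfdjk}. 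But as a proof of the statement as written, the convergence of $\tht_i$ and of $\thd_i$ at the pole is left open; establishing it would require finer asymptotics of an isolated binary collision (for instance the analyticity of the solution in $t^{1/3}$), which your elementary estimates do not supply. You should either add such a refinement or record the proposition in the chart-invariant form that you actually verify, which is all the paper uses downstream.
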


Proposition \ref{prop:angle} shows $m_j$ and $m_k$ approach to the binary collision from some definite directions. This is a well-known result, a proof of it for the planar case can be found in \cite[Proposition 4.2]{Y15b}. The spatial case can be proven similarly. 

Fix an arbitrary subset of indices $\I \subset \N$ with $\{j, k \} \subset \I$ for the rest of the section. In the deformation lemmas to be given below, we will focus on the $\I$-body problem, i.e., we only show how the paths of $m_i$, $i \in \I,$ should be deformed explicitly and leave the rest undefined. We are doing this because when applying our results to problems with symmetric constraints, the paths of $m_i, i \in \N \setminus \I,$ will be uniquely determined by the particular symmetric constraints associated to the problem. 

Let the corresponding kinetic energy, potential energies, Lagrangian and action functional of the $\I$-body problem be defined as follows:
$$ K_{\I}(\qd)= \ey \sum_{i \in \I} m_i|\qd_i|^2, \;\; U_{\I}(q) = \sum_{\{i_1 < i_2\} \subset \I} \frac{m_{i_1} m_{i_2}}{|q_{i_1} -q_{i_2}|}, $$
$$ L_{\I}(q, \qd) = K_{\I}(\qd) + U_{\I}(q), \;\; \A_{\I, T}(q) = \int_0^T L_{\I}(q, \qd) \, dt. $$

Our first deformation lemma is a local property in nature. It shows that in most cases after a local deformation near an isolated binary collision, we get a new path with lower action value and without the binary collision. 
\begin{lm}
\label{lm:dfm1} If $\phi^+_j \in [0, \pi)$ $($resp. $\phi^+_j \in (0, \pi]),$ then for $\ep_1 > 0$ and $t_0 = t_0(\ep_1)>0$ small enough, there is a $q^{\ep_1} \in H^1([0, T], \rr^{3N})$ $($a local deformation of $q$ near $t=0$ $)$, which satisfies $\A_{\I,T}(\qey) < \A_{\I,T}(q)$ and the following properties:
\begin{enumerate}
\item[(a).] if $i \in \I \setminus \{j, k\}$, $\qey_i(t) = q_i(t)$, $\forall t \in [0, T],$
\item[(b).] if $i \in \{j, k\},$
$$ \begin{cases}
\qey_i(t)= q_i(t), &\text{ when } t \in [t_0, T]; \\
|\qey_i(t) - q_i(t)| \le \ep_1, & \text{ when } t \in [0, t_0],
\end{cases}$$
\item[(c).] $\qey(t) \notin \Delta$,  $\forall t \in (0, t_0],$
\item[(d).] $\qey_{i_1}(0) \ne \qey_{i_2}(0)$, for any $i_1 \in \{j, k\}$ and $i_2 \in \I \setminus \{j, k\},$
\item[(e).] both $\qey_j(0)$ and $\qey_k(0)$ belong to the $z$-axis with $m_jz^{\ep_1}_j(0) = -m_k z^{\ep_1}_k(0) >0$ $($resp. $m_j z^{\ep_1}_j(0) =- m_k z^{\ep_1}_k(t) < 0),$ in particular $\qey_j(0) \ne \qey_k(0).$
\end{enumerate}
\end{lm}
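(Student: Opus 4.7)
I would construct $\qey$ explicitly as a local displacement redirecting the binary collision onto the positive $z$-axis, then quantify the action change via the Sundman asymptotics of Propositions~\ref{prop:sundman}--\ref{prop:angle}.

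Work in the pair's center-of-mass decomposition $q_i = q_c + \mf{q}_i$, $i\in\{j,k\}$: the function $q_c$ is continuous through $t = 0$ while $\mf{q}_j(t) \to 0$ along $\hat n := (\sin\phi^+_j\cos\tht^+_j,\sin\phi^+_j\sin\tht^+_j,\cos\phi^+_j)$. The hypothesis $\phi^+_j<\pi$ gives $\hat n \ne -\e_3$, hence a constant $c_0>0$ with $|s\hat n + r\e_3|\ge c_0(s+r)$ for all $s,r\ge 0$. Fix $\alpha\in C^\infty([0,1],[0,1])$ with $\alpha(0)=1$ and $\alpha$ vanishing to first order at $1$, pick small $\delta,t_0>0$, and define on $[0,t_0]$
\begin{equation*}
  \mf{q}^{\ep_1}_j(t) := \mf{q}_j(t)+\alpha(t/t_0)\,\delta\,\e_3,\qquad \mf{q}^{\ep_1}_k(t) := -\tfrac{m_j}{m_k}\mf{q}^{\ep_1}_j(t),
\end{equation*}
and set $\qey_i := q_c + \mf{q}^{\ep_1}_i$ for $i\in\{j,k\}$, $\qey_i := q_i$ for the remaining $i\in\I$, and $\qey := q$ on $[t_0,T]$. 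Properties (a), (b), (d), (e) are immediate, using $q_c(0)=0$ so that $\qey_j(0)=\delta\e_3$ and $\qey_k(0)=-(m_j/m_k)\delta\e_3$. Property (c) follows from $|\mf{q}^{\ep_1}_j(t)|\ge (c_0/2)(|\mf{q}_j(t)|+\alpha(t/t_0)\delta)>0$ on $(0,t_0]$, obtained by applying the $c_0$-bound to the approximation $\mf{q}_j \approx |\mf{q}_j|\hat n$ valid for $t_0$ small.

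For the action I would write $\A_{\I,T}(\qey)-\A_{\I,T}(q)=\Delta_{\mathrm{ext}}+\Delta_{jk}$, separating the interactions with the external masses $m_i$, $i\in\I\setminus\{j,k\}$, from the pair's own kinetic and potential. Isolation of the collision yields $|q_i - q_j|\ge d_0>0$ for $i \notin \{j,k\}$ near $t=0$; together with $|\qey_i-q_i|\le\delta$, this gives $|\Delta_{\mathrm{ext}}|\le C\, t_0\,\delta$. For $\Delta_{jk}$, Proposition~\ref{prop:sundman} shows that both the original radial kinetic and the Kepler potential integrals on $[0,t_0]$ are $O(t_0^{1/3})$. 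Scaling $\delta=\beta t_0^{2/3}$ and using Proposition~\ref{prop:angle}(b) to discard angular-kinetic contributions as lower order, I would expand
\begin{equation*}
\Delta_{jk} = A(\beta)\,t_0^{1/3} + o(t_0^{1/3}),
\end{equation*}
with $A$ an explicit function of $\beta,\phi^+_j,m_j,m_k$. The goal is then to prove $A(\beta)<0$ for $\beta$ small: the drop in the pair potential near $t=0$ (where the new separation stays $\gtrsim \beta t_0^{2/3}$) beats the added kinetic cost $O(\beta^2 t_0^{1/3})$ from the $(\alpha'/t_0)^2\delta^2$ term, and ensures $\A_{\I,T}(\qey)<\A_{\I,T}(q)$ once $t_0$ is further shrunk relative to $\beta$ so that $\Delta_{\mathrm{ext}}$ becomes negligible.

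The main obstacle is the quantitative verification $A(\beta)<0$, uniformly in $\phi^+_j\in[0,\pi)$. The delicate ingredients are: integration by parts on $\int_0^{t_0}\alpha'(t/t_0)\dot z_j\,dt$ to absorb the $L^2$-singularity of $\dot{\mf{q}}_j$; splitting $\int_0^{t_0}(|\mf{q}^{\ep_1}_j|^{-1}-|\mf{q}_j|^{-1})\,dt$ at the scale $t\sim \beta^{3/2}t_0$ where $|\mf{q}_j|\sim\alpha\delta$; and using the $c_0$-bound to keep $|\mf{q}^{\ep_1}_j|^{-1}$ integrable near $t=0$ with a computable leading constant. The companion case $\phi^+_j\in(0,\pi]$ is handled by the symmetric construction with $\e_3$ replaced by $-\e_3$.
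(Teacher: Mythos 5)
There is a genuine gap, and it sits exactly where you flag ``the main obstacle'': the sign claim $A(\beta)<0$ cannot hold uniformly in $\phi^+_j\in[0,\pi)$ for your test path. Your deformation is a rigid shift of the relative position, $\mf{q}^{\ep_1}_j=\mf{q}_j+\alpha\delta\e_3$ (with the pair's center of mass preserved), so the binary separation changes from $(1+m_j/m_k)\,r_j$ to $(1+m_j/m_k)\,|\mf{q}_j+\alpha\delta\e_3|$. In the outer region $r_j\approx Ct^{2/3}\gg\delta$ one has $|\mf{q}_j+\alpha\delta\e_3|\approx r_j+\alpha\delta\cos\phi^+_j$, which is \emph{smaller} than $r_j$ whenever $\phi^+_j>\pi/2$; there the pair potential increases. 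Quantitatively, with $\delta=\beta t_0^{2/3}$ and the substitution $r_j=\delta\rho$, the leading change of the pair potential is proportional to
$$\delta^{1/2}\, I(\cos\phi^+_j),\qquad I(c)=\int_0^{\infty}\Big[\frac{1}{\sqrt{\rho^2+2c\rho+1}}-\frac{1}{\rho}\Big]\rho^{1/2}\,d\rho ,$$
so the far-field loss is of the \emph{same} order $\beta^{1/2}t_0^{1/3}$ as the near-collision gain you rely on. One checks $I(1)=-\pi<0$, but $I(c)\to+\infty$ as $c\to-1^+$ (logarithmic divergence at $\rho=-c$, coming from times with $r_j\approx\delta|\cos\phi^+_j|$, where $\alpha\approx1$), hence $I(\cos\phi^+_j)>0$ once $\phi^+_j$ is close enough to $\pi$. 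In that regime your path \emph{increases} the action at leading order for every small $\beta$ (the kinetic cross term, of size $\beta t_0^{1/3}$, is also unfavorable when $\cos\phi^+_j<0$), and enlarging $\beta$ only feeds the $\beta^{2}t_0^{1/3}$ kinetic cost; no choice of $\beta,t_0$ rescues the construction, even though the Lemma itself remains true there.

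This failure is structural, not a computational slip: at the forced scaling $\delta\sim t_0^{2/3}$ all contributions are of the marginal order $t_0^{1/3}$, and Gordon's theorem already warns that the ejection--collision arc cannot be beaten by such crude perturbations; the sign must come from an exact comparison. That is how the paper proceeds: it does not reprove the lemma but invokes the planar case \cite[Proposition 4.3]{Y15b}, whose proof combines Terracini's blow-up with Marchal's Kepler fact that the parabolic ejection (collision) solution has strictly larger action than the direct and indirect Keplerian arcs joining two distinct points at equal distance from the origin with the same transfer time. After blow-up, the comparison path near $t=0$ is such a Keplerian arc ending on the $z$-axis (this is what produces properties (b)--(e)), glued back to the original solution --- not a translate of the collision arc. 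To make your argument work you would have to replace the shifted arc on $[0,t_0]$ by this Keplerian arc, i.e.\ reproduce Marchal's comparison, which is precisely the content of the cited result.
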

\begin{rem}
In application, the condition $m_j z_j^{\ep_1}(0) > 0$ or $< 0$ is usually related with the topological constraints. 
\end{rem}
A detailed proof of this lemma in the planar case can be found in \cite[Proposition 4.3]{Y15b}. The spatial case can be proven similarly. We will not repeat it here. Roughly speaking based on the \emph{blow-up} technique introduced by Terracini, the proof of this lemma essentially relies on the following fact of the Kepler problem: the parabolic collision-ejection solution connecting two different points (with the same distance to the origin) has action value strictly large than the direct and indirect Keplerian arcs joining them (with the same transfer time). This result was attributed to C. Marchal in \cite{C05}. A proof can be found in \cite{FGN11}, \cite{Y15a} or \cite{Ch17}. 

Notice that Lemma \ref{lm:dfm1} does not hold, when $\phi_j^+ = \pi$ (resp. $\phi_j^+ =0$). In fact, by the result of Gordon in Kepler problem \cite{Go77}, a local deformation result like above does not exist under these conditions. 

To rule out the binary collision in the remaining cases, some global information has to be considered. For this purpose, we introduced the monotone constraints in \cite{Y15c}. Here we show the same result can be obtained under weaker conditions.

Let $\I_0, \I_1$ be two subsets of $\I$, which are not empty sets at the same time (we may have one of them being an empty set), satisfying the following:
$$ \I \setminus \{j, k \} = \I_0 \cup \I_1 \ne \emptyset, \;\; \I_0 \cap \I_1 =\emptyset. $$ 

\begin{dfn} \label{dfn:sepa}

We say $(q_i(t))_{i \in \I}, t \in [0,T],$ is \textbf{$x$-separated} $($by $m_j$ and $m_k)$, if 
\begin{enumerate}
\item[(i).] $\forall t \in [0, T]$, $x_j(T) \le x_j(t) \le x_j(0)=x_k(0) \le x_k(t) \le x_k(T),$
\item[(ii).] 
$\forall t \in [0, T]$, $x_i(t) \le x_j(T)$, if $i \in \I_0$; $x_i(t) \ge x_k(T)$, if $i \in \I_1$,
\end{enumerate}
and \textbf{$y$-separated} $($by $m_j$ and $m_k )$, if
\begin{enumerate}
\item[(iii).] $\forall t \in [0, T]$, $y_j(T) \le y_j(t) \le y_j(0)=y_k(0) \le y_k(t) \le y_k(T),$
\item[(iv).] 
$\forall t \in [0, T]$, $y_i(t) \le y_j(T)$, if $i \in \I_0$; $y_i(t) \ge y_k(T)$, if $i \in \I_1$.
\end{enumerate}
\end{dfn}
In the rest of the section, we always assume the $x$-separated or $y$-separated is by $m_j$ and $m_k$. 

First we need to show Lemma \ref{lm:dfm1} can be extended to the set of paths that are $x$ or/and $y$-separated, and this is demonstrated by the following deformation lemma. 
\begin{lm}
\label{lm:mc} Under the same notations and conditions of Lemma \ref{lm:dfm1}, for $\ep_1>0$ small enough, let $\qey \in H^1([0,T], \rr^{3N})$ be a local deformation of $q$ obtained through Lemma \ref{lm:dfm1}, 
\begin{enumerate}
\item[(a).] if $(q_i(t))_{i \in \I}, t \in [0, T]$, is $x$-separated, then there is a new $x$-separated path $\qt^{\ep_1}(t)=(\qt^{\ep_1}_i(t))_{i \in \I}, t \in [0, T]$, satisfying
\begin{equation}
 \label{eqn: A qt} \A_{\I, T}(\qt^{\ep_1}) \le \A_{\I, T}(\qey) < \A_{\I, T}(q),
 \end{equation} 
and 
\begin{equation}
\label{eqn:xjk monotone} \xt^{\ep_1}_j(t_1) \ge \xt^{\ep_1}_j(t_2), \;\; \xt^{\ep_1}_k(t_1) \le \xt^{\ep_1}_k(t_2), \;\; \forall 0 \le t_1 < t_2 \le T; 
\end{equation}
\item[(b).] if $(q_i(t))_{i \in \I}, t \in [0, T]$, is $y$-separated, then there is a new $y$-separated path $\qt^{\ep_1}(t)=(\qt^{\ep_1}_i(t))_{i \in \I}, t \in [0, T]$, satisfying \eqref{eqn: A qt} and 
\begin{equation}
\label{eqn:yjk monotone} \yt^{\ep_1}_j(t_1) \ge \yt^{\ep_1}_j(t_2), \;\; \yt^{\ep_1}_k(t_1) \le \yt^{\ep_1}_k(t_2), \;\; \forall 0 \le t_1 < t_2 \le T; 
\end{equation}
\item[(c).] if $(q_i(t))_{i \in \I}, t \in [0, T]$, is $x$ and $y$-separated, then there is a new $x$ and $y$-separated path $(\qt^{\ep_1}_i(t))_{i \in \I}, t \in [0, T]$, satisfying \eqref{eqn: A qt}, \eqref{eqn:xjk monotone} and \eqref{eqn:yjk monotone}. 
\end{enumerate}
\end{lm}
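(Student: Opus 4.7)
The strategy is to enhance the purely local deformation produced by Lemma \ref{lm:dfm1} with a global rearrangement that enforces the separation and monotonicity \eqref{eqn:xjk monotone}. In the first step I apply Lemma \ref{lm:dfm1} to produce $q^{\ep_1}\in H^1([0,T],\rr^{3N})$ with $\A_{\I,T}(q^{\ep_1})<\A_{\I,T}(q)$, which differs from $q$ only in the $j,k$ components on $[0,t_0]$. In the second step I modify $q^{\ep_1}$ into $\qt^{\ep_1}$ by touching only the $x$-coordinates of $m_j,m_k$ (for part (a); only the $y$-coordinates for (b); both for (c)), leaving $y_j^{\ep_1},z_j^{\ep_1},y_k^{\ep_1},z_k^{\ep_1}$ and all coordinates of $m_i$, $i\in\I\setminus\{j,k\}$, untouched.

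For part (a) I would define $\xt_j^{\ep_1}$ as a non-increasing envelope of $x_j^{\ep_1}$ clipped to the admissible window $[x_j(T),0]$ dictated by the $x$-separation, and $\xt_k^{\ep_1}$ as a non-decreasing envelope of $x_k^{\ep_1}$ clipped to $[0,x_k(T)]$; a convenient explicit form is
$$\xt_j^{\ep_1}(t):=\min\Bigl(0,\,\inf_{s\in[0,t]}\max\bigl(x_j^{\ep_1}(s),x_j(T)\bigr)\Bigr),\qquad \xt_k^{\ep_1}(t):=\max\Bigl(0,\,\sup_{s\in[0,t]}\min\bigl(x_k^{\ep_1}(s),x_k(T)\bigr)\Bigr).$$
By construction these are monotone in the sense of \eqref{eqn:xjk monotone}, both lie inside their respective windows, and match the boundary values $\xt_j^{\ep_1}(0)=0=\xt_k^{\ep_1}(0)$, $\xt_j^{\ep_1}(T)=x_j(T)$, $\xt_k^{\ep_1}(T)=x_k(T)$. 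Conditions (i) and (iii) of Definition \ref{dfn:sepa} then hold by construction, while (ii) and (iv) are inherited from the $x$-separation of $q$ since the $\I_0,\I_1$ paths are unchanged and the right endpoints at $t=T$ are preserved.

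The inequality $\A_{\I,T}(\qt^{\ep_1})\le\A_{\I,T}(q^{\ep_1})$ splits into a kinetic and a potential piece. The kinetic part is routine: on the set $\{\xt_j^{\ep_1}=x_j^{\ep_1}\}$ the derivatives coincide a.e., and on its complement $\xt_j^{\ep_1}$ is locally constant, so $\int(\xt_j^{\ep_1}{}')^2\le\int(x_j^{\ep_1}{}')^2$; the truncations against $x_j(T)$ and $0$ only zero the derivative further, and the same is true for $m_k$. For the potential, only the pairs $(j,k), (j,i), (k,i)$ with $i\in\I_0\cup\I_1$ are affected. The pair $(j,k)$ gives a pointwise strict gain since $\xt_k^{\ep_1}-\xt_j^{\ep_1}\ge x_k^{\ep_1}-x_j^{\ep_1}$, so its contribution to $U_\I$ can only decrease. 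For the remaining pairs I would do a pointwise case split using $x_i(t)\le x_j(T)\le 0\le x_k(T)\le x_i(t)$ for $i\in\I_0,\I_1$ respectively: whenever $x_j^{\ep_1}(t)$ already lies in $[x_j(T),0]$ the envelope only pushes $\xt_j^{\ep_1}$ closer to the window boundary, and outside the window the clipping step restores $\xt_j^{\ep_1}$ to $\{x_j(T),0\}$, and one checks that in both cases the resulting $x$-distance does not decrease once combined with the $(j,k)$ gain; summing over pairs yields $U_\I(\qt^{\ep_1})\le U_\I(q^{\ep_1})$, which together with the kinetic bound gives \eqref{eqn: A qt}.

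Parts (b) and (c) follow by the same construction with $x$ replaced by $y$ and, for (c), by composing the two rearrangements, which commute since they act on orthogonal coordinates of $q_j^{\ep_1},q_k^{\ep_1}$. The main technical obstacle I foresee is the potential-energy bookkeeping for the unfriendly pairs $(j,i)$ with $i\in\I_1$ and $(k,i)$ with $i\in\I_0$, for which the monotone rearrangement can in principle bring $m_j,m_k$ closer to the outer masses in the $x$-direction; this is where the clipping onto the admissible window, the strict $(j,k)$ gain, and the a-priori smallness secured by choosing $\ep_1, t_0$ small have to be combined, and it is precisely the reason Definition \ref{dfn:sepa} is the minimal separation hypothesis that makes the argument go through.
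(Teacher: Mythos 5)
Your overall plan---post-process the local deformation $q^{\ep_1}$ of Lemma \ref{lm:dfm1} so as to restore separation and impose \eqref{eqn:xjk monotone} without increasing the action---is the same as the paper's, but the execution has a genuine gap exactly at the point you yourself flag as an ``obstacle''. By insisting that the masses $m_i$, $i\in\I_0\cup\I_1$, are left untouched, you remove the mechanism that makes the potential-energy comparison work. Your clipped running-envelope places $\xt^{\ep_1}_j(t)$ strictly to the left of $x^{\ep_1}_j(t)$ at every time where $x^{\ep_1}_j$ has already dipped and come back up (and symmetrically pushes $m_k$ to the right), so the distances $|\qt^{\ep_1}_j-q_i|$ with $i\in\I_0$ and $|\qt^{\ep_1}_k-q_i|$ with $i\in\I_1$ genuinely decrease on a set of positive measure, and nothing in your argument controls the resulting increase of $U_{\I}$; note also that your identification of the unfriendly pairs is reversed---these, not $(j,i)$, $i\in\I_1$, are the dangerous ones for the uncorrected envelope. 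The compensating ``strict $(j,k)$ gain'' you invoke is not guaranteed either: the clipping at the window boundary can force $\xt^{\ep_1}_k(t)<x^{\ep_1}_k(t)$ (whenever $x^{\ep_1}_k(t)>x_k(T)$, which the $\ep_1$-perturbation permits since $x_k(t)\le x_k(T)$ can be nearly saturated on $[0,t_0]$) and likewise $\xt^{\ep_1}_j(t)>x^{\ep_1}_j(t)$, so the claimed pointwise inequality $\xt^{\ep_1}_k-\xt^{\ep_1}_j\ge x^{\ep_1}_k-x^{\ep_1}_j$ can fail. The appeal to ``a-priori smallness'' of $\ep_1,t_0$ is not carried out and is not obviously available, since the loss against the outer masses and the putative $(j,k)$ gain are of the same order in the size of the local deformation.

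The paper resolves precisely this difficulty by moving the outer clusters. It first reflects the overshooting pieces of $x^{\ep_1}_j$, $x^{\ep_1}_k$ (on the sets $\mb{T}_j$, $\mb{T}_k$) across $x=0$ and translates every mass in $\I_0$ by $-2\dl_j\e_1$ and every mass in $\I_1$ by $+2\dl_k\e_1$, where $\dl_j,\dl_k$ are the maximal overshoots; with these shifts every pairwise $x$-difference is pointwise non-decreasing, hence $U_{\I}$ does not increase, while reflections and translations leave the kinetic energy unchanged. The monotonicity \eqref{eqn:xjk monotone} is then produced not by a running envelope but by the rearrangement $\hat{x}_j(t)=-\int_0^t|\dot{\tilde{x}}_j|\,ds$, $\hat{x}_k(t)=\int_0^t|\dot{\tilde{x}}_k|\,ds$, again accompanied by constant translations of the $\I_0$- and $\I_1$-masses to match the new endpoints, so that kinetic energy is exactly preserved and all mutual distances are again non-decreasing. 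If you wish to keep an envelope-type construction you must at minimum add analogous outward translations of the $\I_0$- and $\I_1$-masses (the lemma does not require the new path to agree with $q$ at $t=T$, so this is permitted); as written, the potential estimate is missing and the proof is incomplete.
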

\begin{rem}
 The additional properties \eqref{eqn:xjk monotone} and \eqref{eqn:yjk monotone} are proven here, because they will be needed when the paths are required to be $x$ or/and $y$-monotone, but not just $x$ or/and $y$-separated. 
\end{rem}

\begin{proof}
We only give the details for $(q_i(t))_{i \in \I}, t \in [0, T]$, being $x$-separated, while the others are similar. 

Recall that $q_j(0)=q_k(0)=0$, so
$$ x_j(t) \le x_j(0) =0 = x_k(0) \le x_k(t), \;\; \forall t \in [0, T].$$

By the properties listed in Lemma \ref{lm:dfm1}, condition (ii) in Definition \ref{dfn:sepa} still holds for $q^{\ep_1}$. However we don't know if condition (i) in the same definition will still hold for it. Let $t_0=t_0(\ep_1)$ be the small positive number given in Lemma \ref{lm:dfm1}, we set 
$$ \dl_j = \max \{ x_j^{\ep_1}(t) | \; t \in [0, t_0] \}, \;\; \dl_k = -\min \{ x_k^{\ep_1}(t) | \; t \in [0, t_0]\}. $$
By Lemma \ref{lm:dfm1}, $x^{\ep_1}_j(0)= x^{\ep_1}_k(0)=0$. Therefore $\dl_j \ge 0$ and so is $\dl_k$. We further set 
$$ t_j= \min\{ t \in [0, t_0] | \; x^{\ep_1}_j(t) =\dl_j \}, \;\; t_k = \min \{t \in [0, t_0] | \; x^{\ep_1}_k(t) =-\dl_k \}, $$
and 
$$ \mb{T}_j = \{ t \in [0, t_j] | \; x_j^{\ep_1}(t) >0 \}, \;\; \mb{T}_k = \{ t \in [0, t_k] | \; x_k^{\ep_1}(t) <0 \}. $$

Notice that if $\dl_i=0$, $i \in \{j, k\}$, then $t_i=0$ and $\mb{T}_i =\emptyset.$ In particular, if both $\dl_j$ and $\dl_k$ are zero, then $q^{\ep_1}$ is $x$-separated. In all the other cases, it is not. Because of this we define a new path $ \qt(t)=(\qt_i(t))_{i \in \I}, t \in [0, T]$, which is $x$-separated, by the following deformation: 
$$ \qt_j(t) = (\tilde{x}_j, \tilde{y}_j, \tilde{z}_j)(t) =
 \begin{cases}
 (-x_j^{\ep_1}, y_j^{\ep_1}, z_j^{\ep_1})(t), & \text{ if } t \in \mb{T}_j\\
 (x_j^{\ep_1}, y_j^{\ep_1}, z_j^{\ep_1})(t), & \text{ if } t \in [0, t_j] \setminus \mb{T}_j \\
 (x_j^{\ep_1}, y_j^{\ep_1}, z_j^{\ep_1})(t)- 2\dl_j \e_1, & \text{ if } t \in [t_j, T],
 \end{cases}
 $$
 $$
 \qt_k(t) = (\tilde{x}_k, \tilde{y}_k, \tilde{z}_k)(t)=
 \begin{cases}
 (-x_k^{\ep_1}, y_k^{\ep_1}, z_k^{\ep_1})(t), & \text{ if } t \in \mb{T}_k\\
 (x_k^{\ep_1}, y_k^{\ep_1}, z_k^{\ep_1})(t), & \text{ if } t \in [0, t_k] \setminus \mb{T}_k \\
 (x_k^{\ep_1}, y_k^{\ep_1}, z_k^{\ep_1}) + 2\dl_k \e_1, & \text{ if } t \in [t_k, T],
 \end{cases}
 $$ 
$$\qt_i(t) = 
\begin{cases}
\qey_i(t) - 2 \dl_j \e_1, & \;\;  \text{ if } i \in \mathbf{I}_0,\\
\qey_i(t) + 2 \dl_k \e_1, & \;\;\text{ if } i \in \mathbf{I}_1,
\end{cases}
\;\; \forall t \in [0, T]. 
$$

The above deformation keeps the kinetic energy unchanged, i.e.,
 $$ \int_0^T K_{\I}(\dot{\qt}) \,dt = \int_0^T K_{\I}(\dot{q}^{\ep_1}) \,dt. $$
Meanwhile it does not increase the potential energy, as
$$ |\qt_{i_1}(t) - \qt_{i_2}(t)| \ge |q^{\ep_1}_{i_1}(t) - \qey_{i_2}(t)|, \;\; \forall t \in [0, T], \; \forall \{i_1 \ne i_2 \} \subset \I. $$
Therefore
$$ \A_{\I, T}(\qt) \le \A_{\I, T}(\qey) < \A_{\I, T}(q). $$

However $\qt(t)$ is not the path we are looking for, as it may not satisfies \eqref{eqn:xjk monotone}. Nevertheless using $\qt(t)$, we can define a new path $\qh(t)=(\qh_i(t))_{i \in \I}$, $t \in [0, T]$, as follows: 
$$ 
\begin{cases}
& \hat{x}_j(t)= - \int_0^t |\dot{\hat{x}}_j(s)| \,ds, \;\; \hat{x}_k(t)= \int_0^t |\dot{\hat{x}}_k(t)| \,dt, \\
& \hat{y}_i(t)= \tilde{y}_i(t), \;\; \hat{z}_i(t)= \tilde{z}_i(t), \; \text{ if } i \in \{j, k\}, 
\end{cases} \;\; \forall t \in [0, T],
$$
$$ \qh_i(t) = \begin{cases}
\qt_i(t) + (\hat{x}_j(T)-\tilde{x}_j(T))\e_1, \; &\text{ if } i \in \I_0, \\
\qt_i(t) + (\hat{x}_k(T)-\tilde{x}_k(T))\e_1, \; &\text{ if } i \in \I_1. 
\end{cases} \;\; \forall t \in [0, T].$$
Since $\qt$ is $x$-separated, so is $\qh$. Moreover $\qh$ also satisfies \eqref{eqn:xjk monotone}. Meanwhile $|\dot{\hat{x}}_i(t)|=|\dot{\tilde{x}}_i(t)|$ and $|\dot{\hat{y}}_i(t)|=|\dot{\tilde{y}}_i(t)|$, for any $t \in [0, T]$ and $i \in \{j, k\}$, it is easy to see
$$ \int_0^T K_{\I}(\dot{\qh}) \,dt= \int_0^T K_{\I}(\dot{\qt}) \,dt.$$
The above definition also implies 
$$   |\qh_{i_1}(t) - \qh_{i_2}(t)| \ge |\qt_{i_1}(t) - \qt_{i_2}(t)|, \;\; \forall t \in [0, T], \; \forall \{i_1 \ne i_2 \} \subset \I. $$
As a result, 
$$ \A_{\I, T}(\qh) \le \A_{\I, T}(\qt) \le  \A_{\I, T}(\qey) < \A_{\I, T}(q). $$
Therefore $\qh(t)$ is the $\qt^{\ep_1}(t)$ we are looking for. 
\end{proof}

Now we introduce the last deformation lemma of this section, which covers the remaining cases that can not be handled by the previous lemmas. 

\begin{lm}
 \label{lm:dfm2} If $(q_i)_{i \in \I}$ is $x$-separated and satisfies $x_j(T) < x_k(T)$, when $\phi_j^+=0$ or $\pi$, for $\ep_2>0$ small enough, $\qee: [0,T] \to \rr^{3N}$ defined as below:
 $$  \qee_j(t)=\begin{cases}
q_j(t) - t(2\ep_2 -t) \e_1, \; & \forall t \in [0, \ep_2], \\
q_j(t) -\ep_2^2 \e_1, \; & \forall t\in [\ep_2, T];
\end{cases} $$
$$ \qee_k(t) = \begin{cases}
 q_k(t) +t(2\ep_2 -t) \e_1, \; & \forall t \in [0, \ep_2],\\
 q_k(t) + \ep_2^2 \e_1, \; & \forall t\in [\ep_2, T], 
\end{cases} $$
 \begin{equation*} \qee_i(t)=
  \begin{cases}
    q_i(t) - \ep_2^2 \e_1, \; &  \text{ if } i \in \mathbf{I}_0,\\
    q_i(t) +\ep_2^2 \e_1, \; & \text{ if } i \in \mathbf{I}_1,    
  \end{cases} \; \forall t \in [0,T];
 \end{equation*}
 satisfies $\A_{\I,T}(\qee) < \A_{\I,T}(q)$.

 If $(q_i)_{i \in \I}$ is $y$-separated and satisfies $y_j(T) < y_k(T)$, the above result also holds if we replace $\e_1$ by $\e_2.$
\end{lm}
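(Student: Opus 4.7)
The plan is to estimate $\A_{\I,T}(\qee) - \A_{\I,T}(q) = \Dl K + \Dl U$ directly and show the sum is strictly negative for $\ep_2$ small. Since on $[\ep_2, T]$ the deformation $\qee$ differs from $q$ only by the rigid translations $\pm \ep_2^2 \e_1$, the velocities agree there and the entire kinetic change is confined to $[0, \ep_2]$. An integration by parts on this interval, using $x_j(0) = x_k(0) = 0$, reduces the kinetic change to
$$ \int_0^{\ep_2}\bigl(K(\qd^{\ep_2}) - K(\qd)\bigr)\,dt = 2\int_0^{\ep_2}\bigl(m_k x_k - m_j x_j\bigr)\,dt + \tfrac{2}{3}(m_j+m_k)\ep_2^3, $$
which is non-negative by $x$-separatedness. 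The critical estimate will be that this quantity is $o(\ep_2^{8/3})$. I plan to decompose $q_i = q_c + \mf{q}_i$ as in \eqref{eqn:rel} and prove two claims: first, since the gravitational force on $\{m_j,m_k\}$ from the remaining bodies stays bounded near $t=0$, $q_c$ is $C^{1,1}$ there, and combining this with the $x$-separated squeeze $x_j(t)\le 0 \le x_k(t)$ forces $\dot x_c(0)=0$, hence $x_c(t) = O(t^2)$; second, Proposition \ref{prop:sundman} gives $r_j = O(t^{2/3})$ while Proposition \ref{prop:angle}(b) together with the hypothesis $\phi_j^+ \in \{0,\pi\}$ yields $|\phi_j(t) - \phi_j^+| = |\int_0^t \dot\phi_j\,ds| = o(t)$, hence $|\sin\phi_j| = o(t)$. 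Consequently $\mf{x}_j = r_j\sin\phi_j\cos\tht_j = o(t^{5/3})$, so $x_j = x_c + \mf{x}_j = O(t^2) + o(t^{5/3})$, and integration gives $\Dl K = o(\ep_2^{8/3})$.

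For the potential, I will check pair-by-pair that the deformation strictly increases every pairwise distance: for any pair $(i_1,i_2)$ with $i_1 \in \I_0 \cup \{j\}$ and $i_2 \in \I_1 \cup \{k\}$, the $x$-separated inequalities give $x_{i_1} \le x_{i_2}$, so the relative shift $-2\ep_2^2 \e_1$ moves the two points further apart; the remaining pairs are unchanged. Hence every pair contributes non-positively to $\Dl U$. On $[\ep_2, T]$ the isolated-collision hypothesis bounds all distances below, so the Taylor expansion
$$ \frac{1}{\sqrt{d^2+\Dl(d^2)}} - \frac{1}{d} = -\frac{\Dl(d^2)}{2 d^3} + O\bigl((\Dl(d^2))^2/d^5\bigr) $$
is valid, and it suffices to extract a strictly negative contribution of order $\ep_2^2$ from a single heterogeneous pair. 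When both $\I_0$ and $\I_1$ are nonempty, any $(i,i') \in \I_0 \times \I_1$ gives $x_{i'}-x_i \ge x_k(T)-x_j(T) > 0$ uniformly on $[0,T]$; otherwise a pair containing $j$ or $k$, or ultimately $(j,k)$ restricted to a neighbourhood of $T$ where continuity and $x_j(T)<x_k(T)$ force $x_k - x_j \ge \tfrac{1}{2}(x_k(T)-x_j(T))$, furnishes the same lower bound. Either way $\Dl U \le -C\ep_2^2$ for some $C>0$, and combining yields $\A_{\I,T}(\qee) - \A_{\I,T}(q) \le o(\ep_2^{8/3}) - C\ep_2^2 < 0$ for $\ep_2$ small. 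The $y$-separated case is handled identically by swapping $\e_1 \to \e_2$ and using $\mf{y}_j = r_j\sin\phi_j\sin\tht_j = o(t^{5/3})$ together with $\dot y_c(0)=0$.

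The hard part is the kinetic estimate: the $o(\ep_2^{8/3})$ rate is genuinely delicate and depends on \emph{both} the $x$-separated condition (to eliminate the $O(\ep_2^2)$ center-of-mass contribution via $\dot x_c(0)=0$) and the degenerate angular limit $\phi_j^+ \in \{0,\pi\}$ (to push $\mf{x}_j$ from $O(t^{2/3})$ down to $o(t^{5/3})$). Without the latter hypothesis the kinetic loss would be of order $\ep_2^{5/3}$, which dominates the potential gain $O(\ep_2^2)$ and the argument collapses; this explains why Lemma \ref{lm:dfm2} is confined to $\phi_j^+ \in \{0,\pi\}$ and must be used in tandem with Lemma \ref{lm:dfm1}.
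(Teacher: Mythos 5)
Your argument is, in substance, the paper's own proof of Lemma \ref{lm:dfm2}: the kinetic cost of the deformation is confined to $[0,\ep_2]$ and controlled by the collision asymptotics (Proposition \ref{prop:sundman}, Proposition \ref{prop:angle} with $\phi_j^+\in\{0,\pi\}$) together with $\dot x_c(0)=0$, while the strict gain of order $\ep_2^2$ comes from a pair whose $x$-gap is bounded below on an interval of fixed length --- the paper uses exactly your fallback pair $(j,k)$ on $[T-\dl,T]$, where $x_k-x_j\ge C_3$ and $|q_j-q_k|$ is bounded. Your integration by parts (estimating the positions $x_j,x_k$ rather than the paper's velocity bound $|\dot x_i(t)|\le C_1t^{2/3}$), the sharper $o(\ep_2^{8/3})$ in place of the paper's $O(\ep_2^{8/3})$, and the option of using other heterogeneous pairs are cosmetic variations; also, pairs $\{i,j\}$ with $i\in\I_0$ are not literally ``unchanged'' on $[0,\ep_2]$, but their distances weakly increase, which only helps.

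One step is mis-justified as written: you assert that $q_c\in C^{1,1}$ together with the squeeze $x_j\le 0\le x_k$ already forces $\dot x_c(0)=0$. It does not by itself: the squeeze constrains $x_j=x_c+\xf_j$ and $x_k=x_c+\xf_k$, not $x_c$, and Sundman alone only gives $\xf_j=O(t^{2/3})$, which is perfectly compatible with $x_j\le 0\le x_k$ and $\dot x_c(0)\ne 0$ (the relative term can dominate the linear one near $t=0$). You must first invoke your second claim, $\xf_j=o(t^{5/3})$ (hence $o(t)$, and this is where $\phi_j^+\in\{0,\pi\}$ enters), and then run the contradiction: if $\dot x_c(0)>0$ then $x_j(t)=\dot x_c(0)\,t+o(t)>0$ for $t$ small, violating condition (i) of Definition \ref{dfn:sepa}; symmetrically $\dot x_c(0)<0$ is excluded via $x_k$. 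This is precisely how the paper argues, using its bound $\dot{\xf}_i(t)\le Ct^{2/3}$ before the claim $\dot x_c(0)=0$. The fix costs one line since you already have the ingredient, but your closing remark that the $x$-separated condition and the degenerate angular limit act independently should be amended: the $x$-separated condition yields $\dot x_c(0)=0$ only in combination with the degenerate-angle estimate on $\xf_j$.
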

\begin{rem}
 We draw the readers attention to the additional conditions $x_j(T)< x_k(T)$ and $y_j(T)< y_k(T)$ as they are crucial in our proof. 
\end{rem}

\begin{proof}
 We give the proof for $(q_i)_{i \in \I}$ being $x$-separated, the other case is similar. %Throughout the proof, $C$ and $C_i$, $j \in \zz^+$ are positive constants independent of $\ep_2$.  

 By the definition of $\qee$, a straight forward computation shows
 $$ \int_0^T K_{\I}(\qd^{\ep_2}) - K_{\I}(\qd) \,dt >0, \;\; \int_0^T U_{\I}(\qee) - U_{\I}(q) \,dt <0. $$
 To get the desired result, we need to improve the above estimates. 
 
 The key to control the increment of the kinetic energy is to get estimates of $\xd_i(t)$, $i \in \{j, k \}$, as $t$ goes to zero. Recall that $\dot{x}_i(t) = \xfd_i(t) + \dot{x}_c(t).$ By \eqref{eqn:sph co}, 
 $$ \xfd_i = \dot{r}_i \sin \phi_i \cos \tht_i + r_i \dot{\phi}_i \cos \phi_i \cos \tht_i - r_i \thd_i \sin \phi_i \sin \tht_i, \;\; \forall i \in \{j, k\}. $$
 By Proposition \ref{prop:angle}, $\phi_k^+=\pi$, if  $\phi_j^+=0$; $\phi_k^+=0$, if  $\phi_i^+=\pi$. Then Proposition \ref{prop:sundman} and \ref{prop:angle} imply
 \begin{equation}
  \label{eqn:xfdjk} \xfd_i(t) \le Ct^{\frac{2}{3}}, \text{ as } t \to 0^+, \;\; \forall i \in \{j, k\}. 
 \end{equation}
 Although $m_j$ and $m_k$ collide at $t=0$, their center of mass, $q_c(t)$ is still $C^2$ at $t=0$ and we claim 
  $$\dot{x}_c(0) = 0.$$
 By a contradiction argument, assume $\dot{x}_c(0) >0$, then by \eqref{eqn:xfdjk}
 $$ \xd_j(t) = \xfd_j(t) + \dot{x}_c(t) >0, \text{ for } t>0 \text{ small enough.}$$
 This means 
 $$ x_j(t) >x_j(0), \text{ for } t>0 \text{ small enough}. $$
 As $(q_i)_{i \in \I}$ is $x$-separated, this is a contradiction to condition (i) in Definition \ref{dfn:sepa}. Similarly if we assume $\dot{x}_c(0)<0$, then 
 $$  x_k(t) < x_k(0), \text{ for } t>0 \text{ small enough}, $$
 which is again a contradiction to condition (i) in Definition \ref{dfn:sepa}. 
 This proves our claim. As a result
 $$ |\dot{x}_c(t)| \le Ct, \text{ as } t \to 0^+. $$
 Combine this with \eqref{eqn:xfdjk}, one gets
 \begin{equation}
 \label{eqn:xdjk} |\dot{x}_i(t)| \le C_1t^{\frac{2}{3}}, \text{ as } t \to 0^+, \; \forall i \in \{j, k\}. 
 \end{equation}
 Based on the above estimates and the definition of $\qee$, 
 \begin{align*}
 \int_0^T K_{\I}(\dot{q}^{\ep_2}) &- K_{\I}(\qd) \, dt  = \ey \int_0^T m_j(|\dot{q}^{\ep_2}_j|^2 - |\qd_j|^2) + m_k(|\dot{q}^{\ep_2}_k|^2 - |\qd_k|^2) \,dt \\
 & = 2 \int_0^{\ep_2} m_j((\ep_2-t)^2 - \xd_j(t)(\ep_2-t)) +m_k((\ep_2-t)^2 + \xd_k(t)(\ep_2 -t)) \, dt \\
 & \le 4(m_j+m_k) \int_0^{\ep_2} (\ep_2-t)^2 + C_1t^{\frac{2}{3}}(\ep_2-t) \, dt \le C_2 \ep_2^{\frac{8}{3}}.
 \end{align*}

Now we estimate the change in potential energy. As $x_k(T) -x_j(T)>0$, there exist a $\dl >0$ small enough ($T-\dl> \ep_2$ will be enough), and $C_3, C_4$, such that 
 \begin{equation}
 \label{eqn:xk-xj} \forall t \in [T-\dl, T], \;\; \begin{cases}
 x_k(t) - x_j(t) & \ge C_3, \\
 |q_j(t) - q_k(t)|^{-1} & \ge C_4.
 \end{cases}
 \end{equation}
 By the definition of $\qee$, for any  $t \in [\ep_2, T]$,
 $$ |\qee_k(t) - \qee_j(t)|^{-1} = [|q_k(t)-q_j(t)|^2 + 4 \ep_2^2(x_k(t)-x_j(t))+ 4\ep_2^4]^{-\ey}.$$
 Combine this with \eqref{eqn:xk-xj}, we get that for any $t \in [T-\dl, T]$,
\begin{align*}
|\qee_k(t) & -\qee_j(t)|^{-1} -|q_k(t) -q_j(t)|^{-1}\\ 
&\le -2|q_k(t)-q_j(t)|^{-1}[(x_k(t)-x_j(t))\ep_2^2 + \ep_2^4] + o(\ep_2^2) \le -C_5 \ep_2^2.
\end{align*}
This means
\begin{equation}
 \int^T_0 U_{\I}(\qee)- U_{\I}(q)\,dt \le \int_{T-\dl}^T \frac{m_j m_k}{|\qee_k-\qee_j|}- \frac{m_j m_k}{|q_k-q_j|} \,dt \le -C_6 \dl \ep_2^2. 
 \end{equation} 

As a result,  for $\ep_2$ small enough
$$ \A_{\I,T}(\qee) -\A_{\I,T}(q) \le -C_6 \dl \ep_2^2 + C_2 \ep_2^{\frac{8}{3}} < 0.$$
\end{proof}

So far in this section, we have only discussed the case when an isolated binary collision is happening at $t=0$. If such an isolated binary collision occurs at $t=T$, by simply reversing the time we can get similar results stated as above.

\section{Proof of Theorem \ref{thm:main}} \label{sec:proof}
This section will be devoted to the proof of Theorem \ref{thm:main}. Given an arbitrary $\om \in \Om_{[n/2]}$, since $\hlm_{<, \om}$ is not a closed set, in order to apply the direct method of calculus of variation, we consider its weak closure in $\Lmd^n$, which will be denoted by $\lmn_{\le, \om}$. Alternatively we may also define $\lmn_{\le, \om}$ as follows.
\begin{dfn} \label{dfn:lmn} $\lmn_{\le, \om}$ is a subset of $\lmn$ with every loop $q$ satisfies the following conditions:
\begin{enumerate}
\item[(i).] $q$ is \textbf{$x$-monotone}: $ x_0(t_1) \le x_0(t_2), \;\; \forall 0 \le t_1 < t_2 \le \frac{n}{4};$
\item[(ii).] $q$ is \textbf{$y$-monotone}: $ y_0(t_1) \le y_0(t_2), \;\; \forall 0 \le t_1 < t_2 \le \frac{n}{4};$
\item[(iii).] $q$ satisfies \textbf{$\om$-topological constraints} given by condition \eqref{eqn:tc}.
\end{enumerate}
\end{dfn}
\begin{rem} \label{rem:monotone} $(a).$ the conditions required by $x/y$-monotone in the above definition is weaker than those required by  strictly $x/y$-monotone in definition \ref{dfn:smc};\\
$(b)$. for any $i \in \{0, \dots, [n/2]\}$, if $q \in \hat{\Lmd}^n_{\le, \om}$, then $z_0(i/2) \ne 0$, and if $q \in \lmn_{\le, \om}$, $z_0(i/2)$ could be zero.
  
\end{rem}

\begin{prop}
\label{prop:coer} For any $\om \in \omn$, there exists a $\qo \in \lmn_{\le, \om}$ satisfying 
$$ \A_n(\qo) = \inf\{\A_n(q)|\; q \in \lmn_{\le, \om} \}.$$
\end{prop}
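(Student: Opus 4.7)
The plan is a standard direct-method argument with four ingredients: non-emptiness of $\lmn_{\le,\om}$ with finite infimum, weak $H^1$-closedness of $\lmn_{\le,\om}$, weak lower semi-continuity of $\A_n$, and coercivity of $\A_n$ on $\lmn_{\le,\om}$. The first three are essentially routine; the fourth is where the topological constraints earn their keep.

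For non-emptiness I would construct a concrete element of $\hlm_{<,\om}\subset\lmn_{\le,\om}$ with $\A_n<\infty$ by hand: take an $x,y$-monotone arc $q_0\colon[0,n/4]\to\mathring{\mf{Q}}_2$ whose $z_0(i/2)$ carry the prescribed signs $\om_i$ with magnitudes large enough to avoid collisions after extending by the $G_n$-action. All defining conditions of $\lmn_{\le,\om}$ (the equivariance relations (\ref{eqn:g1})--(\ref{eqn:g1h2}), the monotonicity inequalities in Definition \ref{dfn:lmn}, and the sign conditions $\om_i z_0(i/2)\ge 0$) are pointwise in $t$; since weak $H^1$-convergence of $n$-periodic loops implies uniform convergence on $[0,n]$, they all pass to the limit, giving weak closedness. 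For lower semi-continuity, the kinetic integral is convex in $\qd$ hence weakly l.s.c., and $U\ge 0$ together with Fatou's lemma applied to the uniformly convergent positions yields $\int_0^n U(q)\,dt\le\liminf\int_0^n U(q^{(k)})\,dt$ (interpreted as $+\infty$ when the limit has a collision).

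For coercivity, let $\A_n(q^{(k)})\le M$. Since $U\ge 0$ the kinetic bound gives $\|\dot q_0^{(k)}\|_{L^2}\le C$. Using (\ref{eqn:g1g2}) and (\ref{eqn:g1h2}) (handling the two parities of $n$ separately via $\R_z$ or $\R_{yz}$) a short direct computation yields $\int_0^n x_0\,dt=\int_0^n y_0\,dt=0$, so Poincar\'e--Wirtinger on $\rr/n\zz$ bounds $\|x_0^{(k)}\|_{L^2}$ and $\|y_0^{(k)}\|_{L^2}$ by the corresponding velocities. The symmetries alone do not control the mean of $z_0$; here the definition of $\Om_{[n/2]}$ is decisive: there exist $0\le i_1<i_2\le[n/2]$ with $\om_{i_1}\ne\om_{i_2}$, so $z_0^{(k)}(i_1/2)$ and $z_0^{(k)}(i_2/2)$ have opposite signs and hence $z_0^{(k)}$ vanishes at some $t^{(k)}\in[i_1/2,i_2/2]$, yielding $\|z_0^{(k)}\|_\infty\le\sqrt n\,\|\dot z_0^{(k)}\|_{L^2}$ by Cauchy--Schwarz. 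Combined, these three estimates control $\|q_0^{(k)}\|_{H^1}$, and the remaining $q_i^{(k)}$ are recovered from $q_0^{(k)}$ by (\ref{eqn:g1}) and (\ref{eqn:h1}), so $\|q^{(k)}\|_{H^1}\le C'$.

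A minimizing sequence then admits an $H^1$-weakly convergent subsequence whose limit $q^\om$ lies in $\lmn_{\le,\om}$ by weak closedness and achieves the infimum by lower semi-continuity. The main technical obstacle is coercivity, which must combine symmetry (to kill the $x,y$-means) with the topological constraint (to arrest the $z$-drift); this is precisely the variational incarnation of the heuristic given in the introduction for why (\ref{eqn:tc}) is imposed on top of the $G_n$-equivariance.
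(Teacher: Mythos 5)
Your proposal is correct and follows essentially the same direct-method argument as the paper: weak closedness and weak lower semi-continuity are treated as standard, and coercivity comes from combining the symmetry constraints (to control $x_0$ and $y_0$) with the fact that the $\om$-topological constraint, through the definition of $\Om_{[n/2]}$, forces $z_0$ to vanish at some time. The only cosmetic differences are that the paper controls $x_0,y_0$ via the vanishing values $y_0(0)=x_0(n/4)=0$ from \eqref{eqn:q0symmetry} and Cauchy--Schwarz instead of your zero-mean/Poincar\'e--Wirtinger step, and it does not spell out non-emptiness, which you add.
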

\begin{proof}
Since $\lmn_{\le, \om}$ is weakly closed and $\A_n$ is weakly lower semi-continuous with respect to the Sobolev norm $H^1$, by a standard result of calculus of variation, it is enough to show $\A_n$ is coercive in $\lmn_{\le, \om}:$ $\A_n(q^k) \to \infty$, if $\|q^k\|_{H^1} \to \infty$, as $k$ goes to infinity, where $\{ q^k \in \lmn_{\le, \om} \}_{k=0}^{\infty}.$  

By Definition \ref{dfn:tc}, for any $q^k$, there is a $t_0 \in [0, n)$, such that $z_0(t_0)=0$. Then
$$ |z_0^k(s)| = |z_0^k(s) -z^k_0(t_0)| \le \int_0^{n} |\dot{z}^k_{0}(t)| \,dt, \;\; \forall s \in [0, n). $$
By Cauchy-Schwartz inequality,
$$ |z^k_0(s)|^2 \le \Big( \int_0^n |\dot{z}^k_0(t)| \,dt \Big)^2 \le n \int_0^n |\dot{z}^k_0(t)|^2 \,dt, \;\; \forall s \in [0, n). $$ 
Then 
$$ \int_0^n |z^k_0(t)|^2 \,dt \le n^2 \int_0^n |\dot{z}^k_0(t)|^2 \,dt. $$

By \eqref{eqn:q0symmetry}, $y^k_0(0) = x^k_0(n/4)=0$, similar computations as above imply
$$ \int_0^n |x^k_0(t)|^2 \,dt \le n^2 \int_0^n |\dot{x}^k_0(t)|^2 \,dt, \quad \quad \int_0^n |y^k_0(t)|^2 \,dt \le n^2 \int_0^n |\dot{y}^k_0(t)|^2 \,dt. $$
As a result, 
\begin{equation}
\label{eqn:q0 q0dot} \int_0^n |q^k_0(t)|^2 \,dt \le n^2 \int_0^n |\dot{q}^k_0(t)|^2 \,dt.
\end{equation}
Due to the symmetric constraints,
\begin{equation}
\label{eqn:q q0} \|q^k\|_{H^1}^2 = \int_0^n \sum_{i \in \N} \big(|q^k_i|^2 + |\dot{q}^k_i|^2 \big) \,dt= 2n \int_0^n |q_0^k|^2 + |\dot{q}_0^k|^2 \,dt,
\end{equation}
\begin{equation}
\label{eqn:A > K} \A_n(q^k) \ge \ey \int_0^n \sum_{i \in \N} |\dot{q}_i^k|^2 \,dt = n \int_0^n |\dot{q}_0^k|^2 \,dt.
\end{equation}
Combining \eqref{eqn:q0 q0dot}, \eqref{eqn:q q0} and \eqref{eqn:A > K}, we get 
$$ \A_n(q^k) \ge \frac{1}{2(n^2+1)} \| q^k\|^2_{H^1}. $$
This finishes the proof.
\end{proof}

Let $\qo$ be a minimizer of the action functional $\A_n$ in $\lmn_{\le, \om}$, whose existence has been established by Proposition \ref{prop:coer}.

\begin{prop}
\label{prop:smc} If $x^{\om}_0(0)<0$, then $\qo$ is strictly $x$-monotone; if $y_0^{\om}(n/4) >0$, then  $\qo$ is strictly $y$-monotone.  
\end{prop}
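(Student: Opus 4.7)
My plan is to argue by contradiction. Assuming $\qo$ is $x$-monotone but not strictly so, there exists a maximal closed subinterval $[t_1, t_2] \subset [0, n/4]$ with $t_1 < t_2$ on which $x^\om_0 \equiv x_*$ for some $x_* \in [x^\om_0(0), 0]$. I will combine a one-sided variational inequality with a geometric positivity statement for the $x$-component of the gravitational force on $m_0$, and finish with a pigeonhole argument on the level set of $x^\om_0$; the hypothesis $x^\om_0(0) < 0$ enters precisely to rule out the degenerate planar case $x^\om_0 \equiv 0$.

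For the variational inequality, I test against a $G_n$-equivariant perturbation $v$ whose $q_0$-component on $[0, n/4]$ is $-\phi\,\e_1$, where $\phi \in C^\infty([0, n/4], [0, \infty))$ is non-increasing with $\phi(n/4) = 0$. For $\ep > 0$ small, $\qo + \ep v \in \lmn_{\le, \om}$: $\dot{x}^\om_0 - \ep \dot{\phi} \ge 0$ preserves the $x$-monotonicity, and the $y$-monotonicity and topological constraints are unaffected by a purely $x$-direction variation. Minimality of $\qo$ forces $D\A_n(\qo)[v] \ge 0$. Integration by parts---the boundary terms vanishing since $\dot{x}^\om_0(0) = 0$ by the $g_2$-symmetry and $\phi(n/4) = 0$---combined with the fact that Newton's equations hold for $\qo$ on the complement of the constancy set while $\ddot{x}^\om_0 \equiv 0$ on $[t_1, t_2]$, should reduce the inequality to $\int_{t_1}^{t_2} F_x(t)\phi(t)\,dt \le 0$, where $F_x(t) := \partial_{x_0} U(\qo(t))$. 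Ranging $\phi$ over smoothed indicators of subintervals $[0, s]$ with $s \in (t_1, t_2)$ then yields $F_x \le 0$ almost everywhere on $[t_1, t_2]$.

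For the geometric step, I observe that $x^\om_0 \ge x_*$ on all of $\rr/n\zz$: on $[0, n/4]$ by monotonicity; on $[3n/4, n]$ by the $g_2$-relation $x^\om_0(n - t) = x^\om_0(t)$; and on $[n/4, 3n/4]$ because the $h_2$-symmetry implies $x^\om_0 \ge 0 \ge x_*$ there. Hence, for $t \in [t_1, t_2]$ and every $i \ne 0$, $x^\om_i(t) - x^\om_0(t) = x^\om_0(t + i) - x_* \ge 0$, and every summand of $F_x(t) = \sum_{i \ne 0} (x^\om_i(t) - x^\om_0(t))/|q^\om_i(t) - q^\om_0(t)|^3$ is non-negative, giving $F_x \ge 0$ on $[t_1, t_2]$. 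Together with the variational inequality, $F_x \equiv 0$ there, and pairing the summands $\{i, i+n\}$ under $h_1$ (which share the $x$-coordinate but carry strictly positive combined coefficient $1/r_i^3 + 1/r_{i+n}^3$) forces $x^\om_0(t + i) = x_*$ for every $i \in \{1, \dots, n - 1\}$ and every $t \in [t_1, t_2]$.

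To close the argument, I analyze the level set $L := \{s \in \rr/n\zz : x^\om_0(s) = x_*\}$. When $x_* < 0$, the $G_n$-symmetries constrain $L$ to be a union of two closed intervals of total measure at most $n/2$, whereas the preceding paragraph requires the $n$ pairwise translates $\{[t_1 + i, t_2 + i] \bmod n\}_{i=0}^{n-1}$ all to lie in $L$; a direct packing count rules this out for $n \ge 3$, and for $n = 2$ the bound $t_2 \le n/4 = 1/2$ forces $t_1 = t_2$ in a short case analysis. When $x_* = 0$, the hypothesis $x^\om_0(0) < 0$ forces $t_1 > 0$, the $h_2$-symmetry enlarges $L$ only to two intervals of total measure $n - 4 t_1 < n$, and the packing argument applies again. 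I expect the most delicate point to be the integration-by-parts computation in the variational step---specifically controlling how the $G_n$-equivariant extension of the perturbation contributes to $D\A_n(\qo)[v]$ from the various symmetry-related copies of the fundamental domain.
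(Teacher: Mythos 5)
There is a genuine gap at the heart of your argument, in the ``geometric positivity'' step. You claim $x^{\om}_0(s)\ge x_*$ for all $s\in\rr/n\zz$, citing monotonicity on $[0,n/4]$; but monotonicity gives the \emph{opposite} inequality on $[0,t_1]$: if the constancy interval is interior, i.e.\ $x^{\om}_0(0)<x_*<0$, then $x^{\om}_0(s)\le x_*$ on $[0,t_1]$ with strict inequality somewhere (and likewise on $[n-t_1,n]$ by the $g_2$-relation). Consequently, for $t\in[t_1,t_2]$ the masses $m_i$ whose phase $t+i \pmod n$ lands near $0$ sit strictly to the \emph{left} of $m_0$, so the corresponding summands $(x^{\om}_i-x^{\om}_0)/|q^{\om}_i-q^{\om}_0|^3$ are negative; worse, such a neighbor can be arbitrarily close to $m_0$ (they may even collide at $t=0$, which has not been excluded at this stage), so its $1/r^3$ weight can dominate the many positive far-away terms. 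Hence $F_x\ge 0$ on $[t_1,t_2]$ does not follow, and the conclusion $F_x\equiv 0$, the $h_1$-pairing, and the final packing count all collapse except in the special case $t_1=0$, $x_*=x^{\om}_0(0)$. A secondary problem is the variational step: at this point of the argument $\qo$ is only a constrained minimizer, possibly with collisions, so you may not assume the $x$-component of Newton's equation off $[t_1,t_2]$ (there may be other flat intervals where the monotonicity constraint is active, and near a collision $\nabla U$ is not integrable), so the reduction of $D\A_n(\qo)[v]\ge 0$ to $\int_{t_1}^{t_2}F_x\phi\,dt\le 0$ is not justified.

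For comparison, the paper's proof never invokes the Euler--Lagrange equation or the sign of the force. It constructs an explicit admissible competitor: the flattened mass $m_k$ (and its symmetry partners) is rigidly translated by $\ep\e_1$ on one side of $[t_1,t_2]$ and linearly interpolated across $[t_1,t_2]$, while whole groups of masses lying entirely to the left or right are rigidly translated by $\mp\ep\e_1$ so that no mutual distance decreases and the $x$- and $y$-monotonicity and topological constraints are preserved. The kinetic energy increases only by $O(\ep^2/(t_2-t_1))$, whereas the distance between $m_0$ and $m_{n/2}$ increases by order $\ep$; here the hypothesis $x^{\om}_0(0)<0$ enters crucially through $x^{\om}_{n/2}(0)-x^{\om}_0(0)=-2x^{\om}_0(0)>0$, which yields a potential-energy decrease of order $\ep$ on a fixed time interval and hence a contradiction with minimality. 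In your scheme the hypothesis only appears at the very end to exclude $x_*=0$, which signals that the mechanism by which it must be used has been lost; to repair your approach you would at least need to handle the interior-constancy case, where the nearest neighbors of $m_0$ lie on its left, and that is precisely the situation your force-sign argument cannot control.
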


\begin{proof}
We give the details for strictly $x$-monotone, while the other is similar. For simplicity, let $q=\qo$. 

First let's assume $n$ is even ($n=2\ell$). Due to the symmetric constraints, $q$ is strictly $x$-monotone if and only if
$$ x_i(t_1) < x_i(t_2), \; \forall 0 \le t_1 < t_2 \le 1/2, \; \forall i \in \{0, \dots, n/2 -1\}. $$
Since $q$ is $x$-monotone, we already have 
$$ x_i(t_1) \le x_i(t_2), \; \forall 0 \le t_1 < t_2 \le 1/2, \; \forall i \in \{0, \dots, n/2 -1\}. $$
By a contradiction argument, let's assume there exist $0 \le t_1 < t_2 \le 1/2$ and $k \in \{0, \dots, n/2-1\}$, such that 
\begin{equation} \label{eqn:xt=xt1}
x_k(t) \equiv x_k(t_1), \;\; \forall t \in [t_1, t_2]. 
\end{equation}
By the symmetric constraints (the action of $h_1$ and $h_2$),
\begin{equation}
\label{eqn:ksym} q_{k+n}(t) = \R_x q_{k}(t), \; q_{k +\en}(t) = \R_z q_{k}(t), \; q_{k +\frac{3}{2}n}(t) = \R_x q_{k+\frac{n}{2}}(t), \;\; \forall t.
\end{equation}
Together with \eqref{eqn:xt=xt1}, they imply 
\begin{equation}
\label{eqn:xit=xit1} x_i(t) \equiv x_i(t_1), \; \forall t \in [t_1, t_2], \; \forall i \in \I_0,
\end{equation}
where $\I_0 = \{ k +\frac{i}{2}n|\; i =0, \dots, 3 \}.$ 

In the following we will find a new path $\qe \in \lmn_{\le, \om}$ with $\A_{1/2}(\qe) < \A_{1/2}(q)$, for $\ep>0$ small enough. This gives us a contradiction. To achieve this, we need some information regarding the relative positions of the masses. Since
$$ x_{n/2}(0) -x_0(0) = x_0(n/2)-x_0(0) = -2 x_0(0) > 0,$$
there exist $t_0, \dl_1, \dl_2 >0$ small enough (independent of $\ep$), such that
\begin{align}
 \label{eqn:dl1} x_{n/2}(t) - x_0(t) \ge \dl_1, \;\; & \forall t \in [0, t_0], \\
 \label{eqn:dl2} |q_{n/2}(t)-q_0(t)|^{-1} \ge \dl_2, \;\; & \forall t \in [0, t_0].
\end{align}

Depending on the value of $k$, two different cases need to be considered. 

\emph{Case 1}: $k \in \{0, \dots, [(n-2)/4] \}$. Choose an arbitrary $\qt \in \lmn_{<, \om}$. Based the strictly monotone property of $\qt$, the following two subsets of indices are well-defined and independent of the choice of $\qt$, 
\begin{align*}
 \I_1^- &= \{ i \in \N | \; \xt_i(t) < \xt_k(0), \;\; \forall t \in (0, 1/2) \};\\
 \I_1^+ & = \{ i \in \N | \; \xt_i(t) > \xt_{k+\en}(0), \;\; \forall t \in (0, 1/2) \}.  
\end{align*}
Now we define a new path $\qe \in \lmn_{\le, \om}$ as follows: 
\begin{align*}
\qe_i(t) & = \begin{cases}
               q_i(t) - \ep \e_1, \;\; & \forall t \in [0, t_1], \\
               q_i(t) + \frac{t-t_2}{t_2-t_1} \ep \e_1, \;\; &\forall t \in [t_1, t_2], \\
               q_i(t), \;\; & \forall t \in [t_2, 1/2],
              \end{cases}
              \;\; && \text{ if } i \in \{k, k +n \}; \\ 
\qe_i(t) & = \begin{cases}
              q_i(t) +\ep \e_1, \;\; & \forall t \in [0, t_1], \\
              q_i(t) + \frac{t_2-t}{t_2-t_1}\ep \e_1, \; \; &\forall t \in [t_1, t_2], \\
              q_i(t), \;\; & \forall t \in [t_2, 1/2],
             \end{cases}
             \;\; && \text{ if } i \in \{ k+\en, k +\frac{3}{2}n \};\\
\qe_i(t) & = \begin{cases}
 			  q_i(t) -\ep \e_1, \;\; & \text{ if } i \in \I_1^-, \\
              q_i(t) +\ep \e_1, \;\; & \text{ if } i \in \I_1^+, \\
 			  q_i(t),  \;\; & \text{ if } i \in \N \setminus (\I_0 \cup \I_1^- \cup \I_1^+),            
            \end{cases}	
            \;\;  && \;\;\forall t \in [0, 1/2].             
\end{align*}
By the definition of $\qe$ and \eqref{eqn:xit=xit1}, 
\begin{equation}
 \label{eqn:K} \begin{split} \int_{0}^{\ey} K(\qd^{\ep}(t)) - K(\qd(t)) \,dt & = \ey \int_{t_1}^{t_2} \sum_{i \in \I_0} ( |\qd^{\ep}_i(t)|^2 - |\qd_i(t)|^2) \,dt \\
                & = 2 \int_{t_1}^{t_2} \frac{\ep^2}{(t_2 -t_1)^2} \,dt = \frac{2}{t_2-t_1}\ep^2.
               \end{split}
\end{equation}

To estimate the change in potential energy, notice that
\begin{equation}
 \label{eqn:qe ge q} |\qe_i(t)-\qe_j(t)| \ge |q_i(t)-q_j(t)|, \;\; \forall t \in [0,1/2], \; \forall \{i \ne j \} \subset \N. 
\end{equation}
When $k >0$, $0 \in \I_1^-$ and $n/2 \in \I_1^+$. By the definition of $\qe$, for any $t \in [0, t_0]$,
\begin{align*}
 |\qe_{\en}(t)- \qe_0(t)|^2 & = (x_{\en}(t)-x_0(t) + 2\ep)^2 + (y_{\en}(t) - y_0(t))^2 + (z_{\en}(t) -z_0(t))^2 \\ 
 & = |q_{\en}(t)- q_0(t)|^2 + 4(x_{\en}(t)-x_0(t))\ep +4\ep^2 \\
 & \ge |q_{\en}(t)- q_0(t)|^2 + 4\dl_1 \ep +4\ep^2,
\end{align*}
where the last inequality follows from \eqref{eqn:dl1}. Combine this with \eqref{eqn:dl2}, we get
\begin{align*}
 |\qe_{\en}(t)&-\qe_0(t)|^{-1} - |q_{\en}(t) -q_0(t)|^{-1}\\
 & \le \frac{1}{|q_{\en}(t)-q_0(t)|} \Big[ \big(1 + \frac{4\dl_1 \ep}{|q_{\en}(t)-q_0(t)|^2}+\frac{4\ep^2}{|q_{\en}(t)-q_0(t)|^2} \big)^{-\ey} -1 \Big] \\
 & \le -\frac{2\dl_1 \ep}{|q_{\en}(t)-q_0(t)|^3}+o(\ep) \le -2 \dl_1 \dl_2^3 \ep + o(\ep). 
\end{align*}
Meanwhile \eqref{eqn:qe ge q} implies 
\begin{equation*}
 \int_{0}^{\ey} U(\qe(t)) - U(q(t)) \,dt \le \int_{0}^{t_0} |\qe_{n/2}(t)-\qe_0(t)|^{-1} - |q_{n/2}(t) - q_0(t)|^{-1} \,dt. 
\end{equation*}
As a result,
\begin{equation}
 \label{eqn:U1} \int_0^{\ey}U(\qe)- U(q)\,dt \le \int_0^{t_0} -2\dl_1 \dl_2^3 \ep +o(\ep)\,dt = -C_1(\dl_1, \dl_2, t_0)\ep + o(\ep). 
\end{equation}
When $k=0$, $\I_1^-= \I_1^+ = \emptyset.$ However if $t_0 \le t_1$, all the above estimates still hold for any $t \in [0, t_0]$, and so is \eqref{eqn:U1}. On the other hand, if $t_0 >t_1$, then for any $t \in [t_1, t_3]$ ($t_3= \min\{t_0, t_2\})$, 
\begin{align*}
 |\qe_{\en}(t)- \qe_0(t)|^2 & = (x_{\en}(t)-x_0(t) + 2\frac{t-t_1}{t_2-t_1}\ep)^2 + (y_{\en}(t) - y_0(t))^2 + (z_{\en}(t) -z_0(t))^2 \\ 
 & = |q_{\en}(t)- q_0(t)|^2 + 4(x_{\en}(t)-x_0(t))\frac{t-t_1}{t_2-t_1}\ep +4(\frac{t-t_1}{t_2-t_1})^2 \ep^2 \\
 & \ge |q_{\en}(t)- q_0(t)|^2 + 4\dl_1\ep  \frac{t-t_1}{t_2-t_1}+o(\ep).
\end{align*}
Combining this with \eqref{eqn:dl2}, we get
\begin{align*}
 |\qe_{\en}(t)&-\qe_0(t)|^{-1} - |q_{\en}(t) -q_0(t)|^{-1}\\
 & \le \frac{1}{|q_{\en}(t)-q_0(t)|} \Big[ \big(1 + \frac{4\dl_1 \ep \frac{t-t_1}{t_2-t_1}}{|q_{\en}(t)-q_0(t)|^2}+o(\ep) \big)^{-\ey} -1 \Big] \\
 & \le -\frac{2\dl_1 \ep  \frac{t-t_1}{t_2-t_1}}{|q_{\en}(t)-q_0(t)|^3}+o(\ep) \le -2 \dl_1 \dl_2^3 \ep \frac{t-t_1}{t_2-t_1} + o(\ep). 
\end{align*}
By \eqref{eqn:qe ge q},
\begin{equation}
 \label{eqn:U2} \begin{split}
 \int_0^{\ey} U(\qe(t)) - U(q(t)) \,dt & \le \int_{t_1}^{t_3} |\qe_{n/2}(t)-\qe_0(t)|^{-1} - |q_{n/2}(t) - q_0(t)|^{-1} \,dt, \\
                 & \le \int_{t_1}^{t_3} -2 \dl_1 \dl_2^3 \ep \frac{t-t_1}{t_2-t_1} + o(\ep) \,dt \\
                  & \le -C_2(\dl_1, \dl_2, t_0, t_1, t_2) \ep +o(\ep). 
                \end{split}
\end{equation}
By the above estimates, in particular \eqref{eqn:K}, \eqref{eqn:U1} and \eqref{eqn:U2}, we get 
$$ \A_{1/2}(\qe) - \A_{1/2}(q) \le -C_3(\dl_1, \dl_2, t_0, t_1, t_2)\ep + o(\ep) < 0$$
for $\ep>0$ small enough. This contradicts the fact that $q$ is an action minimizer in $\lmn_{\om, \le}$ and finishes our proof of \emph{Case 1}.   

\emph{Case 2}: $k \in \{[(n+2)/4], \dots, n/2-1 \}$. Let
\begin{align*}
 \I_2^- &= \{ i \in \N | \; \xt_i(t) < \xt_{k+\en}(1/2), \;\; \forall t \in (0, 1/2) \};\\
 \I_2^+ & = \{ i \in \N | \; \xt_i(t) > \xt_{k}(1/2), \;\; \forall t \in (0, 1/2) \}, 
\end{align*}
and $\qe \in \lmn_{\le, \om}$ be defined as follows:
\begin{align*}
\qe_i(t) & = \begin{cases}
               q_i(t), \;\; & \forall t \in [0, t_1], \\
               q_i(t) + \frac{t-t_1}{t_2-t_1} \ep \e_1, \;\; &\forall t \in [t_1, t_2], \\
               q_i(t)+\ep \e_1, \;\; & \forall t \in [t_2, 1/2],
              \end{cases}
              \;\; && \text{ if } i \in \{k, k +n \}; \\ 
\qe_i(t) & = \begin{cases}
              q_i(t), \;\; & \forall t \in [0, t_1], \\
              q_i(t) + \frac{t_1 -t}{t_2-t_1}\ep \e_1, \; \; &\forall t \in [t_1, t_2], \\
              q_i(t)-\ep \e_1, \;\; & \forall t \in [t_2, 1/2],
             \end{cases}
             \;\; && \text{ if } i \in \{ k+\en, k +\frac{3}{2}n \};\\
\qe_i(t) & = \begin{cases}
 			  q_i(t) -\ep \e_1, \;\; & \text{ if } i \in \I_2^-, \\
              q_i(t) +\ep \e_1, \;\; & \text{ if } i \in \I_2^+, \\
 			  q_i(t),  \;\; & \text{ if } i \in \N \setminus (\I_0 \cup \I_2^- \cup \I_2^+),            
            \end{cases}	
            \;\;  && \;\;\forall t \in [0, 1/2].             
\end{align*}

By the same argument given in \emph{Case 1}, $\A_{1/2}(\qe) < \A_{1/2}(q)$, for $\ep >0$ small enough, which is absurd. This finishes our proof for even $n$. 

Now let's assume $n$ is odd ($n = 2\ell+1$). Because of the symmetric constraints, $[0, 1/4]$ is a fundamental domain and $q$ is strictly $x$-monotone if and only if 
$$ x_i(t_1) < x_i(t_2), \; \forall 0 \le t_1 < t_2 \le 1/4, \; \forall i \in \{0, \dots, n-1\}.$$ 
Like before as $q$ is $x$-monotone, we already have 
$$ x_i(t_1) \le x_i(t_2), \; \forall 0 \le t_1 < t_2 \le 1/4, \; \forall i \in \{0, \dots, n-1\}.$$ 
Again by a contradiction argument, let's assume there exist $0 \le t_1 < t_2 \le 1/4$ and $k \in \{0, \dots, n-1\}$, satisfying \eqref{eqn:xt=xt1}. Then by the action of $h_1$
\begin{equation}
\label{eqn:ksym odd} q_{k+n}(t) = \R_x q_k(t), \;\; \forall t.
\end{equation}
If we let $\I_0= \{k, k +n\}$ for the rest of the proof, then
\begin{equation}
\label{eqn:xit=xit1 odd} x_i(t) \equiv x_i(t_1), \; \forall t \in [t_1, t_2], \; \forall i \in \I_0. 
\end{equation}
Just like above, we will find a new path $\qt \in \lmn_{\le, \om}$ with $\A_{1/4}(\qe) < \A_{1/4}(q)$, for $\ep >0$ small enough and reach a contradiction. The relative positions of the masses will still be needed to estimate the change in potential energy. Since 
$$ x_0(0) < 0, \quad x_{[n/2]}(0) \ge x_0(n/4)=0,$$
there exist $t_0, \dl_3, \dl_4 >0$ small enough (independent of $\ep$), such that 
\begin{align}
\label{eqn:dl3} x_{[n/2]}(t) - x_0(t) \ge \dl_3, \;\; & t \in [0, t_0], \\
\label{eqn:dl4} |q_{[n/2]}(t) - q_0(t)|^{-1} \ge \dl_4, \;\; & t \in [0, t_0].
\end{align}

Depending on the value of $k$, four different cases will be considered (to distinguish from the previous two, we will count from 3).  

\emph{Case 3}: $k \in \{0, \dots, [n/4] \}$. Let $\qt$ be the same as above, we set
\begin{align*}
 \I_3^- &= \{ i \in \N | \; \xt_i(t) < \xt_k(0), \;\; \forall t \in (0, 1/4) \};\\
 \I_3^+ & = \{ i \in \N | \; \xt_i(t) > \xt_{[\en]-k}(1/4), \;\; \forall t \in (0, 1/4) \}, 
\end{align*}
Now define a $\qe \in \lmn_{\le, \om}$ as follows:
\begin{align*}
\qe_i(t) & = \begin{cases}
               q_i(t) - \ep \e_1, \;\; & \forall t \in [0, t_1], \\
               q_i(t) + \frac{t-t_2}{t_2-t_1} \ep \e_1, \;\; &\forall t \in [t_1, t_2], \\
               q_i(t), \;\; & \forall t \in [t_2, 1/4],
              \end{cases}
              \;\; && \text{ if } i \in \{k, k +n \}; \\ 
\qe_i(t) & = \begin{cases}
 			  q_i(t) -\ep \e_1, \;\; & \text{ if } i \in \I_3^-, \\
              q_i(t) +\ep \e_1, \;\; & \text{ if } i \in \I_3^+, \\
 			  q_i(t),  \;\; & \text{ if } i \in \N \setminus (\I_0 \cup \I_3^- \cup \I_3^+),            
            \end{cases}	
            \;\;  && \;\;\forall t \in [0, 1/4].             
\end{align*}
By the above definition of $\qe$ and \eqref{eqn:xit=xit1 odd}, 
\begin{equation}
\label{eqn:K odd} \begin{split} \int_{0}^{\frac{1}{4}} K(\qd^{\ep}(t)) - K(\qd(t)) \,dt & = \ey \int_{t_1}^{t_2} \sum_{i \in \I_0} ( |\qd^{\ep}_i(t)|^2 - |\qd_i(t)|^2) \,dt \\
                & = \int_{t_1}^{t_2} \frac{\ep^2}{(t_2 -t_1)^2} \,dt = \frac{\ep^2}{t_2-t_1}.
               \end{split}
\end{equation}
Meanwhile \eqref{eqn:qe ge q} still holds after replacing $[0, 1/2]$ by $[0,1/4]$. With \eqref{eqn:dl3} and \eqref{eqn:dl4}, by computations similar to those given in \emph{Case 1}, we get
\begin{equation}
\label{eqn:U odd} \int_0^{\frac{1}{4}} U(\qe) - U(q)\,dt \le -C_4(\dl_3, \dl_4, t_0, t_1, t_2) \ep + o(\ep).
\end{equation}
By \eqref{eqn:K odd} and \eqref{eqn:U odd}, $\A_{1/4}(\qe) < \A_{1/4}(q)$, for $\ep>0$ small enough, which is absurd. This finishes our proof of \emph{Case 3}. 

The proofs of the remaining cases are similar. We just give the definition of $\qe \in \lmn_{\le, \om}$ in each one and omit details.  

\emph{Case 4}: $k \in \{[n/4], \dots, [(n-1)/2] \}.$ Let
\begin{align*}
 \I_4^- &= \{ i \in \N | \; \xt_i(t) < \xt_{[\en]-k}(0), \;\; \forall t \in (0, 1/4) \};\\
 \I_4^+ & = \{ i \in \N | \; \xt_i(t) > \xt_{k}(1/4), \;\; \forall t \in (0, 1/4) \},  
\end{align*}
and
\begin{align*}
\qe_i(t) & = \begin{cases}
               q_i(t), \;\; & \forall t \in [0, t_1], \\
               q_i(t) + \frac{t-t_1}{t_2-t_1} \ep \e_1, \;\; &\forall t \in [t_1, t_2], \\
               q_i(t)+\ep \e_1, \;\; & \forall t \in [t_2, 1/4],
              \end{cases}
              \;\; && \text{ if } i \in \{k, k +n \}; \\ 
\qe_i(t) & = \begin{cases}
 			  q_i(t) -\ep \e_1, \;\; & \text{ if } i \in \I_4^-, \\
              q_i(t) +\ep \e_1, \;\; & \text{ if } i \in \I_4^+, \\
 			  q_i(t),  \;\; & \text{ if } i \in \N \setminus (\I_0 \cup \I_4^- \cup \I_4^+),            
            \end{cases}	
            \;\;  && \;\;\forall t \in [0, 1/4].             
\end{align*}
\emph{Case 5}: $k \in \{[(n+1)/2], \dots, [3n/4] \}$. Let
\begin{align*}
 \I_5^- &= \{ i \in \N | \; \xt_i(t) < \xt_{[\frac{3}{2}n]-k}(1/4), \;\; \forall t \in (0, 1/4) \};\\
 \I_5^+ & = \{ i \in \N | \; \xt_i(t) > \xt_{k}(0), \;\; \forall t \in (0, 1/4) \},  
\end{align*}
and
\begin{align*}
\qe_i(t) & = \begin{cases}
               q_i(t) + \ep \e_1, \;\; & \forall t \in [0, t_1], \\
               q_i(t) + \frac{t_2-t}{t_2-t_1} \ep \e_1, \;\; &\forall t \in [t_1, t_2], \\
               q_i(t), \;\; & \forall t \in [t_2, 1/4],
              \end{cases}
              \;\; && \text{ if } i \in \{k, k +n \}; \\ 
\qe_i(t) & = \begin{cases}
 			  q_i(t) -\ep \e_1, \;\; & \text{ if } i \in \I_5^-, \\
              q_i(t) +\ep \e_1, \;\; & \text{ if } i \in \I_5^+, \\
 			  q_i(t),  \;\; & \text{ if } i \in \N \setminus (\I_0 \cup \I_5^- \cup \I_5^+),            
            \end{cases}	
            \;\;  && \;\;\forall t \in [0, 1/4].             
\end{align*}
\emph{Case 6}: $k \in \{ [3n/4]+1, \dots, n-1\}.$ Let
\begin{align*}
 \I_6^- &= \{ i \in \N | \; \xt_i(t) < \xt_k(1/4), \;\; \forall t \in (0, 1/4) \};\\
 \I_6^+ & = \{ i \in \N | \; \xt_i(t) > \xt_{[\frac{3}{2}n]-k}(0), \;\; \forall t \in (0, 1/4) \},  
\end{align*}
and
\begin{align*}
\qe_i(t) & = \begin{cases}
               q_i(t), \;\; & \forall t \in [0, t_1], \\
               q_i(t) + \frac{t_1-t}{t_2-t_1} \ep \e_1, \;\; &\forall t \in [t_1, t_2], \\
               q_i(t)-\ep \e_1, \;\; & \forall t \in [t_2, 1/4],
              \end{cases}
              \;\; && \text{ if } i \in \{k, k +n \}; \\ 
\qe_i(t) & = \begin{cases}
 			  q_i(t) -\ep \e_1, \;\; & \text{ if } i \in \I_6^-, \\
              q_i(t) +\ep \e_1, \;\; & \text{ if } i \in \I_6^+, \\
 			  q_i(t),  \;\; & \text{ if } i \in \N \setminus (\I_0 \cup \I_6^- \cup \I_6^+),            
            \end{cases}	
            \;\;  && \;\;\forall t \in [0, 1/4].             
\end{align*}
\end{proof}

The conditions required by Proposition \ref{prop:smc} will be established by the following lemma. We point out that condition \eqref{eqn:odd om} in Theorem \ref{thm:main} has not been used so far and it comes into the picture through the following lemma.

\begin{lm}
\label{lm:nondeg} If $\om$ satisfies \eqref{eqn:odd om}, then $x^{\om}_0(0) < 0 < y_0^{\om}(n/4).$
\end{lm}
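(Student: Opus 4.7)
I argue by contradiction and treat the two inequalities symmetrically. Suppose $x_0^{\om}(0) = 0$. Definition \ref{dfn:lmn}(i) says $x_0^{\om}(t)$ is non-decreasing on $[0, n/4]$, and \eqref{eqn:q0symmetry} gives $x_0^{\om}(n/4) = 0$; hence $x_0^{\om} \equiv 0$ on $[0, n/4]$. Since $[0, n/4]$ is a fundamental domain for the $G_n$-action on $q_0$, this extends to $x_0^{\om} \equiv 0$ on $\rr/n\zz$. Applying \eqref{eqn:g1} gives $x_i^{\om} \equiv 0$ for $i \in \{0,\dots,n-1\}$, and \eqref{eqn:h1} covers $i \in \{n,\dots,2n-1\}$ because $\R_x$ preserves the $x$-coordinate, so the whole loop $\qo$ lies in the $yz$-plane. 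The parallel argument, with $x$ and $y$ swapped, shows $y_0^{\om}(n/4) = 0$ would confine $\qo$ to the $xz$-plane.

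\textbf{Constructing the competitor.} The strategy is to build an admissible perturbation $\qe \in \lmn_{\le,\om}$ of $\qo$ that tilts the loop transversally out of the coordinate plane and to show $\A_n(\qe) < \A_n(\qo)$. For the $yz$-plane case I take $x_0^{\ep}(t) = -\ep\cos(2\pi t/n)$ and propagate via the action of $g_1, g_2, h_1, h_2$. A direct check shows $f(t) = -\cos(2\pi t/n)$ obeys the symmetries forced on $x_0$: $f(n-t)=f(t)$ from $g_1 g_2$, together with $f(\en + t) = -f(t)$ for even $n$ or $f(\en - t) = -f(t)$ for odd $n$ from $g_1 h_2$. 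Since $f$ is strictly increasing on $[0, n/4]$ with $f(n/4)=0$, $x$-monotonicity is preserved; the $y$- and $z$-components of $\qo$ are untouched, so $\qe \in \lmn_{\le,\om}$. The $xz$-plane case uses instead $y_0^{\ep}(t) = \ep \sin(2\pi t/n)$: for odd $n$ the $g_1 h_2$-relation on $y_0$ is $f(\en - t) = f(t)$ (because $\R_{yz}$ fixes the $y$-direction), which sine satisfies; for even $n$ one uses $f(\en + t) = -f(t)$, which sine also satisfies.

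\textbf{Sign of the second variation.} Since $\qo$ sits in a coordinate plane, both $K$ and $U$ are even in $\ep$ under the normal perturbation (they depend only on squared perturbations), so the first-order variation of $\A_n$ vanishes and the sign of $\A_n(\qe) - \A_n(\qo)$ is determined by the quadratic form
\[
Q(f) = \int_0^n \sum_{i} m_i \dot f_i^2\,dt \;-\; \int_0^n \sum_{i<j} m_i m_j\,\frac{(f_i - f_j)^2}{|q_i^{\om} - q_j^{\om}|^3}\,dt.
\]
The kinetic integral evaluates to an explicit constant in $n$ (using $\sum_{i=0}^{n-1}\sin^2(2\pi(t+i)/n)$ is constant after integration over $[0,n]$). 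The potential integral is made singularly large by the $\om$-topological data: because $\om \in \omn$ admits $\om_{i_1}\neq \om_{i_2}$ for some $i_1<i_2$ in $\{0,\dots,[n/2]\}$, the scalar $z_0^{\om}$ must change sign on the discrete set $\{i/2\}$, producing at least one instant $t_{*}$ with $z_0^{\om}(t_{*}) = 0$; combined with the planarity of $\qo$, certain mass pairs then cluster at $t_{*}$, so $|q_i^{\om} - q_j^{\om}|^{-3}$ blows up while the test function keeps $f_i - f_j$ bounded away from $0$. Using the Sundman-type asymptotics (Propositions \ref{prop:sundman} and \ref{prop:angle}) to quantify this blow-up, one obtains $Q(f) < 0$ and hence a contradiction to the minimality of $\qo$. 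For odd $n$ in the $xz$-plane subcase, the test function $\sin(2\pi t/n)$ vanishes at $t_{*} = 0$, so the required sign change of $z_0^{\om}$ must occur at some $t_{*} = i/2$ with $i \ge 1$, which is exactly the content of hypothesis \eqref{eqn:odd om}.

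\textbf{Main obstacle.} The delicate step will be the quantitative comparison of the singular potential contribution to the kinetic cost. One must combine the planar-limit Sundman asymptotics with careful bookkeeping of which index pairs are driven together by the $\om$-data, and verify that the test function preserves enough separation among them to beat the explicit kinetic bound uniformly. I would expect this to be isolated as the auxiliary Lemma \ref{lm:B3} alluded to in the remarks preceding the theorem, and the exclusion of odd $n$ under failure of \eqref{eqn:odd om} to manifest precisely as the nodal set of $\sin(2\pi t/n)$ containing the only available sign-change time $t_{*}=0$.
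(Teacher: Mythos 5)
There is a genuine gap, and it sits exactly under the first inequality. Your entire argument that $\A_n(\qe)<\A_n(\qo)$ rests on the planar minimizer having a collision, and your mechanism for producing one is ``$z_0^{\om}$ has a zero $t_*$, hence by planarity some pair clusters at $t_*$''. That works only in the $xz$-plane case: there $y\equiv 0$, so $z_0(t_*)=0$ puts $q_0(t_*)$ on the $x$-axis and makes it coincide with its $\R_x$-partner $q_n(t_*)$ (this is precisely the paper's Lemma \ref{lm:B3}). In the $yz$-plane case (the degenerate scenario for $x_0^{\om}(0)=0$), the $\R_x$-partners are $(0,y_0,z_0)$ and $(0,-y_0,-z_0)$, so a zero of $z_0$ alone gives no collision unless $y_0$ vanishes at the same instant, which nothing forces. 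The paper's Lemma \ref{lm:B1} produces the collision there by a different route — \eqref{eqn:q0symmetry} gives $y_0(0)=0$, and the $h_2$-symmetry then forces $q_0(0)=q_{n/2}(0)$ on the $z$-axis for even $n$ (resp.\ $q_0(1/4)=q_{\ell}(1/4)$ for odd $n$) — with no topological input at all, which is why $x_0^{\om}(0)<0$ holds for every $\om\in\Om_{[n/2]}$. Without a collision your claim $Q(f)<0$ has no support whatsoever; deciding planar versus spatial for a collision-free minimizer is exactly the hard open issue recalled in the introduction, so the first inequality is not proved by your argument as written. (A smaller inaccuracy in the same step: in $\lmn_{\le,\om}$ the constrained values $z_0(i/2)$ may vanish, cf.\ Remark \ref{rem:monotone}, so ``must change sign'' needs the harmless amendment ``changes sign or one constrained value is already zero''.)

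The second, more repairable, gap is the second-variation framing. At a collision the quantity you call $Q(f)$ is $-\infty$ (the pairwise term $|q_i^{\om}-q_j^{\om}|^{-3}\sim t^{-2}$ is not integrable), and one cannot conclude $\A_n(\qe)<\A_n(\qo)$ from ``first variation vanishes and the quadratic form is negative'': the quadratic expansion has no controlled remainder there. What is needed — and what the paper's Lemmas \ref{lm:dfm4}--\ref{lm:dfm6} supply — is a direct comparison: since your perturbation is normal to the plane, every mutual distance is pointwise non-decreased, the kinetic cost is exactly $O(\ep^2)$, and Sundman's estimate $r(t)\sim C t^{2/3}$ gives a potential gain of order $\ep^{1/2}$ on the interval $t\lesssim\ep^{3/2}$ near the collision, which wins for small $\ep$. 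To invoke those asymptotics you must also justify that the constrained planar minimizer is a generalized solution and that the collision can be taken isolated (the paper does this at the start of Lemma \ref{lm:B1} and via Lemmas \ref{lm: isolated collision even}--\ref{lm: isolated collision odd}); you use Propositions \ref{prop:sundman}--\ref{prop:angle} without addressing either point. On the positive side, your single global equivariant perturbation ($-\ep\cos(2\pi t/n)$ in $x$, $\ep\sin(2\pi t/n)$ in $y$) does satisfy the symmetry, monotonicity and $\om$-constraints, and would pleasantly avoid the paper's corrective re-deformations after the local perturbations — but only once the collision-existence step and the direct action estimate are fixed as above.
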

This lemma shows $\qo$ can't be contained in the $yz$ or $xz$-plane. A proof of it will be given in Section \ref{sec:app}. Proposition \ref{prop:smc} and Lemma \ref{lm:nondeg} immediately imply 

\begin{cor}
\label{cor: strictly x y monotone}  If $\om$ satisfies \eqref{eqn:odd om}, then $\qo \in \lmn_{<, \om}$.
\end{cor}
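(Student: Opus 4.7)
The plan is essentially to compose the two preceding results, with no intermediate steps of substance. First I would invoke Lemma \ref{lm:nondeg}, whose hypothesis is exactly the condition \eqref{eqn:odd om} assumed in the corollary, to extract the two strict inequalities $x_0^{\om}(0) < 0$ and $y_0^{\om}(n/4) > 0$. Geometrically these say that the minimizer $\qo$ does not collapse into the $yz$-plane or into the $xz$-plane, so the ``non-degeneracy'' of the loop in both horizontal coordinates is secured before anything else.

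Second, I would feed these inequalities directly into Proposition \ref{prop:smc}: the bound $x_0^{\om}(0) < 0$ is precisely the hypothesis that upgrades $x$-monotonicity to strict $x$-monotonicity, and symmetrically $y_0^{\om}(n/4) > 0$ upgrades $y$-monotonicity to strict $y$-monotonicity. Since Proposition \ref{prop:coer} has already placed $\qo$ in $\lmn_{\le,\om}$, it is automatically $x$- and $y$-monotone and satisfies the $\om$-topological constraints \eqref{eqn:tc} (which in particular force $z_0^{\om}(i/2)\ne 0$ for $i=0,\dots,[n/2]$). Combining the strict monotonicity upgrades with the topological constraints then matches the defining conditions in Definitions \ref{dfn:smc} and \ref{dfn:tc}, so $\qo \in \lmn_{<,\om}$.

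There is no genuine obstacle at this step; both ingredients have been prepared, and the corollary is just their conjunction. The only conceptually non-trivial input — the exclusion of the degenerate planar cases — has been deferred to Section \ref{sec:app} inside Lemma \ref{lm:nondeg}, and it is there, not here, that the assumption \eqref{eqn:odd om} does real work.
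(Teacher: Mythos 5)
Your proposal is correct and is exactly the paper's argument: Lemma \ref{lm:nondeg} supplies $x_0^{\om}(0)<0<y_0^{\om}(n/4)$ under condition \eqref{eqn:odd om}, and Proposition \ref{prop:smc} then upgrades the $x$- and $y$-monotonicity of the minimizer produced by Proposition \ref{prop:coer} to strict monotonicity, which together with the $\om$-topological constraints gives $\qo\in\lmn_{<,\om}$. One small correction to your parenthetical: the constraints \eqref{eqn:tc} alone do not force $z_0^{\om}(i/2)\ne 0$ (see Remark \ref{rem:monotone}(b)); collision-freeness is not part of this corollary and is only established later, in Lemmas \ref{lm:coll-free even} and \ref{lm:odd}.
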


By the above corollary, $\qo$ is strictly $x$ and $y$-monotone. However because of the monotone constraints, right now we can't say $\qo(t)$ satisfies equation \eqref{eqn:nbody}, even if it is collision-free. For that we need the following lemma.   
\begin{lm}
\label{lm: x y dot >0} If $\qo \in \lmn_{<, \om}$ is collision-free, then it satisfies \eqref{eq: x dot >0} and \eqref{eq: y dot >0} in Theorem \ref{thm:main}. 
\end{lm}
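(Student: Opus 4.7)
The plan is to deduce the boundary equalities $\xd_0^{\om}(0) = 0$ and $\yd_0^{\om}(n/4) = 0$ directly from the equivariance, and then to obtain the strict interior positivity by a contradiction argument based on Newton's equation in the fundamental domain.

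For the boundary equalities, differentiate \eqref{eqn:g1g2} at $t=0$ to get $\qd_0^{\om}(0) = -\R_{xz}\,\qd_0^{\om}(0)$, whose $+1$-eigenspace is $\rr\e_2$, forcing $\xd_0^{\om}(0) = \zd_0^{\om}(0) = 0$. Combining \eqref{eqn:g1g2} with \eqref{eqn:g1h2} for either parity of $n$ produces the identity $q_0^{\om}(t) = \R_{yz}\,q_0^{\om}(n/2 - t)$, and differentiating at $t = n/4$ gives $\qd_0^{\om}(n/4) \in \rr\e_1$; hence $\yd_0^{\om}(n/4) = \zd_0^{\om}(n/4) = 0$.

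For the strict inequalities (I focus on $\xd_0^{\om}$; the argument for $\yd_0^{\om}$ is symmetric), suppose for contradiction that $\xd_0^{\om}(t^*) = 0$ for some $t^* \in (0, n/4]$. Since $\qo$ is collision-free and strictly $x$- and $y$-monotone, on any open subinterval $I \subset (0, n/4)$ on which $\xd_0^{\om}$ and $\yd_0^{\om}$ are bounded below by a positive constant, $H^1$-small symmetric variations of $\qo$ remain in $\lmn_{\le,\om}$; minimality then gives Newton's equation \eqref{eqn:nbody} on each such $I$, which by ODE continuation in the collision-free regime extends to a $C^2$ Newtonian solution on a full neighborhood of $t^*$ in $[0, n/4]$. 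For $t^* \in (0, n/4)$, strict $x$-monotonicity combined with $\xd_0^{\om}(t^*) = 0$ precludes a local extremum of $x_0^{\om}$, so $\ddot{x}_0^{\om}(t^*) = 0$, and Newton's equation yields
\[
\sum_{j=1}^{2n-1} \frac{x_j^{\om}(t^*) - x_0^{\om}(t^*)}{|q_j^{\om}(t^*) - q_0^{\om}(t^*)|^{3}} = 0.
\]
The $h_1$-pairing \eqref{eqn:h1} gives $x_{j+n}^{\om} = x_j^{\om}$ and the $j = n$ term vanishes, reducing this to a positively weighted sum over $j \in \{1, \ldots, n-1\}$. Using the symmetry-induced evenness $x_0^{\om}(n - t) = x_0^{\om}(t)$ and anti-symmetry $x_0^{\om}(n/2 + t) = -x_0^{\om}(t)$, together with strict monotonicity on $[0, n/4]$, one identifies enough indices $j$ with $x_j^{\om}(t^*) > x_0^{\om}(t^*)$ and weighs the positive against the negative contributions to force the sum to be strictly positive, yielding the contradiction. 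The boundary case $t^* = n/4$ uses $\qd_0^{\om}(n/4) = 0$ (from the first part combined with the assumption) and the symmetry-derived $\ddot{x}_0^{\om}(n/4) = 0$; strict monotonicity forces $\dddot{x}_0^{\om}(n/4) > 0$, which is then compared with the expression for $\dddot{x}_0^{\om}(n/4)$ obtained by differentiating Newton's equation and substituting the values of $\qd_j^{\om}(n/4)$ constrained by the $\R_{yz}$-pairing $q_j^{\om}(n/4) = \R_{yz}\,q_{n-j}^{\om}(n/4)$, to reach a contradiction.

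The main obstacle I anticipate is the quantitative sign analysis of the force sum in the interior case, where the positive contributions indexed by $j$ must be weighed against the negative ones using the precise profile of $x_0^{\om}$ on $[0, n]$ implied by the equivariance; the boundary case $t^* = n/4$ requires an analogous, higher-order analysis at the level of third derivatives.
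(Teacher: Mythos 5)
Your strategy inverts the logical order that the statement is designed to support, and this creates a genuine gap. The paper is explicit that, because of the monotone constraints, one cannot assert that $\qo$ satisfies \eqref{eqn:nbody} before this lemma is proved: at a time $t^*$ with $\xd^{\om}_0(t^*)=0$ the monotone constraint is active, so minimality only yields a variational inequality there, and the Euler--Lagrange equation is available only on the open set where $\xd^{\om}_0$ and $\yd^{\om}_0$ are strictly positive. Your attempt to bridge this by ``ODE continuation'' does not close the hole: strict monotonicity only says the zero set of $\xd^{\om}_0$ has empty interior (it could still be an uncountable closed set), and even if $t^*$ were an isolated zero, extending the equation across $t^*$ requires $C^1$ regularity of $\qo$ at $t^*$, which is exactly the kind of information that is not known before the lemma (the paper itself has to argue separately that no corner can occur). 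So the claim that $\qo$ is a $C^2$ Newtonian solution on a full neighborhood of $t^*$ is unjustified, and with it the identity $\ddot{x}^{\om}_0(t^*)=0$ on which everything rests.

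Even granting the equation near $t^*$, the decisive step --- that the net $x$-component of the force on $m_0$ at $t^*$, i.e. $\sum_{j}\big(x^{\om}_j(t^*)-x^{\om}_0(t^*)\big)|q^{\om}_j(t^*)-q^{\om}_0(t^*)|^{-3}$, is strictly positive --- is asserted, not proved, and it does not follow from the qualitative monotonicity and symmetry data alone: the masses immediately behind $m_0$ along the loop (indices $j$ near $n-1$, for which $x^{\om}_j(t^*)<x^{\om}_0(t^*)$ once $t^*>1/2$) may be arbitrarily close to $m_0$, so their negative contributions carry the largest weights, and nothing in the constraints controls them against the positive terms. This is not a technicality but the heart of your argument, and you flag it yourself as an open obstacle; likewise, at $t^*=n/4$ strict monotonicity only gives $\dddot{x}^{\om}_0(n/4)\ge 0$, not $>0$, so the boundary case fails as stated. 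The paper avoids all of this by a purely variational argument: assuming $\xd^{\om}_0(t_0)=0$, it builds an explicit competitor in $\lmn_{\le,\om}$ by shifting $q^{\om}_0$ by $\mp\ep^2\e_1$ on either side of $t_0$ with a transition layer of width $\ep$; the estimate $|\xd^{\om}_0(t)|\le C|t-t_0|$ makes the kinetic cost $O(\ep^3)$, while some pair of mutual distances increases by order $\ep^2$ over a time interval of fixed length, so the potential term drops by $O(\ep^2)$, contradicting minimality without ever invoking the equations of motion. If you want to salvage your route, you would first need an independent proof of the Euler--Lagrange equation and $C^1$ regularity at constrained times, and then a quantitative lower bound on the force sum, neither of which is currently in place.
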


\begin{proof}
We give a proof of \eqref{eq: x dot >0}, while \eqref{eq: y dot >0} can be proven similarly. For simplicity let $q= \qo$. Because $q$ is an action minimizer of the action functional $\A$ in $\lmn_{\le, \om}$, $\xd_0(t)$ exists for any $t$. Otherwise the path of $m_0$ has a corner at $x_0(t)$, which prevents $q$ being a minimizer. 

Because of the minimizing property of $q$, the first variation of $\A$ vanishes at $q$ and this implies the path of $m_0$ must hit the $x$-axis orthogonally at $t=0$, which means $\xd_0(0)=0$. For the rest of \eqref{eq: x dot >0}, first by the monotone constraints, $\xd_0(t) \ge 0$, for any $t \in (0, n/4]$. By a monotone constraint, let's assume $\xd_0(t_0)=0$, for some $t_0 \in (0, n/4]$. Under such an assumption, we will show that, for $\ep>0$ small enough, there is another loop $\qe \in \lmn_{\le, \om}$ with $\A_{1/2}(\qe) < \A_{1/2}(q)$, which is a contradiction. 

First let's assume $t_0 \in (0, n/4)$. Recall that a loop $\qt \in \lmn_{\le, \om}$ is uniquely determined through symmetries, once $\qt_0(t), t \in [0, n/4]$ is defined. Hence for $\ep>0$ small enough, we define the loop $\qe \in \lmn_{\le, \om}$ by defining $\qe_0(t)$, $t \in [0, n/4]$ as follows:
\begin{equation*}
  \qe_0(t) = \begin{cases}
  q(t)- \ep^2 \e_1, \;\; & \forall t \in [0, t_0-\ep], \\
  q(t)+(t-t_0)(2\ep-|t-t_0|), \;\; & \forall t \in [t_0-\ep, t_0 +\ep], \\
  q(t)+\ep^2 \e_1, \;\; & \forall t \in [t_0+\ep, n/4]. 
  \end{cases}
  \end{equation*}  
Actually we need to further shift $\qe_0$ to the left by $\ep^2$ along the $x$-axis (we will still denote it by $\qe_0$). This is to ensure $x_0(n/4)=0$. Since $\xd_0(t_0)=0$, we have 
\begin{equation*}
|\xd_0(t)| \le C|t-t_0|, \; \text{ for } |t-t_0| \text{ small enough}. 
\end{equation*}
As a result, 
\begin{align*}
\int_0^{1/2} K(\qd^{\ep})- K(\qd) \,dt & = 4 \int_0^{n/4} \ey \big( |\qd^{\ep}_0|^2 - |\qd_0|^2 \big) \,dt = 2 \int_{											 t_0-\ep}^{t_0+\ep} |\qd_0^{\ep}|^2-|\qd_0|^2 \,dt \\								   						       & = 2  \int_{t_0-\ep}^{t_0+\ep} 4(\ep-|t-t_0|)^2+ 4 \xd_0(t)(\ep-|t-t_0|) \,dt \\
									   & \le C_1 \ep^3.
\end{align*}		
For the change in potential energy, by the definition of $\qe_0$, 
$$ |\qe_i(t)-\qe_j(t)| \ge |q_i(t)-q_j(t)|, \;\; \forall t \in [0, 1/2], \; \forall \{ i \ne j\} \subset \N. $$
Furthermore by similarly estimates given in the proof of Lemma \ref{lm:dfm2}, we can always find at least one pair of masses $m_{i_0}, m_{i_1}$ ($\{i_0 \ne i_1\} \subset \N$), such that 
$$ |\qe_{i_0}(t)-\qe_{i_1}(t)|^{-1} - |q_{i_0}(t)-q_{i_1}(t)|^{-1} \le -C_2\ep^2, \;\; \forall t \in [t_1, t_2],$$
where $t_2-t_1 >0$ is independent of $\ep$. Then 
$$ \int_0^{1/2} U(\qe) - U(q) \,dt \le -C_3 \ep^2. $$
As a result, for $\ep>0$ small enough,
$$ \A_{1/2}(\qe) - \A_{1/2}(q) \le C_1 \ep^3 - C_3 \ep^2 <0.$$

When $t_0=n/4$, we define $\qe_0(t), t \in [0, n/4]$ as follows:
\begin{equation*}
\qe_0(t) = \begin{cases}
q_0(t) - \ep^2 \e_1, \;\; & \forall t \in [0, n/4-\ep], \\
q_0(t)+ (t-t_0)(2\ep-|t-t_0|), \;\; & \forall t \in[n/4, n/4-\ep].
\end{cases}
\end{equation*}
The rest is similar as above. 
\end{proof}

Since $\qo$ is strictly $x$ and $y$-monotone, as explained in Section \ref{sec:intro}, this immediately implies $\qo(t)$ is collision-free, for any $t \in (0, 1/2)$. By the proof of Lemma \ref{lm: x y dot >0}, $\qo(t)$ satisfies equation \eqref{eqn:nbody}, for any $t \in (0, 1/2).$ Meanwhile since only binary collisions are possible at $t \in \{0, 1/2\}$, it will be enough for us to prove $\qo(t)$, $t \in \{0, 1/2\}$, is free of binary collision as well. Before we proceed recall that $\qo(t)$ can have a binary collision if and only if
\begin{equation}
\label{eqn:binary collison z0}  z^{\om}_0(i/2)=0, \; \text{ for some } i \in \{0, \dots, [n/2]\}.
\end{equation}

\begin{lm} 
\label{lm:coll-free even}
If $n$ is even, $\qo(t),\; t \in \{0, 1/2\}$, is collision-free. 
\end{lm}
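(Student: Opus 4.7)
The plan is a proof by contradiction. Assume $z_0^\om(i_0/2)=0$ for some $i_0\in\{0,\dots,n/2\}$. By the characterization given just before Definition \ref{dfn:smc}, this produces an isolated binary collision at the time $t=i_0/2$ between a pair $(m_j,m_k)$ whose indices are determined from $i_0$ via the symmetries \eqref{eqn:g1}--\eqref{eqn:g1h2}; by the $g_1$-action, the full $G_n$-orbit of this collision consists of finitely many simultaneous and geometrically equivalent binary collisions, and I write $\I$ for the union of the indices involved. After a time translation I may assume a representative of the collision sits at $t=0$, and apply Proposition \ref{prop:angle} to extract the limit direction $\phi_j^+\in[0,\pi]$ of the relative vector of $(m_j,m_k)$. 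I then split on whether $\phi_j^+\in(0,\pi)$ or $\phi_j^+\in\{0,\pi\}$.

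In the first case I would invoke Lemma \ref{lm:dfm1} to produce a local modification $\qey$ of $\qo|_{\I}$ which is collision-free near $t=0$ and satisfies $\A_{\I,T}(\qey)<\A_{\I,T}(\qo)$. The sign freedom in part (e) of that lemma lets me choose $z^{\ep_1}_j(0)$ so that the sign of the new $z^{\ep_1}_0(i_0/2)$ matches $\om_{i_0}$, preserving \eqref{eqn:tc} at the perturbed instant. Because Corollary \ref{cor: strictly x y monotone} gives strict $x$- and $y$-monotonicity of $\qo$, the pair $(m_j,m_k)$ is $x$- and $y$-separated in the sense of Definition \ref{dfn:sepa} for the index partition $\I\setminus\{j,k\}=\I_0\cup\I_1$ induced by the monotone ordering of the wells $\mf{W}_i(\qo)$; Lemma \ref{lm:mc} then upgrades $\qey$ to a deformation $\qt^{\ep_1}$ that also preserves both monotone constraints. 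In the second case $\phi_j^+\in\{0,\pi\}$, the collision approaches along the $z$-axis and Lemma \ref{lm:dfm1} fails; here I would invoke Lemma \ref{lm:dfm2} instead, whose strict endpoint hypothesis ($x_j(T)<x_k(T)$ or $y_j(T)<y_k(T)$) follows at the far boundary of the fundamental domain from the strict monotonicity of $\qo$.

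Finally I would promote the local modification to a full $G_n$-equivariant loop $\qe\in\lmn_{\le,\om}$ by propagating it along the $G_n$-orbit of the collision instant, composing with the appropriate spatial rotation or reflection on each copy. For $\ep$ small, the orbit copies are supported on pairwise disjoint time intervals, so the global action decrement equals the orbit size times the single-slice decrement given by Lemma \ref{lm:dfm1} or Lemma \ref{lm:dfm2} and is strictly negative, contradicting the minimality of $\qo$ in $\lmn_{\le,\om}$. The $\om$-topological constraint \eqref{eqn:tc} is preserved automatically at every $i\ne i_0$, since the localized support leaves $z_0(i/2)$ unchanged, and at $i=i_0$ by the sign choice built into the deformation. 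I expect the main obstacle to lie in the case $\phi_j^+\in\{0,\pi\}$: one must identify the correct index partition $\I_0,\I_1$ so that the symmetric extension of the Lemma \ref{lm:dfm2} deformation still lies in $\lmn_{\le,\om}$ without creating new collisions at other instants, and one must extract the strict endpoint separation from the strict monotonicity of $\qo$ — this is precisely where Corollary \ref{cor: strictly x y monotone}, and hence the nondegeneracy Lemma \ref{lm:nondeg}, become indispensable.
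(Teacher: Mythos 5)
Your skeleton coincides with the paper's strategy: argue by contradiction from $z_0^{\om}(i^*/2)=0$, use the Sundman/limiting-angle asymptotics (Propositions \ref{prop:sundman}, \ref{prop:angle}), apply Lemma \ref{lm:dfm1} (with the sign dictated by $\om_{i^*}$) plus Lemma \ref{lm:mc} to restore monotonicity when the limiting angle is not degenerate, and fall back on Lemma \ref{lm:dfm2} (endpoint separation coming from strict monotonicity, i.e.\ Corollary \ref{cor: strictly x y monotone}) in the degenerate case. However, two steps of your write-up are genuinely wrong or missing. First, the $G_n$-copies of the collision are \emph{simultaneous}, not ``supported on pairwise disjoint time intervals'': if $z_0(i^*/2)=0$, the $\R_x$ and $\R_z$ symmetries force further binary collisions at the very same instant, one in each quadrant, and the equivariant extension of your local deformation moves all of these pairs at once. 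These copies interact gravitationally, so the global action change is emphatically not ``orbit size times the single-slice decrement''. The paper handles this by partitioning $\N$ into the quadrant groups $\I_1,\dots,\I_4$, taking the subsystem $\I$ of Lemma \ref{lm:dfm1} to be a whole group (not merely the colliding indices, as your definition of $\I$ suggests -- note also that Lemma \ref{lm:mc} needs $\I\setminus\{j,k\}=\I_0\cup\I_1\neq\emptyset$ and translates those other masses), writing $\A_{1/2}$ as the sum of the four group actions plus the cross potentials $U_{\I_a,\I_b}$ as in \eqref{eqn: Aa q}--\eqref{eqn: Aa qey}, and then arguing that the cross terms are negligible for the blow-up-small local deformation and do not increase under the deformation of Lemma \ref{lm:mc}. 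Your proposal contains no control of these cross-interaction terms, and without it the minimality of $\qo$ cannot be contradicted.

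Second, your blanket claim that the colliding pair is both $x$- and $y$-separated fails precisely in the boundary cases $i^*=0$ and $i^*=n/2$. For $i^*=0$ the collision is between $m_0$ and $m_n=\R_x q_0$ on the $x$-axis, so $x_n(t)\equiv x_0(t)$ and no $x$-separation (in particular no strict inequality $x_j(T)<x_k(T)$) can hold; only $y$-separation survives, and only after passing to the subsystem $\I_2\cup\I_3$. Symmetrically, for $i^*=n/2$ one has only $x$-separation on $\I_1\cup\I_2$. This is exactly the content of the paper's Cases 2, 3, 5, 6, and it affects the Lemma \ref{lm:dfm1}/\ref{lm:mc} branch as much as the Lemma \ref{lm:dfm2} branch; you flag a related difficulty only for $\phi_j^+\in\{0,\pi\}$ and leave it unresolved. (A smaller inaccuracy: for $\phi_j^+=\pi$ Lemma \ref{lm:dfm1} is still available with the prescribed sign, so only one extreme angle truly requires Lemma \ref{lm:dfm2}; your coarser split is harmless, but only once the correct one-directional separation above has been established.)
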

\begin{proof}
To simplify notation, let $q = \qo$. By a contradiction argument, let's assume
\begin{equation}
\label{eqn:z0} \exists\; i^* \in \{0, \dots, [n/2] \}, \text{ such that }  z_0(i^*/2) = 0,
\end{equation}
Using the deformation lemmas from Section \ref{sec:lemma}, we will find a contradiction. To ensure the $\om$-topological constraints will still be satisfied after the deformation, we need the precise value of $\om_{i^*}$. Without loss of generality, let's assume $\om_{i^*} =1$.  

Depending on the values of $n$ and $i^*$, six different cases need to be considered: in the first three cases, we assume $n = 4l$ and in the last three, $n=4l+2$, where $l \in \zz^+$. 

First let's assume $n =4l$. From Figure \ref{fig:even}(c), we can see, from $t=0$ to $t=1/2$, $m_j$, $j \in \I_a$, moves inside the quadrant $\mf{Q}_a$, where $a \in \{1, \dots, 4\}$ and $\I_a$'s are defined as follows:
\begin{align*}
\I_2 & = \{ 0, \dots, l-1 \} \cup \{ 7l, \dots, 8l-1 \}; \\
\I_1 & = \{ l, \dots, 2l-1 \} \cup \{ 6l, \dots, 7l -1 \}; \\
\I_4 & = \{ 2l, \dots, 3l-1 \} \cup \{ 5l, \dots, 6l-1 \}; \\
\I_3 & = \{ 3l, \dots, 4l-1 \} \cup \{ 4l, \dots, 5l -1 \}. 
\end{align*}
We point out that the motion of masses $\{m_i\}_{i \in \I_2}$, during $t \in [0,1/2]$, determines the motion of all the other masses during the same time through symmetries: for $m_i, i \in \I_3$, through $\mf{R}_{x}$; for $m_i, i \in \I_4$ through $\mf{R}_z$; for $m_i, i \in \I_1$ through $\mf{R}_{x} \circ \mf{R}_z$. Meanwhile $q$ being strictly $x$-monotone is equivalent to 
\begin{equation}
 \label{eqn:x-m} 
 \begin{cases} 
 & \forall 0 \le t_1 <t_2 \le 1/2, \;
  \begin{cases}
  x_i(t_1) < x_i(t_2), & \text{ if } i \in \{0, \dots, l-1\}; \\
  x_i(t_1) > x_i(t_2), & \text{ if } i \in \{7l, \dots, 8l-1 \},
  \end{cases}\\
  &\begin{cases}
   x_0(0) < x_0(\ey) = x_{8l-1}(\ey) &< x_{8l-1}(0)= x_1(0) < \cdots \\
  & \cdots < x_{l-1}(\ey)= x_{7l}(\ey) < x_{7l}(0)=0\\
  \end{cases} 
  \end{cases}
\end{equation}
and being strictly $y$-monotone is equivalent to 
\begin{equation}
 \label{eqn:y-m} 
 \begin{cases} 
 & \forall 0 \le t_1 <t_2 \le 1/2, \;
  \begin{cases}
  y_i(t_1) < y_i(t_2), & \text{ if } i \in \{0, \dots, l-1\}; \\
  y_i(t_1) > y_i(t_2), & \text{ if } i \in \{7l, \dots, 8l-1 \},
  \end{cases}\\
  &\begin{cases}
   0=y_0(0) < y_0(\ey) = y_{8l-1}(\ey) &< y_{8l-1}(0)= y_1(0) < \cdots \\
  & \cdots < y_{l-1}(\ey)= y_{7l}(\ey) < y_{7l}(0).\\
  \end{cases} 
  \end{cases}
\end{equation}
Again Figure \ref{fig:even}(c) may be helpful to see the above. 

\emph{Case 1}: $n =4l$ and  $i^* \in \{1, \dots, n/2-1 \}$. Then there is an isolated binary collision between $m_j$ and $m_k$ ($\{j > k \} \subset \I_2$) occurring inside the interior of $\mf{Q}_2$. Depend on the value of $i^*$, the collision may occur at $t=0$ or $t=1/2$:
$$ q_j(0) = q_k(0) =q_0(i^*/2), \;\text{ if } i^* \text{ is even},$$
$$ q_j(1/2) = q_k(1/2) =q_0(i^*/2), \; i^* \text{ is odd}. $$
Only the details for even $i$ will be given here, while the other is similar. By symmetric constraints, when $i^*$ is even, $j= 2n-i^*/2$ and $k=i^*/2$. To satisfy the $\om$-topological constraints with $\om_{i^*}=1$, after a local deformation near the isolated binary collision, we need $m_j$ below the $xy$-plane and $m_k$ above it at $t=0$. 

%$$ \forall t \in (0, 1/2), \;\;
%\begin{cases}
%&x_j(t) < x_j(0) = x_k(0) < x_k(t); \\
%&y_j(t) < y_j(0) = y_k(0) < y_k(t). 
%\end{cases}$$

Following the notations used in Section \ref{sec:lemma}, for any $i \in \{j, k\}$, let $(r_i(t), \phi_i(t), \tht_i(t))$ be the spherical coordinates of $\mf{q}_i(t)$, where 
$$ \mf{q}_i(t) = q_i(t) - q_c(t), \; \text{ with } \; q_c(t) = [q_j(t) + q_k(t)]/2. $$
By Proposition \ref{prop:angle}, there is a $\phi^+_j \in [0, \pi]$ with $\lim_{t \to 0^+} \phi_j(t) = \phi^+_j.$ Different types of deformation lemmas will be needed, depends on the value of $\phi^+_j$.

If $\phi_j^+ \in (0, \pi]$, we proceed as the following: 

\emph{Step 1}. By Lemma \ref{lm:dfm1}, we make a local deformation of $(q_i(t))_{i \in \I_2}$, $t \in [0, 1/2]$, near the binary collision at $t=0$ to get a new path $(\qey_i(t))_{i \in \I_2}$, $t \in [0, 1/2]$, such that for $\ep_1>0$ small enough,
$$ \A_{\I_2, 1/2}(\qey) < \A_{\I_2, 1/2}(q). $$
Meanwhile $\qey_i(t),$ $t \in [0, 1/2]$, $i \notin \I_2,$ will be given through symmetries explained before. This gives us a new path $\qey(t)=(\qey_i(t))_{i \in \N}, t \in [0,1/2]$ satisfying
\begin{equation} 
\label{eqn: A Ia} \A_{\I_a, 1/2}(\qey) < \A_{\I_a, 1/2}(q), \;\; \forall a \in \{1, \dots, 4\}.
\end{equation}
At the same time, we have
\begin{equation}
 \label{eqn: Aa q} \A_{1/2}(q)= \sum_{a=1}^4 \A_{\I_a, 1/2}(q)+ \sum_{1 \le a< b\le4} \int_0^{1/2} U_{\I_a, \I_b}(q) \,dt;
\end{equation}
\begin{equation}
 \label{eqn: Aa qey} \A_{1/2}(\qey)= \sum_{a=1}^4 \A_{\I_a, 1/2}(\qey)+ \sum_{1 \le a< b\le4} \int_0^{1/2} U_{\I_a, \I_b}(\qey) \,dt,
\end{equation}
where 
$$U_{\I_a, \I_b}(q) = \sum_{i_1 \in \I_a, i_2 \in \I_b} |q_{i_1}-q_{i_2}|^{-1}; \;\; U_{\I_a, \I_b}(\qey) = \sum_{i_1 \in \I_a, i_2 \in \I_b} |\qey_{i_1}-\qey_{i_2}|^{-1}. $$

Besides the binary collision between $m_j$ and $m_k$ inside $\mf{Q}_2$, due to the symmetric constraints, there are three more binary collisions inside each one of the other three quadrants. Therefore by Lemma \ref{lm:dfm1}, when $m_{i}$ is one of those eight masses that are involved in the binary collisions, its path was deformed locally, when $t \in [0, t_0]$ for $t_0$ small enough (the paths of all the other masses are unchanged). As a result, this leads to changes in $U_{\I_a, \I_b}$, $1 \le a< b \le 4$. However due to the \emph{blow-up} technique used in the proof of Lemma \ref{lm:dfm1}, we can make the local deformations near the isolated binary collisions as small as we want, so the changes in $U_{\I_a, \I_b}$'s can be ignored compare to $\A_{\I_a, 1/2}$'s. Therefore by \eqref{eqn: A Ia}, \eqref{eqn: Aa q} and  \eqref{eqn: Aa qey}, we get $\A_{1/2}(\qey) < \A_{1/2}(q)$. 

\emph{Step 2}. Notice that after the local deformation by Lemma \ref{lm:dfm1}, it is not so clear $\qey$ is still $x$ and $y$-monotone. Meanwhile \eqref{eqn:x-m} and \eqref{eqn:y-m} clearly show $(q_i(t))_{i \in \I_2}$, $t \in [0,1/2]$, is $x$ and $y$-separated (by $m_j$ and $m_k$). Then we can make a deformation of $(\qey_i(t))_{i \in \I_2}$, $t \in [0,1/2]$, as in the proof of Lemma \ref{lm:mc}. This gives us a new path $(\qt_i^{\ep_1}(t))_{i \in \I_2}$, $t \in [0, 1/2]$. By what we prove in Lemma \ref{lm:mc}, it satisfies \eqref{eqn:x-m} and \eqref{eqn:y-m} (here we may need to shift the new path by some constants along the $x$ and $y$ direction to make sure $y_0(0)=x_{7l}(0)=0$). After this we define each $\qt^{\ep_1}_i(t)$, $ t \in [0, 1/2]$, $i \notin \I_2$, through the symmetries explained before. Then $(\qt^{\ep_1}_i(t))_{i \in \N},$ $t \in [0, 1/2]$ is $x$ and $y$-monotone, so it belongs to $\lmn_{\le, \om}$. 

By Lemma \ref{lm:mc},
$$ \A_{\I_i, \ey}(\qt^{\ep_1}) \le \A_{\I_i, \ey}(\qey) < \A_{\I_i, \ey}(q), \; \forall i \in \{1,\dots, 4 \}.$$ 
Furthermore the definition of $\qt^{\ep_1}$ given in the proof of Lemma \ref{lm:mc} implies
$$ |\qt^{\ep_1}_{i_1}(t) -\qt^{\ep_1}_{i_2}(t)| \ge |\qey_{i_1}(t) -\qey_{i_2}(t)|, \; \forall i_1 \in \I_a, \; \forall i_2 \in \I_b, \; \forall 1 \le a < b \le 4. $$
Therefore
$$ \sum_{1\le a< b\le 4} \int_0^{\ey} U_{\I_a, \I_b}(\qt^{\ep_1}) \,dt \le \sum_{1\le a< b\le 4} \int_0^{\ey} U_{\I_a, \I_b}(q^{\ep_1}) \,dt. $$
As a result $A_{1/2}(\qt^{\ep_1}) \le A_{1/2}(\qey) < A_{1/2}(q)$, which is absurd. 

If $\phi^+_j= 0$, Lemma \ref{lm:dfm1} can no longer be applied. However by \eqref{eqn:x-m} and \eqref{eqn:y-m}, the conditions of Lemma \ref{lm:dfm2} are satisfied now. Then by making a deformation of $(q_i(t))_{i \in \I_2}$, $t \in [0, 1/2]$ as in Lemma \ref{lm:dfm2}, we get a new path $(\qee_i(t))_{i \in \I_2}$, $t \in [0,1/2]$, which satisfies \eqref{eqn:x-m} and \eqref{eqn:y-m}, and $\A_{\I_2, 1/2}(\qee) < \A_{\I_2, 1/2}(q)$, for $\ep_2 >0$ small enough. Then by a similar argument as in \emph{Step 2}, we can find a $\qee \in \lmn_{\le, \om}$ with $\A_{1/2}(\qee) < \A_{1/2}(q)$, which is absurd. 

This finishes our proof of \emph{Case 1}.  

\emph{Case 2}: $n =4l$ and $i^* =0$. There is an isolated binary collision between $m_0$ and $m_{4l}$ at $t=0$ occurring inside the interior of $\mf{Q}_2 \cap \mf{Q}_3$. As $0 \in \I_2$ and $4l \in \I_3$, we need to consider the sub-system $(q_i(t))_{i \in \I_2 \cup \I_3}$, $t \in [0, 1/2]$. Because $(q_i(t))_{i \in \I_3}$ are determined by $(q_i(t))_{i \in \I_2}$ through $\mf{R}_x$, by \eqref{eqn:y-m}, $(q_i(t))_{i \in \I_2 \cup \I_3}, t \in [0,1/2]$, is $y$-separated by ($m_0$ and $m_{4l}$), although not $x$-separated anymore. However this is enough for us to apply the deformation lemmas from Section \ref{sec:lemma} and a contradiction can be reached by similar arguments used in \emph{Case 1}.  

\emph{Case 3}: $n=4l$ and $i^* = n/2$. There is an isolated binary collision between the masses $m_l$ and $m_{7l}$ at $t=0$ occurring inside the interior of $\mf{Q}_1 \cap \mf{Q}_2$. As $l \in \I_1$ and $7l \in \I_2$, we need to consider the sub-system $(q_i(t))_{i \in \I_1 \cup \I_2}$, $t \in [0, 1/2]$. Because $(q_i(t))_{i \in \I_1}$ is determined by $(q_i(t))_{i \in \I_2}$ through $\mf{R}_x \circ \mf{R}_z$, by \eqref{eqn:x-m}, $(q_i(t))_{i \in \I_1 \cup \I_2}$, $t \in [0, 1/2]$ is $x$-separated (by $m_l$ and $m_{7l}$), although not $y$-separated. However like in \emph{Case 2}, the deformation lemmas can still be applied to get a contradiction. 

This finishes our proof of the first three cases. For the remaining three cases, we assume $n=4l+2$. Now from $t=0$ to $t=1/2$, $m_j$, $j \in \J_a$, is moving inside $\mf{Q}_a$, for any $a \in \{1, \dots, 4\}$, where
\begin{align*}
\J_2 & = \{0, \dots, l \} \cup \{ 7l+4, \dots, 8l+3 \}; \\
\J_1 & = \{l+1, \dots, 2l \} \cup \{ 6l+3, \dots, 7l+3 \}; \\
\J_4 & = \{2l+1, \dots, 3l+1 \} \cup \{5l+3, \dots, 6l+2 \};\\
\J_3 & = \{3l+2, \dots, 4l+1 \} \cup \{ 4l+2, \dots, 5l+2 \}.
\end{align*}
The motion of masses $\{m_i\}_{i \in \J_2}$ during $t \in [0, 1/2]$ determines the motion of all the other masses during the same time interval through the symmetries as explained in the previous cases. Furthermore conditions similar to \eqref{eqn:x-m} and \eqref{eqn:y-m} can be given to help us show the concerned sub-system is $x$-separated, $y$ separated or both. Figure \ref{fig:even}(d) may be helpful to see all these. 

\emph{Case 4}: $n = 4l+2$ and $i^* \in \{1, \dots, n/2-1\}$. There is an isolated binary collision between $m_j$ and $m_k$ ($\{j>k\} \subset \J_2$) occurring inside $\mathring{\mf{Q}}_2$. The rest of is similar to \emph{Case 1}.

\emph{Case 5}: $n =4l+2$ and  $i^* = 0$. There is an isolated binary collision between $m_0$ and $m_{4l+2}$ ($0 \in \J_2$ and $4l+2 \in \J_3$) at $t=0$ occurring inside the interior of $\mf{Q}_2 \cap \mf{Q}_3$. Since $(q_i(t))_{i \in \J_2 \cup \J_3}, t \in [0, 1/2]$, is $x$-separated (by $m_0$ and $m_{4l+2}$), just repeat the same words from \emph{Case 2}.

\emph{Case 6}: $n =4l+2$ and $i^* = n/2$. There is an isolated binary collision between the mass $m_l$ and $m_{7l+3}$ ($l \in \J_2$ and $7l+3 \in \J_1$) at $t=1/2$ occurring inside the interior of $\mf{Q}_1 \cap \mf{Q}_2$.  As in \emph{Case 3}, $(q_i(t))_{i \in \J_1 \cup \J_2}$, $t \in [0, 1/2]$, is $x$-separated (by $m_l$ and $m_{7l+3}$), although the binary collision occurs at $t=1/2$, a contradiction can be reached by a similar argument. 

This finishes the last three cases and the entire proof. 
\end{proof}

\begin{lm}
\label{lm:odd} If $n$ is odd and $\om \in \Om_{[n/2]}$ satisfies condition \eqref{eqn:odd om}, $\qo(t),\; t \in \{0, 1/2\}$, is collision-free.  
\end{lm}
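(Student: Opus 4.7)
The plan is to mirror the strategy of Lemma \ref{lm:coll-free even}, adapted to the odd case with fundamental domain $[0,1/4]$ in place of $[0,1/2]$. Suppose, for contradiction, that there exists $i^* \in \{0, \dots, [n/2]\}$ with $z_0^{\om}(i^*/2) = 0$; without loss of generality $\om_{i^*} = 1$, so that the $\om$-topological constraints dictate which of the colliding masses must be displaced upward and which downward after a deformation. By Corollary \ref{cor: strictly x y monotone}, $\qo \in \lmn_{<, \om}$ is strictly $x$ and $y$-monotone, and by Lemma \ref{lm: x y dot >0} it solves \eqref{eqn:nbody} on $(0,1/2)$; hence any collision is an isolated binary collision occurring at $t \in \{0, 1/2\}$ between a pair $m_j, m_k$ determined explicitly by $i^*$ via the action of $G_n$.

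Next, I would partition $\N$ into four index sets $\I_1, \dots, \I_4$ (analogous to $\I_a$ and $\J_a$ in the proof of Lemma \ref{lm:coll-free even}) where $\I_a$ collects those masses whose orbits lie in the quadrant $\mf{Q}_a$ during the motion from $t=0$ to $t=1/2$, using the actions of $g_1, h_1, h_2$ to transport the fundamental piece $q_0(t), t \in [0, n/4]$, and taking special care of the boundary indices $[n/4]$ and $n+[n/4]$ that cross the $yz$-plane at $t=1/4$. Split into three geometric subcases paralleling Cases 1--3 (or 4--6) of Lemma \ref{lm:coll-free even}: (a) $i^* \in \{1, \dots, [n/2]-1\}$, where the collision pair lies inside a single $\mathring{\mf{Q}}_a$; (b) $i^* = 0$, where the collision is between the top-most masses of $\I_2$ and $\I_3$ on the $xz$-plane; (c) $i^* = [n/2]$, where the collision is between the right-most masses of $\I_1$ and $\I_2$ on the $yz$-plane. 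In (a) the restricted sub-system over a single $\I_a$ is both $x$- and $y$-separated by $m_j, m_k$ thanks to strict $x$ and $y$-monotonicity; in (b) the sub-system over $\I_2 \cup \I_3$ (related via $\mf{R}_x$) is $y$-separated; in (c) the sub-system over $\I_1 \cup \I_2$ (related via $\mf{R}_x \circ \mf{R}_{yz}$ or $\mf{R}_x \circ \mf{R}_z$ depending on parity residues of $n \bmod 4$) is $x$-separated.

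In each subcase I would apply Lemma \ref{lm:dfm1} together with Lemma \ref{lm:mc} if $\phi_j^+ \in (0, \pi]$, or Lemma \ref{lm:dfm2} if $\phi_j^+ \in \{0, \pi\}$, to produce a local deformation of $(q_i^{\om})_{i \in \I}$, $t \in [0,1/4]$, which strictly decreases $\A_{\I, 1/4}$ while preserving $x$- and $y$-monotonicity of the sub-system and the endpoint constraints on the fundamental domain. Extending this deformed piece to a full $G_n$-equivariant loop $\qt \in \lmn_{\le, \om}$ via the symmetries of $G_n$, the contributions $U_{\I_a, \I_b}$ between different groups change only through an arbitrarily small Terracini-type blow-up neighborhood and so are negligible compared with the gain in $\A_{\I, 1/4}$; this yields $\A_n(\qt) < \A_n(\qo)$, contradicting minimality.

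The main obstacle is subcase (c) together with the boundary index $[n/4]$: for odd $n$ the mass $m_{[n/4]}$ turns around on the $yz$-plane exactly at $t=1/4$, which interacts with the symmetries $g_1 h_2$ in a non-trivial way, and one must verify carefully that the relevant sub-system really is $x$-separated (resp.\ $y$-separated) on the \emph{entire} interval $[0, 1/4]$ and that the deformed piece, once reflected by $G_n$, still satisfies the $\om$-topological constraints \eqref{eqn:tc} at every $i/2$, $i \ne i^*$. This is precisely where condition \eqref{eqn:odd om} is used, via Lemma \ref{lm:nondeg}: it guarantees $x_0^{\om}(0) < 0 < y_0^{\om}(n/4)$, excluding the degenerate case where $\qo$ lies in the $xz$-plane, so that the strict separation needed to apply Lemma \ref{lm:mc} and Lemma \ref{lm:dfm2} holds uniformly. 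Everything else is a matter of bookkeeping on the index sets, directly parallel to Lemma \ref{lm:coll-free even}.
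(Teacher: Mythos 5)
Your high-level strategy (contradiction, deformation lemmas of Section~\ref{sec:lemma}, separation, symmetry extension, negligible inter-group interaction, and the role of \eqref{eqn:odd om} through Lemma~\ref{lm:nondeg} and strict monotonicity) is the paper's strategy, but the odd-case bookkeeping you propose contains two genuine gaps. First, the case decomposition is wrong. For odd $n$ the $h_2$-symmetry \eqref{eqn:g1h2} gives $z_0(t)=z_0(n/2-t)$, and together with \eqref{eqn:g1g2} this makes the collision condition $z_0(i^*/2)=0$, $i^*\in\{0,\dots,[n/2]\}$, equivalent to $z_0(j)=0$ for some integer $j\in\{0,\dots,[n/2]\}$; hence every collision can be realized as a binary collision between $m_j$ and $m_{2n-j}$ at $t=0$, lying in the closed upper half-plane. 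In particular, since $x_0$ vanishes only at $t=n/4,\,3n/4$, which are never of the form $i/2$ for odd $n$, no collision can lie on the $yz$-plane: your subcase (c) ($i^*=[n/2]$, collision on the $yz$-plane, sub-system related by $\R_x\circ\R_{yz}$) treats a configuration that does not occur, and the map $\R_x\circ\R_{yz}$ is not even available as a time-preserving symmetry because for odd $n$ the element $h_2$ acts on time by $t\mapsto 1/2-t$. Conversely, without the reduction to integer times you have no way to treat a collision occurring at $t=1/2$ (odd $i^*$) by a deformation supported in $[0,1/4]$. The paper has only two geometric cases per residue of $n$ mod $4$: $i^*\in\{1,\dots,[n/2]\}$ (collision at $t=0$ in the interior of $\mf{Q}_1\cup\mf{Q}_2$) and $i^*=0$ (collision at $t=0$ in $\mf{Q}_2\cap\mf{Q}_3$).

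Second, the choice of sub-systems is off. For odd $n$ the only time-preserving symmetry on the fundamental domain is $h_1$ (i.e.\ $\R_x$), so a loop in $\lmn$ is determined by the paths of $n$ masses on $[0,1/4]$, not $n/2$; a quadrant-by-quadrant partition valid on all of $[0,1/2]$ as in Lemma~\ref{lm:coll-free even} does not exist, and deforming only the masses of a single quadrant set (or of $\I_2\cup\I_3$ in your case (b)) and then ``extending by the symmetries of $G_n$'' does not produce the complement as a symmetric image. The paper instead uses the two half-plane families $\I_5,\I_6$ (resp.\ $\J_5,\J_6$) with $\I_6$ the $\sg(h_1)$-image of $\I_5$, proves the ordering conditions \eqref{eqn:x-m odd}--\eqref{eqn:y-m odd} for the $n$ masses of $\I_5$ on $[0,1/4]$, and applies Lemmas~\ref{lm:dfm1}, \ref{lm:mc}, \ref{lm:dfm2} with $\I=\I_5$ (both colliding masses $m_{i^*}$, $m_{2n-i^*}$ lie in $\I_5$, while their $h_1$-partners collide simultaneously in the lower half-plane and are deformed by $\R_x$); in the case $i^*=0$ one must take the whole system, which is only $y$-separated. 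Your identification of where \eqref{eqn:odd om} enters is correct, but the two points above are not mere bookkeeping: as written, your case (c) and the symmetry-extension step in (a)--(b) would fail for odd $n$.
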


\begin{proof}
To simplify notation, let $\qo=q$. By a contradiction argument, like in the proof of Lemma \ref{lm:coll-free even}, let's assume \eqref{eqn:z0} holds. For odd $n$, because of the definition of the action of $h_2$, \eqref{eqn:z0} is equivalent to the following 
\begin{equation}
\label{eqn:coll odd} \exists\; i^* \in \{0, \dots, [n/2] \}, \text{ such that }  z_0(i^*) = 0.
\end{equation}
Notice that in \eqref{eqn:z0}, we have $z_0(i^*/2)=0$ instead of $z_0(i^*)=0$.

Depending on the values of $n$ and $i^*$, four different cases need to be considered. To distinguish them from those discussed in Lemma \ref{lm:coll-free even}, we will count from $7$. Figure \ref{fig:odd}(c) and Figure \ref{fig:odd}(d) will be helpful during the proof, as they indicate the motion of the masses for $t \in [0, 1/2]$, after projecting to the $xy$-plane.

First let's assume $n=4l+1$. From $t=0$ to $t=1/4$, $m_j$ is moving inside $\mf{Q}_1 \cup \mf{Q}_2$, if $j \in \I_5$; inside $\mf{Q}_3 \cup \mf{Q}_4$, if $j \in \I_6.$ Here
\begin{align*}
 \I_5 &= \{ 0, \dots, 2l \} \cup \{ 6l+2, \dots, 8l+1\}; \\
 \I_6 &= \{2l+1, \dots, 4l \} \cup \{ 4l+1, \dots, 6l+1 \}. 
\end{align*}
We observe that the motion of masses $\{m_i\}_{i \in \I_5}$ during $t \in [0,1/4]$ determines the motion of masses $\{m_i\}_{i \in \I_6}$ during the same time interval through $\mf{R}_x$. Furthermore $q$ being strictly $x$-monotone is equivalent to 
\begin{equation}
\label{eqn:x-m odd} \begin{cases}
& \forall 0 \le t_1 < t_2 \le \frac{1}{4}, \; \begin{cases}
x_i(t_1) < x_i(t_2), & \text{ if } i \in \{0, \dots, 2l \};\\
x_i(t_1) > x_i(t_2), & \text{ if } i \in \{6l+2, \dots, 8l+1\}, \\
\end{cases}\\
& \begin{cases}
x_0(0) < x_0(\frac{1}{4}) & < x_{8l+1}(\frac{1}{4})< x_{8l+1}(0) =x_1(0) < x_1(\frac{1}{4})< \cdots \\
& \cdots < x_l(\frac{1}{4})=0< \cdots< x_{6l+2}(0) = x_{2l}(0) < x_{2l}(\frac{1}{4}), \\
\end{cases}
\end{cases}
\end{equation}
and being strictly $y$-monotone is equivalent to 
\begin{equation}
\label{eqn:y-m odd} \begin{cases}
& \forall 0 \le t_1 < t_2 \le \frac{1}{4},\; \begin{cases}
y_i(t_1) < y_i(t_2), & \text{ if } i \in \{0, \dots, l\} \cup \{6l+2, \dots, 7l+1\};\\
y_i(t_1) > y_i(t_2), & \text{ if } i \in \{l+1, \dots, 2l \} \cup \{7l+2, \dots, 8l+1\}, \\
\end{cases}\\
& \begin{cases}
& y_0(0) < y_0(\frac{1}{4})= y_{2l}(\frac{1}{4})< y_{2l}(0) = y_{6l+2}(0) < y_{6l+2}(\frac{1}{4}) =  y_{8l+1}(\frac{1}{4}) < y_{8l+1}(0) \\ &= y_1(0)< \cdots \cdots < y_{7l+1}(\frac{1}{4}) = y_{7l+2}(\frac{1}{4}) < y_{7l+2}(0) = y_l(0) < y_l(\frac{1}{4}). \\
\end{cases}\\
\end{cases}
\end{equation}
It is easier to see the above connection by Figure \ref{fig:odd}(c).

\emph{Case 7}: $n = 4l+1$ and $i^* \in \{1, \dots, [n/2] \}$. Then by the symmetric constraints, there is an isolated binary collision between $m_{i^*}$ and $m_{2n-i^*}$ at $t=0$ occurring inside the interior of $\mf{Q}_1 \cup \mf{Q}_2$. By \eqref{eqn:x-m odd} and \eqref{eqn:y-m odd}, $(q_i(t))_{i \in \I_5}$, $t \in [0,1/4]$, is $x$ and $y$-separated by ($m_{2n-i^*}$ and $m_{i^*}$). Then a contradiction can be reached by a similar argument used in \emph{Case 1}. 
%Meanwhile the monotone constraints implies, for any $t \in (0, 1/4)$, 
%\begin{align*}
%& \;\; \; x_{2n-i^*}(t) < x_{2n-i^*}(0) = x_{i^*}(0) < x_{i^*}(t); \\
%& \begin{cases}
%y_{2n-i^*}(t) < y_{2n-i^*}(0) = y_{i^*}(0) < y_{i^*}(t), & \;\; \text{ if } i^* \in \{1, \dots, [n/4]\}; \\
%y_{i^*}(t) < y_{i^*}(0) = y_{2n-i^*}(0) < y_{2n-i^*}(t), & \;\; \text{ if } i^* \in \{[n/4]+1, \dots, [n/2] \}. 
%\end{cases}
%\end{align*}
 
\emph{Case 8}: $n =4l+1$ and $i^* =0$. There is an isolated binary collision between $m_0$ and $m_{4l+1}$ occurring inside the interior of $\mf{Q}_2 \cap \mf{Q}_3$. Here we need to consider the entire system $\{m_i\}_{i \in \N}$, as $0 \in \I_5$ and $4l+1 \in \I_6$. Because $(q_i(t))_{i \in \I_6}$ are determined by $(q_i(t))_{i \in \I_5}$ through $\mf{R}_x$, by \eqref{eqn:x-m odd}, we can see $(q_i(t))_{i \in \I_5 \cup \I_6}$, $t \in [0, 1/4]$ is still $y$-separated (by $m_0$ and $m_{4l+1}$), although not $x$-separated. This is again enough for us to use the deformation lemmas to get a contradiction just like \emph{Case 2}. 

This finishes our proof for the cases with $n=4l+1$. From now on let's assume $n =4l+3$. Then from $t=0$ to $t= 1/4$, $m_i$ is moving inside $\mf{Q}_1 \cup \mf{Q}_2$, if $i \in \J_5$ and inside $\mf{Q}_3 \cup \mf{Q}_4$, if $i \in \J_6$, where
\begin{align*}
 \J_5 &= \{0, \dots, 2l+1\} \cup \{6l+5, \dots, 8l+5 \};\\
 \J_6 &= \{2l+2, \dots, 4l+2 \} \cup \{ 4l+3, \dots, 6l+4 \}. 
\end{align*}
Figure \ref{fig:odd}(d) may be helpful to see the above. Like the previous two cases the motion of masses $\{m_i\}_{i \in \J_5}$  during $t \in [0, 1/4]$ determines the motion of masses $\{m_i\}_{i \in \J_6}$ during the same time interval through the rotation $\mf{R}_x$ and conditions similar to \eqref{eqn:x-m odd} and \eqref{eqn:y-m odd} can be given to help us show the proper sub-system is $x$-separated, $y$-separated or both. 

\emph{Case 9}: $n=4l+3$ and $i^* \in \{1, \dots, [n/2]\}$. Just repeat the same argument used in \emph{Case 7}. 

\emph{Case 10}: $n=4l+3$ and $i^* \in 0$. Just repeat the same argument used in \emph{Case 8}. 

This finishes our entire proof. 
\end{proof}

Now we are ready to give a proof of the main theorem. 
\begin{proof}

[\textbf{Theorem \ref{thm:main}}] For any $\om \in \Omega_{[n/2]}$ satisfying \eqref{eqn:odd om}, by Proposition \ref{prop:coer}, there is a $\qo \in \Lmd^n_{\le, \om}$ with 
$$ \A_n(\qo)= \inf\{ \A_n(q)| \; q \in \Lmd^n_{\le, \om} \}. $$
Then Corollary \ref{cor: strictly x y monotone} shows $\qo$ is strictly $x$ and $y$-monotone, and Lemma \ref{lm:coll-free even} and \ref{lm:odd} show it is collision-free. Finally Lemma \ref{lm: x y dot >0} implies $\qo$ satisfies \eqref{eq: x dot >0} and \eqref{eq: y dot >0}, and it is a classical solution of \eqref{eqn:nbody}. 
\end{proof}

\section{Appendix} \label{sec:app}

The purpose of this section is to give a proof of Lemma \ref{lm:nondeg} using contradiction arguments. If Lemma \ref{lm:nondeg} does not hold, the action minimizer $\qo$ is a planar loop (all the masses always travel either in the $yz$-plane or the $xz$-plane). Then the symmetric and topological constraints implies $\qo$ must contain at least one isolated collision. By a local deformation near the isolated collision and a further deformation (to ensure the various constraints will be satisfied), we can find a new loop in $\lmn_{\le, \om}$ with action value strictly smaller than $\qo$'s and reach a contradiction.

Given a $q \in H^1([T_1, T_2], \rr^{3N})$, we say it is a \textbf{generalized solution} of \eqref{eqn:nbody}, if the set of collision moments $q^{-1}(\Delta)= \{t \in [T_1, T_2] |\; q(t) \in \Delta \}$ has measure zero, and $q(t)$ is a $C^2$ solution of \eqref{eqn:nbody} in $[T_1, T_2] \setminus q^{-1}(\Delta)$. For any $t_0 \in q^{-1}(\Delta)$, there is a $\I_0 \subset \N$, such that 
$$ q_i(t_0) =q_j(t_0), \; \forall \{i \ne j\} \subset \I_0; \;\; q_i(t_0) \ne q_k(t_0), \; \forall i \in \I_0, \; \forall k \in \N \setminus \I_0, $$
and we say $q(t_0)$ has an $\I_0$-cluster collision. Furthermore if there is a $\dl >0$ such that $([t_0 -\dl, t_0 +\dl] \cap [T_1, T_2]) \cap q^{-1}(\Delta) = \emptyset$, then we say $t_0$ is an isolated ($\I_0$-cluster) collision moment and $q(t_0)$ has an isolated ($\I_0$-cluster) collision. 

In the following, we introduce several local deformation lemmas that deform a planar collision solution near an isolated collision along the direction perpendicular to the plane containing it. The following set will be needed during the deformation
$$ \mf{T}=\big\{ \tau = (\tau_i)_{i \in \N}| \; \tau_i \in \{ 0, \pm 1 \} \text{ and } \tau \ne 0 \big\}.$$ 

\begin{lm}
\label{lm:dfm4} Given a generalized solution $q \in H^1([T_1, T_2], \rr^{3N})$ that is contained in the $yz$-plane, if $t_0 =T_1$ is an isolated $\I_0$-cluster collision moment and $\tau \in \mf{T}$ satisfies
\begin{equation}
\label{eqn:cond tau} \tau_{i_0} \ne \tau_{i_1}, \text{ for some } \{i_0 \ne i_1\} \subset \I_0, \text{ and } \tau_i = 0, \; \forall i  \in \N \setminus \I_0,
\end{equation} 
then for $\ep>0$ small enough, there exist an $h \in H^1([t_0, t_0 +\dl], \rr)$ and a path $\qe \in H^1([t_0, t_0+\dl], \rr^{3N})$, with $(\qe_i(t))_{i \in \N}=(q_i(t)+ \ep h(t) \tau_i \e_1)_{i \in \I},$ $t \in [t_0, t_0 +\dl]$, satisfying $\A_{t_0, t_0 +\dl}(\qe)< \A_{t_0, t_0 +\dl}(q)$ and the following:
\begin{enumerate}
\item[(a).] $h(t)=1$, $\forall t \in [t_0, t_0 +\dl_1]$, for some $0< \dl_1=\dl_1(\ep)< \dl$ small enough; 
\item[(b).] $h(t) =0$, $\forall t \in [t_0 +\dl_2, t_0 +\dl]$, for some $\dl_1< \dl_2=\dl_2(\ep) < \dl$ small enough;
\item[(c).] $h(t)$ is decreasing for $t \in [t_0 + \dl_1, t_0 +\dl_2]$. 
\end{enumerate}
\end{lm}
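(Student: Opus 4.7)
The plan is to take the explicit test path $\qe_i(t) = q_i(t) + \ep h(t)\tau_i \e_1$, $i \in \N$, where $h : [t_0, t_0+\dl] \to [0,1]$ is the piecewise linear cutoff equal to $1$ on $[t_0, t_0+\dl_1]$, linearly decreasing from $1$ to $0$ on $[t_0+\dl_1, t_0+\dl_2]$, and identically $0$ on $[t_0+\dl_2, t_0+\dl]$, for appropriate $\dl_1(\ep) < \dl_2(\ep) < \dl$. Properties (a)--(c) are then immediate, and $\qe \in H^1$ since $h$ is Lipschitz; the whole task reduces to verifying $\A_{t_0, t_0+\dl}(\qe) < \A_{t_0, t_0+\dl}(q)$ for $\ep$ small enough.

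The crucial geometric observation is that because $q$ lies in the $yz$-plane, $\dot q_i \cdot \e_1 = 0$ for every $i$, so the cross-term in $|\dot \qe_i|^2 = |\dot q_i|^2 + 2\ep \dot h \tau_i (\dot q_i \cdot \e_1) + \ep^2 \dot h^2 \tau_i^2$ vanishes identically. This gives the \emph{exact} identity
\begin{equation*}
\int_{t_0}^{t_0+\dl} \bigl(K(\dot \qe)-K(\dot q)\bigr)\,dt \;=\; \frac{\ep^2}{2}\Bigl(\sum_{i \in \I_0} m_i \tau_i^2\Bigr) \int_{t_0+\dl_1}^{t_0+\dl_2} \dot h(t)^2\,dt \;=\; \frac{C_K\,\ep^2}{\dl_2-\dl_1},
\end{equation*}
with $C_K>0$ depending only on the masses and $\tau$, so once $\dl_2-\dl_1$ is fixed the kinetic cost is only $O(\ep^2)$.

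For the potential, $|\qe_i - \qe_j|^2 = |q_i-q_j|^2 + \ep^2 h(t)^2(\tau_i-\tau_j)^2 \ge |q_i-q_j|^2$, so $U(\qe) \le U(q)$ pointwise. Split the pairs into three groups. Pairs with $\tau_i=\tau_j$ contribute nothing. Pairs with $i\in\I_0$, $j\in\N\setminus\I_0$ (so $\tau_j=0$) have $|q_i-q_j|$ bounded away from $0$ on $[t_0,t_0+\dl]$ by the isolated-cluster-collision hypothesis, so a Taylor expansion gives a net change of size $O(\ep^2)$. The gain comes from the pairs $\{i,j\} \subset \I_0$ with $\tau_i\ne\tau_j$ which exist by \eqref{eqn:cond tau}: writing $r(t)=|q_i(t)-q_j(t)|$ and $b=\ep|\tau_i-\tau_j|$, the elementary inequality
\begin{equation*}
\frac{1}{r} - \frac{1}{\sqrt{r^2+b^2}} \;=\; \frac{b^2}{r\sqrt{r^2+b^2}\bigl(\sqrt{r^2+b^2}+r\bigr)} \;\ge\; \frac{1}{8\,r}\quad\text{whenever } r\le b,
\end{equation*}
combined with the cluster Sundman-type upper bound $r(t)\le C(t-t_0)^{2/3}$, yields after integration over $[t_0, t_0+(b/C)^{3/2}] \subset [t_0, t_0+\dl_1]$ (for $\dl_1$ fixed and $\ep$ small) a potential drop of at least $c\,b^{1/2} = c'\,\ep^{1/2}$. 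Since $\ep^{1/2} \gg \ep^2$, the full action increment satisfies
\begin{equation*}
\A_{t_0,t_0+\dl}(\qe) - \A_{t_0,t_0+\dl}(q) \;\le\; \frac{C_K\,\ep^2}{\dl_2-\dl_1} + O(\ep^2) - c'\,\ep^{1/2} \;<\; 0
\end{equation*}
for $\ep$ sufficiently small.

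The main obstacle I expect is justifying the upper bound $r(t)\le C(t-t_0)^{2/3}$ at an isolated \emph{cluster} collision, since Proposition \ref{prop:sundman} only handles the binary case. This requires the Wintner--Sperling moment-of-inertia estimate $I_{\I_0}(t) := \sum_{i\in\I_0} m_i |q_i - q^{\I_0}_c|^2 \le C(t-t_0)^{4/3}$ for the colliding cluster, which follows from the Lagrange--Jacobi identity together with the boundedness of the restricted energy and then controls every pairwise distance inside $\I_0$. Everything else is careful bookkeeping of constants as $\ep \downarrow 0$.
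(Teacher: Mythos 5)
Your argument is correct and is essentially the proof the paper has in mind: the paper does not spell it out but defers to Lemma 4.1--4.3 of \cite{Y15c}, whose mechanism is exactly yours --- deform perpendicularly to the plane so the kinetic cross term vanishes and the cost is $O(\ep^2)$, note all mutual distances only increase, and extract a potential gain of order $\ep^{1/2}$ near the collision from the Sundman-type bound $|q_{i_0}-q_{i_1}|\le C(t-t_0)^{2/3}$. The cluster version of that bound, which you correctly flag as the only nonstandard-looking ingredient, is the known partial-collision asymptotic (as in \cite{FT04}), so your sketch via the cluster moment of inertia closes the argument.
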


\begin{lm}
\label{lm:dfm5} Given a generalized solution $q \in H^1([T_1, T_2], \rr^{3N})$ that is contained in the $yz$-plane, if $t_0 =T_2$ is an isolated $\I_0$-cluster collision moment and $\tau \in \mf{T}$ satisfies condition \eqref{eqn:cond tau}, then for $\ep>0$ small enough, there exist $h \in H^1([t_0-\dl, t_0], \rr)$ and a path $\qe \in H^1([0, T], \rr^{3N})$, with $(\qe_i(t))_{i \in \N}=(q_i(t)+ \ep h(t) \tau_i \e_1)_{i \in \N}$, $t \in [t_0-\dl,t_0]$, satisfying $\A_{t_0-\dl, t_0}(\qe)< \A_{t_0-\dl, t_0}(q)$ and the following:
\begin{enumerate}
\item[(a).] $h(t)=1$, $\forall t \in [t_0-\dl_1, t_0]$, for some $0< \dl_1=\dl_1(\ep)< \dl$ small enough; 
\item[(b).] $h(t) =0$, $\forall t \in [t_0-\dl, t_0-\dl_2]$, for some $\dl_1< \dl_2=\dl_2(\ep) < \dl$ small enough;
\item[(c).] $h(t)$ is increasing for $t \in [t_0-\dl_2, t_0-\dl_1]$. 
\end{enumerate}
\end{lm}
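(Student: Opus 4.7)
The plan is to reduce Lemma \ref{lm:dfm5} to the preceding Lemma \ref{lm:dfm4} by exploiting the time-reversal symmetry of the $N$-body problem. First I set $\bar{q}(s) := q(t_0 - s)$ for $s \in [0, t_0 - T_1]$. Since the potential $U$ depends only on positions and the kinetic energy $K$ is even in velocities, Newton's equations \eqref{eqn:nbody} are invariant under $t \mapsto -t$, so $\bar{q}$ is again a generalized solution contained in the $yz$-plane. The collision at $t = t_0$ is sent to $s = 0$, which now sits at the \emph{left} endpoint of the new time interval; its isolation and its $\I_0$-cluster type are preserved, and the same $\tau \in \mf{T}$ still satisfies \eqref{eqn:cond tau}.

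Next I apply Lemma \ref{lm:dfm4} to $\bar{q}$ at its left-endpoint collision. For every sufficiently small $\ep > 0$, this produces $\bar{h} \in H^1([0,\dl], \rr)$ and a path $\bar{\qe}$ with $\bar{\qe}_i(s) = \bar{q}_i(s) + \ep \bar{h}(s) \tau_i \e_1$, such that $\bar{h} \equiv 1$ on $[0, \bar{\dl}_1]$, $\bar{h} \equiv 0$ on $[\bar{\dl}_2, \dl]$, $\bar{h}$ decreasing on $[\bar{\dl}_1, \bar{\dl}_2]$, and $\A_{0,\dl}(\bar{\qe}) < \A_{0,\dl}(\bar{q})$. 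I then pull back by defining $h(t) := \bar{h}(t_0 - t)$ and $\qe_i(t) := \bar{\qe}_i(t_0 - t)$ for $t \in [t_0 - \dl, t_0]$. The substitution $s = t_0 - t$, together with $K(\qd) = K(-\qd)$, converts the action integral so that $\A_{t_0-\dl,\,t_0}(\qe) = \A_{0,\dl}(\bar{\qe}) < \A_{0,\dl}(\bar{q}) = \A_{t_0-\dl,\,t_0}(q)$. Setting $\dl_1 := \bar{\dl}_1$ and $\dl_2 := \bar{\dl}_2$, one reads off that $h \equiv 1$ on $[t_0 - \dl_1, t_0]$, $h \equiv 0$ on $[t_0 - \dl, t_0 - \dl_2]$, and, because $\bar{h}$ is decreasing in $s$ while $s$ decreases as $t$ grows, $h$ is increasing on $[t_0 - \dl_2, t_0 - \dl_1]$. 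These are exactly properties (a)--(c), and the formula $\qe_i(t) = q_i(t) + \ep h(t) \tau_i \e_1$ is preserved under the substitution.

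The substantive content in this strategy is of course packaged inside Lemma \ref{lm:dfm4}, whose proof must rely on Sundman/Terracini-type asymptotics near the isolated $\I_0$-cluster collision and a careful estimate showing that the $H^1$-cost of the cut-off profile $\bar{h}$ is dominated by the potential-energy gain from splitting the colliding sub-cluster off the $yz$-plane along $\e_1$; condition \eqref{eqn:cond tau} is what guarantees that at least two colliding masses move in opposite $x$-directions, so that the splitting genuinely regularizes the singularity. Once Lemma \ref{lm:dfm4} is in hand, the only things to verify in the reduction are that $\bar{q}$ inherits the generalized-solution structure and that the qualitative profile of $h$ reverses in the expected way; both are routine. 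Accordingly I do not anticipate any new obstacle beyond those already handled by Lemma \ref{lm:dfm4} itself.
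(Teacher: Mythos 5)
Your reduction is correct: time-reversal invariance of the equations and of the Lagrangian (with $K$ even in the velocities) sends the right-endpoint collision of Lemma \ref{lm:dfm5} to the left-endpoint situation of Lemma \ref{lm:dfm4}, preserves the generalized-solution property, the isolated $\I_0$-cluster structure, condition \eqref{eqn:cond tau} and the action values, and the pulled-back profile $h(t)=\bar h(t_0-t)$ has exactly the reversed monotonicity required in (a)--(c), while the form $q_i(t)+\ep h(t)\tau_i\e_1$ of the deformation is preserved under the substitution $s=t_0-t$. This is, however, not how the paper handles the statement: the paper gives no derivation of Lemma \ref{lm:dfm5} from Lemma \ref{lm:dfm4}, but treats Lemmas \ref{lm:dfm4}, \ref{lm:dfm5} and \ref{lm:dfm6} on an equal footing as spatial analogues of Lemma 4.1, 4.2 and 4.3 of \cite{Y15c} (where $q$ is confined to a one-dimensional subspace), asserting that the proofs there generalize verbatim because the deformation is taken along a direction perpendicular to the subspace containing $q$. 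Your route buys economy: only the left-endpoint case needs the substantive blow-up/Sundman-type analysis, and the right-endpoint variant follows formally, in the same spirit as the time-reversal remark the paper itself makes at the end of Section \ref{sec:lemma} for binary collisions at $t=T$. On the other hand it is exactly as conditional as the paper's treatment, since Lemma \ref{lm:dfm4} is itself only cited here rather than proved, so you have localized, not removed, the dependence on \cite{Y15c}; and note the interior case of Lemma \ref{lm:dfm6} would still need a separate (two-sided) argument. One cosmetic point: the space $H^1([0,T],\rr^{3N})$ in the statement of Lemma \ref{lm:dfm5} should be read as $H^1([t_0-\dl,t_0],\rr^{3N})$, which is what your construction produces.
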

While the above two lemmas can be used on isolated collisions happening at the boundaries of a fundamental domain, the following is for collisions occurring in the interior. 

\begin{lm}
\label{lm:dfm6} Given a generalized solution $q \in H^1([T_1, T_2], \rr^{3N})$ that is contained in the $yz$-plane, if $t_0 \in (T_1, T_2)$ is an isolated $\I_0$-cluster collision moment and $\tau \in \mf{T}$ satisfies condition \eqref{eqn:cond tau}, then for $\ep>0$ small enough, there exist an $h \in H^1([t_0-\dl,t_0 + \dl], \rr)$ and a path $\qe \in H^1([t_0 -\dl, t_0 + \dl], \rr^{3N})$, with $(\qe_i(t))_{i \in \N}=(q_i(t)+ \ep h(t) \tau_i \e_1)_{i \in \N}, t \in [t_0-\dl, t_0 +\dl],$ satisfying $\A_{t_0-\dl, t_0 + \dl}(\qe)< \A_{t_0-\dl, t_0 +\dl}(q)$ and the following:
\begin{enumerate}
\item[(a).] $h(t)=1$, $\forall t \in [t_0-\dl_1, t_0+\dl_1]$, for some $0< \dl_1=\dl_1(\ep)< \dl$ small enough; 
\item[(b).] $h(t) =0$, $\forall t \in [t_0 -\dl, t_0-\dl_2] \cup [t_0 +\dl_2, t_0 +\dl]$, for some $\dl_1< \dl_2=\dl_2(\ep) < \dl$ small enough;
\item[(c).] $h(t)$ is increasing for $t \in [t_0-\dl_2, t_0 -\dl_1]$;
\item[(d).] $h(t)$ is decreasing for $t \in [t_0 +\dl_1, t_0 +\dl_2]$.  
\end{enumerate}
\end{lm}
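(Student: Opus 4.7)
The plan is to perturb $q$ in the $x$-direction (normal to the $yz$-plane containing $q$) by $\ep h(t)\tau_i\e_1$, which decouples the kinetic computation because $\xd_i \equiv 0$ for every $i$, and which creates $\ep$-order extra separation in precisely those pairs $\{i,j\}$ with $\tau_i \ne \tau_j$. Condition \eqref{eqn:cond tau} guarantees that at least one such pair lies inside the colliding cluster $\I_0$, and this pair produces the negative lower-order contribution that defeats the $O(\ep^2)$ kinetic cost.

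First I would take $h$ to be the piecewise linear bump with $h \equiv 1$ on $[t_0 - \dl_1, t_0 + \dl_1]$, $h \equiv 0$ off $[t_0 - \dl_2, t_0 + \dl_2]$, and affine on the two transition intervals, postponing the choice of $\dl_1 < \dl_2$ as functions of $\ep$. Properties (a)--(d) then hold by construction. Because $q$ lies in the $yz$-plane and the perturbation is along $\e_1$, a direct expansion gives
\[
K(\qd^\ep) - K(\qd) = \tfrac{\ep^2}{2}\,\dot h(t)^2 \sum_{i \in \I_0} m_i \tau_i^2,
\]
integrating to a kinetic cost of order $\ep^2/(\dl_2 - \dl_1)$. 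For the potential, pairs with $\tau_i = \tau_j$ contribute nothing, and pairs $\{i,j\}$ with $\tau_i \ne \tau_j$ but not both in $\I_0$ contribute only $O(\ep^2)$ since $|q_i - q_j|$ stays bounded below on $[t_0-\dl,t_0+\dl]$ (the collision at $t_0$ is isolated).

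The essential contribution comes from a pair $\{i,j\} \subset \I_0$ with $\tau_i \ne \tau_j$, supplied by \eqref{eqn:cond tau}. Since $q_i - q_j$ lies in the $yz$-plane, we have
\[
|q_i^\ep - q_j^\ep|^2 = |q_i - q_j|^2 + \ep^2 h(t)^2 (\tau_i - \tau_j)^2,
\]
so the integrand $|q_i^\ep - q_j^\ep|^{-1} - |q_i - q_j|^{-1}$ is $\le 0$ pointwise. Sundman-type asymptotics at the isolated collision give $|q_i(t) - q_j(t)| \le C|t - t_0|^{2/3}$ on a small neighborhood of $t_0$, and on the inner window where $h \equiv 1$ an elementary change of variable $s = |t-t_0|$ shows
\[
\int \Big(\tfrac{1}{\sqrt{|q_i-q_j|^2 + \ep^2(\tau_i - \tau_j)^2}} - \tfrac{1}{|q_i-q_j|}\Big)\,dt \le -C' \ep^{1/2},
\]
whose dominant contribution comes from the sub-window where $|q_i - q_j| \lesssim \ep$, of length $\sim \ep^{3/2}$, provided $\dl_1 \gg \ep^{3/2}$.

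Balancing the two estimates, a choice such as $\dl_1 = \ep$ and $\dl_2 = 2\ep$ (any pair with $\dl_2 - \dl_1 \gg \ep^{3/2}$ works) yields kinetic cost $O(\ep)$ against potential gain $-O(\ep^{1/2})$, so $\A_{t_0-\dl,t_0+\dl}(\qe) - \A_{t_0-\dl,t_0+\dl}(q) \le -C'\ep^{1/2} + O(\ep) < 0$ for $\ep$ small. The principal technical obstacle is securing the pair-level upper asymptotic $|q_i - q_j| \le C|t - t_0|^{2/3}$ for some pair $\{i,j\} \subset \I_0$ with $\tau_i \ne \tau_j$: for an isolated binary collision this is immediate from Proposition \ref{prop:sundman}, but for a genuine cluster collision it requires an invocation of Sundman--Sperling-type bounds on the cluster's moment of inertia, or alternatively one replaces the pointwise asymptotic by integrability of $|q_i - q_j|^{-1}$ combined with concavity of $r \mapsto (r^2+\ep^2)^{-1/2} - r^{-1}$ on a short enough window to extract the same $\ep^{1/2}$ gain.
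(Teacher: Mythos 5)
Your proof is correct and follows essentially the route the paper itself relies on: the paper does not prove Lemma \ref{lm:dfm6} directly but refers to the analogous Lemmas 4.1--4.3 of \cite{Y15c}, whose argument is exactly yours --- deform only along $\e_1$, perpendicular to the $yz$-plane so that the kinetic cross terms vanish, and beat the $O(\ep^2/(\dl_2-\dl_1))$ kinetic cost with the $-C\ep^{1/2}$ potential gain produced by a pair $\{i_0, i_1\} \subset \I_0$ with $\tau_{i_0} \ne \tau_{i_1}$ together with the Sundman--Sperling bound $|q_{i_0}(t)-q_{i_1}(t)| \le C|t-t_0|^{2/3}$ near the isolated cluster collision. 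The one caution concerns your closing ``alternative'': mere integrability of $|q_{i_0}-q_{i_1}|^{-1}$ without a quantitative approach rate does not by itself yield a gain beating the $O(\ep^2)$ kinetic cost (a slower rate such as $|t-t_0|^{1/4}$ would reduce the gain to order $\ep^2$), so the Sperling-type estimate on the cluster's moment of inertia should be viewed as an essential ingredient, as it is in \cite{Y15c}, rather than an optional one.
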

\begin{rem}
\begin{enumerate}
\item[(i).] When we apply the above lemmas, duo to the symmetric constraints imposed by certain group $G$, there may be isolated cluster collisions different from $\I_0$ at the same moment. In particular the corresponding $\tau$ needs to be chosen, so that the path still satisfies the symmetric constraints after the local deformation, as a result condition \eqref{eqn:cond tau} should be replaced by the following:
\begin{itemize}
\item $\tau_{i_0} \ne \tau_{i_1}$, for some $\{ i_0 \ne i_1\} \subset \I_0$;
\item $\tau_{i_1}\e_1 =\rho(g)(\tau_{i_0}\e_1)$, if $i_1 =\sg(g^{-1})(i_0) $, for some $i_0 \in \I_0$ and $g \in \ker(t_0)=\{g \in G|\; \tau(g^{-1})(t_0)= t_0 \}$; 
\item $\tau_i=0$, $\forall i \in \N \setminus \bigcup_{g \in \ker(t_0)}(\sg(g^{-1})(\I_0)).$
\end{itemize}
\item[(ii).] Although we stated the above lemmas for $q$ contained in the $yz$-plane, similar results hold as well if $q$ is contained in the $xz$-plane or $xy$-plane. One just need to replace $\e_1$ by $\e_2$ or $\e_3$ correspondingly in those lemmas.  

\end{enumerate}
\end{rem}
Similar local deformation lemmas for $q$ contained in a one dimensional subspace were proved in Lemma 4.1, 4.2 and 4.3 in \cite{Y15c}. The same proof can be generalized to our cases without any difficulty, see \cite[Remark 4.1]{Y15c}. We will not repeat the details here. The key point is that we only make deformations along a direction that is perpendicular to the subspace where $q$ belongs. 

\begin{lm}
\label{lm: isolated collision even} When $n$ is even ($n=2\ell$), for any $\om \in \Om_{[n/2]}$, let $q \in \Lmd^n_{\le, \om}$ be a generalized solution of \eqref{eqn:nbody}, which is contained either in the $yz$-plane or the $xz$-plane. If $t_0 \in (0, 1/2)$ is an isolated collision moment of $q$, then there is a $\qt \in \Lmd^n_{\le, \om}$ satisfying $\A_{1/2}(\qt) < \A_{1/2}(q). $ 
\end{lm}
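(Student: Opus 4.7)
Suppose without loss of generality that $q$ lies in the $yz$-plane; the $xz$-plane case is handled identically after interchanging the roles of $\e_1$ and $\e_2$. The plan is to deform $q$ perpendicular to its plane in two stages: a local perturbation near $t_0$ (and its $G_n$-orbit) via Lemma \ref{lm:dfm6}, then a global monotonizing correction to restore the $x$-monotonicity condition of Definition \ref{dfn:lmn}.

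Let $\I_0 \subset \N$ be the collision cluster of $q$ at $t_0$ and $\ker(t_0) \subset G_n$ the stabilizer of $t_0$ under $\uptau$. Using the modified compatibility conditions of the Remark after Lemma \ref{lm:dfm6}, I would choose $\tau \in \mf{T}$ supported on the $\ker(t_0)$-orbit of $\I_0$, with $\tau_{i_0} \neq \tau_{i_1}$ for some $\{i_0, i_1\} \subset \I_0$; such a $\tau$ exists because every $g \in \ker(t_0)$ satisfies $\rho(g)\e_1 \in \{\pm \e_1\}$ (for $n$ even, $\rho(g_1)\e_1 = \rho(h_1)\e_1 = \e_1$ and $\rho(h_2)\e_1 = -\e_1$), so a consistent sign pattern can be assigned across $\I_0$ and its $\ker(t_0)$-images. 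Apply Lemma \ref{lm:dfm6} with this $\tau$ to obtain $\qe$ agreeing with $q$ outside a small time-neighborhood of the $G_n$-orbit of $t_0$, satisfying $\A_{1/2}(\qe) < \A_{1/2}(q)$ for $\ep > 0$ small. Since the perturbation is along $\e_1$ only, the $y$- and $z$-coordinates of $\qe$ coincide with those of $q$, so both the $y$-monotonicity and the $\om$-topological conditions of Definition \ref{dfn:lmn} remain satisfied; $G_n$-equivariance is guaranteed by the choice of $\tau$.

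The remaining obstacle is the $x$-monotonicity, since $x_0^{\ep}$ has acquired $O(\ep)$-sized bumps at the $G_n$-orbit of $t_0$ and is no longer identically zero. To remedy this, I would superimpose on $\qe$ a second, global $G_n$-equivariant perturbation of the form $\ep\, \psi \,\e_1$ (propagated across masses by the $G_n$-action), where the underlying scalar profile $\psi_0$ on the fundamental domain $[0, n/4]$ is smooth, monotone non-decreasing, satisfies $\psi_0(n/4) = 0$, and has amplitude chosen to dominate the bumps of $x_0^{\ep}/\ep$. The $G_n$-symmetry constraints force $\psi_0$ to be anti-symmetric about $n/4$ and to reflect appropriately under $g_2, h_2$, but these are exactly the symmetries already obeyed by any $x$-component of a loop in $\lmn$, so such a $\psi_0$ is easy to produce. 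Choosing $\psi_0$ to vary over an $O(1)$-length sub-interval of $[0, n/4]$ makes the combined $\tilde{x}_0 = x_0^{\ep} + \ep \psi_0$ monotone non-decreasing, at kinetic and potential cost of order $O(\ep^2)$.

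The main difficulty is verifying that the second stage does not wipe out the gain from the first. The Sundman-blow-up analysis underlying Lemma \ref{lm:dfm6} (cf.\ \cite[Lemma 4.3]{Y15c}) produces an action decrease of strictly lower order than $\ep^2$, arising from the integral $\int \ep^2/r^3\,dt$ with $r \sim |t - t_0|^{2/3}$ effectively cut off at $r \sim \ep$, contributing a term of order $\ep^{1/2}$. Since the $O(\ep^2)$ cost of the monotonizing correction is strictly subdominant for $\ep$ small, the combined path $\qt$ lies in $\lmn_{\le, \om}$ and satisfies $\A_{1/2}(\qt) < \A_{1/2}(q)$, as required.
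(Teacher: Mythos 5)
Your first stage coincides with the paper's: apply Lemma \ref{lm:dfm6} along $\e_1$ with a $\tau$ compatible with the $G_n$-action (for even $n$ every $\rho(g)$ indeed sends $\e_1$ to $\pm\e_1$), note that the $y$- and $z$-components are untouched, and conclude that only the $x$-monotone constraint can fail. The genuine gap is in your second stage. The bump created in $x^{\ep}_j(t)=\ep h(t)\tau_j$ violates monotonicity exactly on the transition interval of $h$, whose length $\dl_2(\ep)-\dl_1(\ep)$ is \emph{not} bounded below by the statement of Lemma \ref{lm:dfm6}; there $|\dot h|$ is typically of size $(\dl_2-\dl_1)^{-1}\to\infty$. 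An additive correction $\ep\psi_0$ with $\psi_0$ varying over an $O(1)$-length interval has slope $O(\ep)$ and therefore cannot dominate the slope $\ep|\dot h|$ of the bump, so $x^{\ep}_0+\ep\psi_0$ need not be monotone: monotonicity is a pointwise derivative condition, not an amplitude condition. If instead you match slopes (essentially taking $\psi$ to be a copy of $h$ on the bad transition), the extra kinetic cost is of the same order $\ep^2\int|\dot h|^2$ as the cost already present in Lemma \ref{lm:dfm6}, and your comparison then rests on the quantitative claim that the gain is of order $\ep^{1/2}$ — a statement about the internals of the proof in \cite{Y15c}, not contained in the lemma as stated in this paper. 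Worse, the correction is propagated equivariantly, and for even $n$ the $h_2$-symmetry forces opposite $\e_1$-shifts on indices paired by $i\mapsto i+n/2$; near $t_0$ the deformed colliding masses are at mutual distance $O(\ep)$, so an $O(\ep)$ relative displacement there changes the potential by an amount that is \emph{not} $O(\ep^2)$ and can be of the same order as the gain, with uncontrolled sign. So the announced ``$O(\ep^2)$ kinetic and potential cost'' is unjustified on both counts.

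The paper avoids all quantitative comparison. After the local deformation it repairs $x$-monotonicity by the reflection-and-shift trick of Lemma \ref{lm:mc}: on $[0,t_0]$ (respectively $[t_0,1/2]$, depending on which half of $\I$ contains $j$) it replaces $x^{\ep}_j(t)$ by $2x^{\ep}_j(t_0)-x^{\ep}_j(t)$, as in \eqref{eq: qt j1}, and translates the masses on the far side by the constant $\tilde{x}_j(0)$ as in \eqref{eq: qt i1}, defining the remaining bodies by symmetry. This leaves $|\dot{\tilde x}_j|=|\dot x^{\ep}_j|$, hence the kinetic energy, unchanged, and it only increases pairwise $x$-separations, hence does not increase the potential; therefore $\A_{1/2}(\qt)\le\A_{1/2}(\qe)<\A_{1/2}(q)$ follows directly from the qualitative strict inequality of Lemma \ref{lm:dfm6}, with no estimate on the size of the gain, no lower bound on the transition width of $h$, and no danger near the collision. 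If you want to keep your additive-profile idea, you must (i) build $\psi$ from $h$ itself so the slope condition holds, (ii) re-derive the $\ep^{1/2}$-versus-$\ep^2$ hierarchy from the blow-up proof of the deformation lemma, and (iii) check the sign of the near-collision potential change produced by the equivariant correction — at which point the paper's cost-free reflection is simply the cleaner route.
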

\begin{proof}
We will only give the details for the case that $q$ is contained in the $yz$-plane, while the other can be proven similarly. Due to the symmetric constraints, there may be more than one collision clusters at $t=t_0$. However there always exists an $\I_0$-cluster collision at $t=t_0$ satisfying $\I_0 \cap \I \ne \emptyset$, where 
\begin{equation}
\label{eq: I isolated collision} \I = \{0, \dots, [n/4]\} \cup \{2n -[n/4], \dots, 2n-1\}. 
\end{equation}

First assume there is a $j \in \I_0 \cap \{0, \dots, [n/4]\} \ne \emptyset$. We choose a $\tau \in \mf{T}$ with each $\tau_i = 0$, except the following four. 
$$ \tau_{j} = \tau_{2\ell +j} = -1, \; \tau_{\ell + j} = \tau_{3\ell+j} =1. $$ 
Then by lemma \ref{lm:dfm6}, we can make a local deformation of $q$ along the direction perpendicular to the $yz$-plane near $t_0$ to get a new path $\qe(t)$, $t \in [0, 1/2]$ with $\A_{1/2}(\qe) < \A_{1/2}(q)$. Since $t_0 \in (0, 1/2)$ and only the path of $m_{i\ell +j}$, $i=0, 1, 2,3$, are deformed, $\qe$ still satisfies the symmetric, $\om$-topological constraints and the $y$-monotone constraints, but violates the $x$-monotone constraints. Because of this we make a further deformation of $\qe$ as follows:
\begin{equation} \label{eq: qt j1}
 \qt_j(t) = \begin{cases}
 \qe_j(t) + 2 (x^{\ep}_j(t_0)- x^{\ep}_j(t))\e_1, \;\; & \forall t \in [0, t_0], \\
 \qe_j(t), \;\; & \forall t \in [t_0, 1/2], 
 \end{cases}
 \end{equation} 
 and for any $t \in [0, 1/2]$
 \begin{equation} \label{eq: qt i1}
 \qt_i(t) = \begin{cases}
 \qe_i(t) + \tilde{x}_{j}(0), \; & \text{ if } i \in \{0, \dots, j-1\} \cup \{2n-j, \dots, 2n-1\}, \\
 \qe_i(t), \; & \text{ if } i \in \{j+1, \dots, [\frac{n}{4}]\} \cup \{2n - [\frac{n}{4}], \dots, 2n-1-j\}, 
 \end{cases}. 
 \end{equation}
As usual, the paths of $m_i$, $i \in \N \setminus \I$ follows from symmetric constraints once we have above. Now the new path $\qt$ satisfies the $x$-monotone constraints as well and belongs to $\Lmd^n_{\le, \om}$. Furthermore the above deformation preserves the kinetic energy and do not increase the potential energy, so $\A_{1/2}(\qt) \le \A_{1/2}(\qe) < \A_{1/2}(q)$. 

Now let's assume there is a $j \in \I_0 \cap \{2n - [n/4], \dots, 2n-1 \} \ne \emptyset$, we apply Lemma \ref{lm:dfm6} with a $\tau \in \mf{T}$ satisfying $\tau_i =0$, for every $i$ except the following four, 
$$ \tau_j = \tau_{j - 2\ell} = -1, \; \; \tau_{j-\ell} = \tau_{j -3\ell} =1. $$
The rest of argument is exactly the same as above, except $(\qt_i(t))_{i \in \I}$, $t \in [0, 1/2]$, should be defined as follows: 
\begin{equation} \label{eq: qt j2}
\qt_j(t)= \begin{cases}
\qe_j(t), \; \; & \forall t \in [0, t_0], \\
\qe_j(t) + 2(x^{\ep}_j(t_0)-x^{\ep}_j(t))\e_1, \;\; & t \in [t_0, 1/2], 
\end{cases}
\end{equation}
and for any $t \in [0, 1/2]$
\begin{equation} \label{eq: qt i2}
\qt_i(t) = \begin{cases}
\qe_i(t) + \tilde{x}_j(1/2), \; & \text{ if } j \in \{0, \dots, 2n-1-j\} \cup \{ j+1, \dots, 2n-1 \}, \\
\qe_i(t), \; & \text{ if } i \in \{2n-j, \dots, [\frac{n}{4}]\} \cup \{2n - [\frac{n}{4}], \dots, j-1 \}. 
\end{cases}
\end{equation}
\end{proof}

\begin{lm}
\label{lm: isolated collision odd} When $n$ is odd ($n=2\ell+1$), for any $\om \in \Om_{[n/2]}$, let $q \in \Lmd^n_{\le, \om}$ be a generalized solution of \eqref{eqn:nbody}, which is contained either in the $yz$-plane or the $xz$-plane. If $t_0 \in (0, 1/4)$ is an isolated collision moment of $q$, then there is a $\qt \in \Lmd^n_{\le, \om}$ satisfying $\A_{1/4}(\qt) < \A_{1/4}(q).$ 
\end{lm}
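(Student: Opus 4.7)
The plan is to adapt the proof of Lemma \ref{lm: isolated collision even} to the odd case. The two key structural changes are: (i) the fundamental domain shrinks from $[0,1/2]$ to $[0,1/4]$, so the localized deformation must be confined to a neighborhood of $t_0 \in (0, 1/4)$; and (ii) the action of $h_2$ for odd $n$ is $\uptau(h_2)t = 1/2 - t$ with $\rho(h_2)=\R_{yz}$, so $h_2 \notin \ker(t_0)$ for $t_0 \in (0,1/4)$. Consequently the $\ker(t_0)$-orbit of an index in the collision cluster has only two elements (via $h_1$) rather than the four that appeared in the even case.

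Assume without loss of generality that $q$ is contained in the $yz$-plane (the $xz$-plane case being parallel with $\e_1$ replaced by $\e_2$). Choose a fundamental index set, e.g.\ $\I = \{0,\dots,[n/4]\} \cup \{2n-[n/4],\dots,2n-1\}$, so that the positions of $\{q_i(t)\}_{i \in \I}$ on $[0,1/4]$ determine the whole loop through $G_n$. Since $t_0 \in (0, 1/4)$ is an isolated collision moment, pick an $\I_0$-cluster collision with $\I_0 \cap \I \neq \emptyset$ and select $j \in \I_0 \cap \I$. Using the remark after Lemma \ref{lm:dfm6}, choose $\tau \in \mf{T}$ supported on the $\ker(t_0)$-orbit $\{j, n+j\}$ (together with analogous support on the orbit of any second index of $\I_0 \cap \I$ needed to satisfy \eqref{eqn:cond tau}); since $\rho(h_1)\e_1 = \R_x \e_1 = \e_1$, this forces $\tau_{n+i} = \tau_i$, and two distinct nonzero values on $\I_0$ are arranged by using two indices of $\I_0$ in $\I$.

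With this $\tau$, apply Lemma \ref{lm:dfm6} to get a local deformation $\qe$ of $q$ on a small window $[t_0-\dl,t_0+\dl] \subset (0,1/4)$ satisfying $\A_{t_0-\dl,t_0+\dl}(\qe) < \A_{t_0-\dl,t_0+\dl}(q)$, and extend $\qe = q$ outside this window on $[0,1/4]$. By construction $\qe$ only moves the chosen masses in the $\e_1$ direction, so the $y$-monotone and $\om$-topological constraints, as well as $G_n$-equivariance (extended via symmetries to the full loop) are preserved. The $x$-monotone constraint may fail, but is restored exactly as in \eqref{eq: qt j1}–\eqref{eq: qt i2}: add to each $\qe_i(t)$ a piecewise-constant $x$-shift so that $\xt_j$ is monotone on $[0,1/4]$ and $\xt_0(0), \xt_0(1/4)$ land at the correct boundary values; this adjustment does not alter kinetic energy and weakly separates masses along $\e_1$, hence does not increase potential energy, giving $\qt \in \lmn_{\le,\om}$ with $\A_{1/4}(\qt) \le \A_{1/4}(\qe) < \A_{1/4}(q)$.

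The main obstacle I foresee is the degenerate case when $\I_0 \cap \I$ reduces to a single index $j$, so that condition \eqref{eqn:cond tau} cannot be met on the two-element orbit $\{j, n+j\}$ alone (the $h_1$-symmetry forces $\tau_{n+j} = \tau_j$). Geometrically this is a binary collision of $m_j$ and $m_{n+j}$ on the $x$-axis at the origin of the $yz$-plane. Handling this will require either enlarging $\I_0$ by incorporating other simultaneous symmetric clusters whose indices also lie in $\I$, or (as in the even-case proof) splitting into sub-cases indexed by the location of $j$ within the various arcs of the fundamental domain for $n = 4l+1$ and $n = 4l+3$ (paralleling Cases 7–10 of Lemma \ref{lm:odd}). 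Beyond that wrinkle, the argument is a routine transcription of the even-case proof with the $[0,1/2]$ and $[0,1/4]$ switched and the orbit size reduced from four to two.
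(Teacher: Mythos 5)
Your overall strategy does coincide with the paper's: for odd $n$ the element $h_2$ reverses time, so the stabilizer of a generic $t_0\in(0,1/4)$ is $\{1,h_1\}$, the relevant index orbit is the pair $\{j,j+n\}$, $\rho(h_1)\e_1=\e_1$ forces $\tau_{j+n}=\tau_j$, and after the local deformation of Lemma \ref{lm:dfm6} one restores the $x$-monotone constraint by the shift/reflection trick of \eqref{eq: qt j1}--\eqref{eq: qt i2}. (The paper phrases the reduction slightly differently, replacing $t_0\in(0,1/4)$ by an isolated collision moment $t_1\in(0,1/2)\setminus\{1/4\}$ and then repeating the proof of Lemma \ref{lm: isolated collision even} on $[0,1/2]$.) The essential difference, and the source of your difficulty, is the choice of $\tau$: the paper simply takes $\tau_j=\tau_{j\pm n}=-1$ and $\tau_i=0$ for every other index, so the inequality required in (the modified form of) \eqref{eqn:cond tau} is supplied by any member of $\I_0$ outside $\{j,j\pm n\}$, which carries the value $0$; no second index of $\I_0$ inside $\I$, and no second nonzero value of $\tau$, is needed.

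The genuine gap is that your ``degenerate case'' is left unresolved: you flag it and list possible remedies (enlarging $\I_0$, splitting into sub-cases as in the proof of Lemma \ref{lm:odd}), but carry none of them out, and this is precisely the point where the odd case is not a mechanical transcription of the even one. Moreover the diagnosis is not accurate: with the paper's choice of $\tau$ the construction fails only when $\I_0=\{j,j+n\}$ exactly, i.e.\ a binary collision of an $h_1$-pair at the origin (the only point of the plane fixed by $\R_x$); the condition that $\I_0\cap\I$ be a singleton is not in itself an obstruction, since the index of $\I_0$ providing the second value of $\tau$ need not lie in $\I$ and need not receive a nonzero value. Your insistence on two distinct nonzero values on orbits inside $\I_0\cap\I$ makes the problematic set of configurations strictly larger than necessary, and none of it is treated. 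A secondary inaccuracy compounds this: your claim that the data $(q_i)_{i\in\I}$, $t\in[0,1/4]$, with $\I$ as in \eqref{eq: I isolated collision}, determines the whole loop is false for odd $n$ --- since only $h_1$ fixes a generic time, one needs a transversal of the $h_1$-pairs (about $n$ masses, as in the index set used in the proof of Lemma \ref{lm:B3}), or equivalently $q_0$ on $[0,n/4]$ --- so ``extend via symmetries'' as written does not determine $\qt$ outside your deformation window. The bookkeeping can be repaired, but the unhandled cluster configuration leaves the argument incomplete where it most needed a proof.
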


\begin{proof}
We will only give the details for the case that $q$ is contained in the $yz$-plane, while the other can be proven similarly. Notice that due to the symmetric constraint, the existence of an isolated collision moment $t_0 \in (0, 1/4)$ is equivalent to the existence of an isolated collision moment $t_1 \in (0, 1/2) \setminus \{1/4\}$.  

The rest of the proof is the same as Lemma \ref{lm: isolated collision even}, except that when we apply Lemma \ref{lm:dfm6}, if $j \in \I_0 \cap \{0, \dots, [n/4]\} \ne \emptyset$, then the proper $\tau \in \mf{T}$ should be
$$ \tau_j = \tau_{j +n} =-1, \text{ and } \tau_i = 0, \; \forall i \in \N \setminus \{j, j+n\}, $$
and if $j \in \I_0 \cap \{2n -[n/4], \dots, 2n-1 \}\ne \emptyset$, then the proper $\tau \in \mf{T}$ should be 
$$ \tau_j = \tau_{j -n} = -1, \text{ and } \tau_i =0, \; \forall i \in \N \setminus \{j, j-n \}. $$
\end{proof}

Now we will give a proof of Lemma \ref{lm:nondeg} through the following three lemmas. 

\begin{lm}
\label{lm:B1} For any $\om \in \Om_{[n/2]}$, if $\qo$ is a minimizer of the action functional in $\lmn_{\le, \om}$, then $x_0^{\om}(0)<0$. 
\end{lm}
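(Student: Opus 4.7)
The plan is to argue by contradiction: suppose $x_0^\om(0) = 0$, and I will produce $\qt \in \lmn_{\le, \om}$ with $\A_n(\qt) < \A_n(\qo)$. First I claim this assumption forces $\qo$ to be planar, contained in the $yz$-plane. Indeed, the $x$-monotonicity of $q_0^\om$ on $[0, n/4]$, together with $x_0^\om(0) = 0$ and $x_0^\om(n/4) = 0$ from \eqref{eqn:q0symmetry}, gives $x_0^\om \equiv 0$ on $[0, n/4]$. The symmetries \eqref{eqn:g1g2} and \eqref{eqn:g1h2} then extend this to $x_0^\om \equiv 0$ on $\rr/n\zz$, and \eqref{eqn:g1}, \eqref{eqn:h1} propagate it to $x_i^\om \equiv 0$ for every $i \in \N$. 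Because variations supported in the $yz$-plane are compatible with the (trivially saturated) $x$-monotone constraint, $\qo$ is a smooth solution of \eqref{eqn:nbody} off the collision set; in particular, $\qo$ is a generalized solution in the sense of Section \ref{sec:app}.

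Next I would locate an isolated collision moment of the planar loop $\qo$. Since $\om \in \omn$ there exist $i_1 < i_2$ in $\{0, \dots, [n/2]\}$ with $\om_{i_1} \ne \om_{i_2}$; take WLOG $\om_{i_1} = 1$ and $\om_{i_2} = -1$. The topological constraint \eqref{eqn:tc} gives $z_0^\om(i_1/2) \ge 0 \ge z_0^\om(i_2/2)$, and continuity provides $t^* \in [i_1/2, i_2/2]$ with $z_0^\om(t^*) = 0$, so that $q_0^\om(t^*) = (0, y_0^\om(t^*), 0)$ lies on the $y$-axis. For even $n$, combining \eqref{eqn:g1h2} with \eqref{eqn:h1} yields
$q_{n/2}^\om(t^*) = q_0^\om(t^* + n/2) = \R_z q_0^\om(t^*) = (0, -y_0^\om(t^*), 0) = \R_x q_0^\om(t^*) = q_n^\om(t^*)$,
an isolated collision at $t^*$, with isolation coming from the monotonicity of $y_0^\om$ on $[0, n/4]$. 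For odd $n$, the $yz$-plane reduction of \eqref{eqn:g1h2} becomes $q_0^\om(n/2 - t) = q_0^\om(t)$, hence $q_1^\om(t^*) = q_0^\om(1/2 - t^*)$; exploiting that the weak closure $\lmn_{\le, \om}$ permits $z_0^\om(i/2) = 0$, one may select $t^* = i/2$ on the half-integer grid and again identify a pair of masses coinciding at an isolated moment (combining, if needed, with the $g_1 g_2$-symmetry).

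With an isolated collision in hand, I would conclude by invoking Lemma \ref{lm: isolated collision even} or Lemma \ref{lm: isolated collision odd}---and, if the collision sits on the boundary of the fundamental domain, the obvious boundary variant built from Lemma \ref{lm:dfm4} or Lemma \ref{lm:dfm5} in place of Lemma \ref{lm:dfm6}---to produce the desired $\qt \in \lmn_{\le, \om}$ with $\A_n(\qt) < \A_n(\qo)$, contradicting the minimality of $\qo$. The main obstacle is the odd-$n$ portion of the collision-finding step: the zero $t^*$ of $z_0^\om$ is not a priori forced to lie on the half-integer grid, and pinning down a genuine coincidence of two masses at $t^*$ requires careful combination of the $g_1 h_2$-symmetry in the $yz$-plane with the slack afforded by the weak closure $\lmn_{\le, \om}$.
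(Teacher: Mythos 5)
Your first step (planarity in the $yz$-plane and the passage to a generalized solution) is the same as the paper's, although you only address the $x$-monotone constraint there; the paper also has to argue that the $y$-monotone constraint is inactive (via Proposition \ref{prop:smc} and the argument of Lemma \ref{lm: x y dot >0}) before it can conclude that $\qo$ solves \eqref{eqn:nbody} off the collision set. The genuine gap, however, is in your collision-finding step, and it is not a technicality you can defer: for odd $n$ a zero $t^*$ of $z^{\om}_0$ only places $q^{\om}_0(t^*)$ on the $y$-axis and, as you concede, forces no coincidence of two bodies; no amount of ``slack from the weak closure'' fixes this, because the collision in the planar case comes from a different mechanism that does not involve the topological constraints at all. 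Since $\R_{yz}$ fixes the $yz$-plane pointwise, \eqref{eqn:g1h2} together with \eqref{eqn:g1} gives, for odd $n$ with $\ell=(n-1)/2$, $q_0(1/4)=\R_{yz}q_0(n/2-1/4)=q_\ell(1/4)$, i.e.\ a collision at $t=1/4$; and for even $n$, $y_0(0)=0$ from \eqref{eqn:q0symmetry} puts $q_0(0)$ on the $z$-axis, so $q_{n/2}(0)=\R_z q_0(0)=q_0(0)$, a collision at $t=0$. This is why the lemma holds for every $\om\in\Om_{[n/2]}$ without condition \eqref{eqn:odd om}; your detour through the sign change of $\om$ is unnecessary for even $n$ and a dead end for odd $n$.

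Two further points would break your argument as written. First, isolation of the collision does not follow from the monotonicity of $y^{\om}_0$: collisions of your pair $(m_{n/2},m_n)$ occur exactly at the zeros of $z^{\om}_0$, which a priori need not be isolated; the paper instead argues that if the symmetry-forced collision moment is not isolated, then there is an isolated collision moment in the interior of the fundamental domain (by \cite[Section 5]{FT04}) and invokes Lemma \ref{lm: isolated collision even} or \ref{lm: isolated collision odd}. Second, the ``obvious boundary variant'' you appeal to is exactly where the paper does its real work: the symmetry-forced collisions sit at the boundary of the fundamental domain ($t=0$, resp.\ $t=1/4$), so one must apply Lemma \ref{lm:dfm4} or \ref{lm:dfm5} with a $\tau$ compatible with all the symmetries present at that moment (see the remark following Lemma \ref{lm:dfm6}), verify that the local deformation preserves the $\om$-topological and monotone constraints, and, for odd $n$, perform the additional reflect-and-translate deformation $\qt^{\ep}_i=\R_{yz}\qe_i+2x^{\ep}_i(1/4)\e_1$ for $i\in\{0,n\}$ to restore $x$-monotonicity before the comparison of actions yields the contradiction. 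Without these steps the proof is not complete.
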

\begin{proof}
For simplicity, let $q =\qo$. By a contradiction argument, assume the desired result does not hold. Then $x_i(t) =0$, for all $i$ and $t$, as $q$ is $x$-monotone. Hence the masses stay on the $yz$-plane all the time.

Notice that as $q(t)$ is a minimizer of the action functional in $\Lmd^n_{\le, \om}$, it is a generalized solution of \eqref{eqn:nbody} (see \cite[Section 5]{FT04}. A few explanation may be needed as the $x$-monotone and $y$-monotone constraints were required here. First under the above assumption $q$ belongs to the $yz$-plane, so the $x$-monotone constraints will not be a problem. For the $y$-monotone constraint, if $y_0(n/4)=0$, then $q$ being $y$-monotone implies the $y$ component of every $q_i$ is zero. Just like what said about the $x$-monotone constraint, it will not be a problem. If $y_0(n/4)>0$, by Proposition \ref{prop:smc}, $q$ must be strictly $y$-monotone. Then by a similar argument used in the proof of Lemma \ref{lm: x y dot >0}, we can show it satisfies \eqref{eq: y dot >0}. Again the $y$-monotone constraint will not be a problem.   

In the following we will show our assumption implies the existence of at least one isolated collision. Then we can find a new path in $\lmn_{\le, \om}$ with action value strictly smaller than $q$'s, which is a contradiction. 

%We set the subset of indices $\I=\{0, \dots, n-1\}$ for the rest of the proof.

Recall that for even $n$ $(n=2\ell)$, the action of $h_2$ implies, 
$$ (x_0, y_0, z_0)(t) = (-x_{\ell}, -y_{\ell}, z_{\ell})(t),\;\; \forall t.$$ 
By our assumption, $x_0(0)=0$ and by \eqref{eqn:q0symmetry}, $y_0(0)=0$. Therefore $q_0(0)=q_{\ell}(0)$. As a result, $0$ is an $\I_0$-cluster collision moment, with at least $\{0, \ell\} \subset \I_0$. We may assume it is also isolated, as otherwise by results from \cite[Section 5]{FT04}, there must be an isolated collision moment in $(0, 1/2)$ close to $0$, then a contradiction can be reached by Lemma \ref{lm: isolated collision even}. Now choose a $\tau\in \mf{T}$ with
\begin{equation}
 \label{eqn:tau} \tau_0=\tau_n=-1, \tau_{\ell} =\tau_{\ell+n} =1, \text{ and } \tau_i =0, \; \forall i \in \N \setminus \{0, \ell, n, \ell+ n \}.
 \end{equation} 
 We can use Lemma \ref{lm:dfm4} to make a local deformation of $q(t)$, $t \in [0, 1/2]$, near the collision moment $t=0$. Notice that only the paths of $m_i$, $i\in \{0, \ell, n, n+\ell \}$ are deformed during the above process. Figure \ref{fig:dfm1}(a) shows how they are deformed. 

\begin{figure}
  \centering
  \includegraphics[scale=0.70]{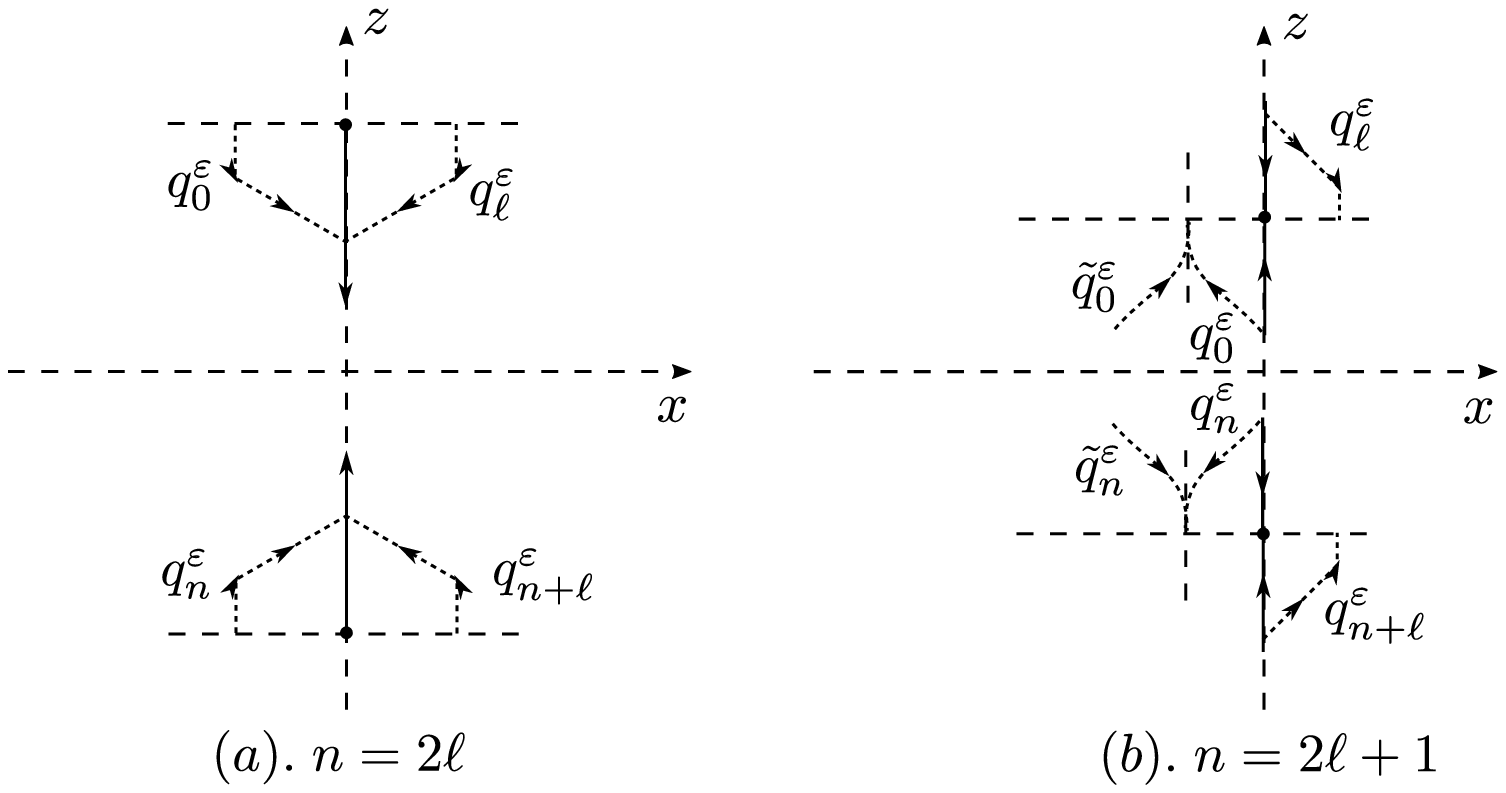}
  \caption{}
  \label{fig:dfm1}
\end{figure}

As we can see the deformed path is $x$ and $y$-monotone, and the $\om$-topological constraints are preserved as well. Therefore $\qe \in \lmn_{\le, \om}$. By Lemma \ref{lm:dfm4}, $\A_{1/2}(\qe)< \A_{1/2}(q)$, which is a contradiction. 

When $n$ is odd $(n=2\ell+1)$, the action of $h_2$ implies, 
$$(x_0, y_0, z_0)(1/2-t)= (-x_\ell, y_\ell, z_\ell)(t), \;\; \forall t.$$
By our assumption, $x_0(1/4)=x_{\ell}(1/4)=0$, so $q_0(1/4)=q_{\ell}(1/4)$ and $1/4$ is an $\I_0$-cluster collision moment with at least $\{0, \ell\} \subset \I_0$. Again we assume it is also isolated, otherwise there is a isolated collision moment nearby and by Lemma \ref{lm: isolated collision odd}, there is a contradiction. Let $\tau$ be the same as above. By Lemma \ref{lm:dfm5}, we make a local deformation of $q(t)$, $t \in [0, 1/4]$, near $t=1/4$, and it gives us a new path $\qe(t)$, $t \in [0, 1/4]$ with $\A_{1/4}(\qe) < \A_{1/4}(q)$. 

Only the paths of $m_i$, $i \in \{0, \ell, n, n+\ell\}$ were deformed during the above process. The $\om$-topological constraints and the property of being $y$-monotone were preserved, see Figure \ref{fig:dfm1}(b). However for odd $n$, being $x$-monotone means the above four masses can not move backward in the $x$-direction from $t=0$ to $t=1/4$. As we can see from Figure \ref{fig:dfm1}(b), $\qe_0(t)$ and $\qe_n(t)$ violate this. Therefore we make a further deformation and define a new path $\qt^{\ep}(t), t \in [0, 1/4]$ as 
$$ 
\qt^{\ep}_i(t) = 
\begin{cases}
\qe_i(t), & \text{ if } i \in \N \setminus \{0, n\}, \\
\R_{yz}\qe_i(t) + 2 x^{\ep}_i(1/4)\e_1, & \text{ if } i \in \{0, n \},  \\
\end{cases} 
\;\;\;\forall t \in [0,1/4].
$$
Again see Figure \ref{fig:dfm1}(b) for an illuminating picture. Now $\qt^{\ep}$ is $x$ and $y$-monotone and is contained in $\lmn_{\le, \om}$. By the definition of $\qt^{\ep}$, $\A_{1/4}(\qt^{\ep}) \le \A_{1/4}(q^{\ep})$. Then $\A_{1/4}(\qt^{\ep})< \A_{1/4}(q)$, which is absurd. 
\end{proof}

\begin{rem} \label{rem:strictly x mono}
For any $\om \in \Om_{[n/2]}$ (it does not need to satisfy condition \eqref{eqn:odd om}), Lemma \ref{lm:B1} and Proposition \ref{prop:smc} immediately implies any action minimizer $\qo \in \lmn_{\le, \om}$ must be strictly $x$-monotone, so it cannot be a planar path in the $yz$-plane.
\end{rem}

\begin{lm}
\label{lm:B2} When $n$ is even $(n=2\ell)$, for any $\om \in \Om_{[n/2]}$, if $\qo$ is a minimizer of the action functional in $\lmn_{\le, \om}$, then $y^{\om}_0(n/4)>0$.
\end{lm}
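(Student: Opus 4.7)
The plan is to argue by contradiction, paralleling Lemma~\ref{lm:B1} but dealing with a binary collision at $t = n/4$ instead of $t = 0$. Let $q = \qo$ and assume toward contradiction that $y_0^\om(n/4) = 0$. Since $q$ is $y$-monotone on $[0, n/4]$ with $y_0(0) = 0$ by \eqref{eqn:q0symmetry}, we have $y_0 \equiv 0$ on $[0, n/4]$; the $G_n$-symmetries then force $y_i \equiv 0$ on all of $\rr/n\zz$ for every $i \in \N$, so $q$ lies entirely in the $xz$-plane. By Remark~\ref{rem:strictly x mono}, $q$ is strictly $x$-monotone, so $x_0(n/4) = 0$ is attained only at $t = n/4$ in $[0, n/4]$, and $q_0(n/4) = (0, 0, z_0(n/4))$ lies on the $z$-axis.

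The $h_2$-equivariance (for even $n$) gives $q_{n/2}(t) = \R_z q_0(t)$, hence $q_{n/2}(n/4) = q_0(n/4)$; combined with its $h_1$-image $q_{n+n/2}(n/4) = q_n(n/4)$, this produces binary collisions at $t = n/4$. In the planar setting, $q_i(t) = q_j(t)$ is equivalent to $2t + i + j \equiv 0 \pmod n$, so the set of collision moments is discrete with spacing $1/2$ and $t_0 = n/4$ is isolated. Translating to the fundamental domain $[0, 1/2]$ via $q_i(t) = q_0(t+i)$: when $n = 4\ell$ the collision becomes $q_\ell(0) = q_{3\ell}(0)$ at $t_0 = 0$ (together with its $h_1$-image), while when $n = 4\ell + 2$ it becomes $q_\ell(1/2) = q_{3\ell+1}(1/2)$ at $t_0 = 1/2$.

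Apply Lemma~\ref{lm:dfm4} (when $n = 4\ell$, $t_0 = 0$) or Lemma~\ref{lm:dfm5} (when $n = 4\ell + 2$, $t_0 = 1/2$), deforming in the $\e_2$-direction per Remark~(ii) following Lemma~\ref{lm:dfm6}. Writing $\{j_1, j_2\}$ for the colliding pair in the fundamental domain, choose $\tau \in \mf{T}$ supported on $\{j_1, j_2, j_1 + n, j_2 + n\}$ with $\tau_{j_1} = \tau_{j_2 + n} = +1$ and $\tau_{j_2} = \tau_{j_1 + n} = -1$, so that the colliding bodies separate along $\pm \e_2$. The deformed path $\qe$ lies in $\lmn_{\le, \om}$ because the $x$- and $z$-coordinates are unchanged (preserving strict $x$-monotonicity and the $\om$-topological constraints), while the induced loop $q_0^\ep$ carries a $+\e_2$-bump of height $\ep$ localized near $s = n/4$ (with a $-\e_2$-bump near $s = 3n/4$ by the $h_2$-symmetry of $q_0$), so $y_0^\ep$ is non-decreasing on $[0, n/4]$ with $y_0^\ep(n/4) = \ep > 0$. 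The deformation lemma then yields $\A_{1/2}(\qe) < \A_{1/2}(q)$, contradicting the minimality of $q$ in $\lmn_{\le, \om}$.

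The main technical obstacle is verifying the $\tau$-compatibility condition stated in the Remark after Lemma~\ref{lm:dfm6}. This requires identifying the time-fixing subgroup $\ker(t_0)$, whose nontrivial generators beyond $h_1, h_2$ include a specific $g_2 g_1^k \in D_n$ (with $k = n - 1$ when $t_0 = 0$, and $k = 0$ when $t_0 = 1/2$); one must check that the orbit of $\{j_1, j_2\}$ under $\sg(\ker(t_0))$ is exactly the four-point support of $\tau$, and that the sign assignments are consistent with the identities $\R_x \e_2 = \R_z \e_2 = \R_{xz} \e_2 = -\e_2$. This is routine but is the main source of case-splitting by $n \bmod 4$.
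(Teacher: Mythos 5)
Your overall route is the paper's own: argue by contradiction, deduce that the minimizer lies in the $xz$-plane and is strictly $x$-monotone (Remark \ref{rem:strictly x mono}), locate the collision forced by the $h_2$-symmetry at loop time $n/4$ (i.e.\ between $m_\ell$ and $m_{3\ell}$ at $t=0$ when $n=4\ell$, between $m_\ell$ and $m_{3\ell+1}$ at $t=1/2$ when $n=4\ell+2$), and remove it with Lemma \ref{lm:dfm4} resp.\ Lemma \ref{lm:dfm5} in the $\e_2$-direction; your $\tau$ is exactly the paper's choice, and your check that the resulting $y$-bump preserves $x$/$y$-monotonicity, the $\om$-constraints and equivariance is correct. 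The genuine gap is your isolatedness claim. The asserted equivalence ``$q_i(t)=q_j(t)$ iff $2t+i+j\equiv 0 \pmod n$'' is only valid for pairs whose collision is forced through equality of the $x$-coordinates (distinct residues mod $n$); it is false for the pairs $(i,\,n+i)$. Indeed $q_{n+i}(t)=\R_x q_i(t)$, and in the $xz$-plane $\R_x$ fixes the $x$- and $y$-components and flips $z$, so $m_i$ and $m_{n+i}$ collide exactly when $z_0(t+i)=0$, a condition with no relation to $2t+i+j$. The zero set of $z_0$ is closed and (by finiteness of the action) of measure zero, but nothing in your argument makes it discrete, so collision moments may a priori accumulate at $t_0=n/4$. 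Isolatedness is a hypothesis of Lemmas \ref{lm:dfm4} and \ref{lm:dfm5}, so your deformation step is not yet licensed. The paper does not claim discreteness: it says ``we may assume the collision is isolated'' because otherwise, by the structure results of \cite[Section 5]{FT04}, there is an isolated collision moment in the interior of the fundamental domain, and then Lemma \ref{lm: isolated collision even} (Lemma \ref{lm:dfm6} followed by a monotonicity-restoring correction) already gives a contradiction. You need this branch, or some replacement for it, to close the proof.

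Two smaller points. Lemmas \ref{lm:dfm4}--\ref{lm:dfm5} are stated for \emph{generalized solutions}, and because of the monotone constraints it is not automatic that the minimizer is one; the paper spends a paragraph at the start of the proof of Lemma \ref{lm:B1} (invoked again in Lemma \ref{lm:B2}) verifying this before any deformation lemma is applied, and you should do the same. Also, your description of the time-stabilizer is slightly off: elements of $G_n$ involving $g_2$ reverse time about $t=1/2$ only, and the reflection relevant at $t_0=0$ arises from combining $g_2$ with the index shift $g_1$; this does not affect your conclusion, since the compatibility conditions ($\R_x\e_2=\R_z\e_2=\R_{xz}\e_2=-\e_2$) are indeed satisfied by your $\tau$, which coincides with the one used in the paper.
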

\begin{proof}
For simplicity, let $q =\qo$. By a contradiction argument, assume the statement is not true. Then $q$ being $y$-monotone implies the masses must travel on the $xz$-plane all the time, so $y_i(t)=0$, for all $i$ and $t$. Meanwhile by Remark \ref{rem:strictly x mono}, $q$ must be strictly $x$-monotone. First by the same argument given at the beginning of the previous proof, $q$ is a generalized solution of \eqref{eqn:nbody}. 

Depending on whether $\ell$ is even or odd, the proof will be slightly different. 

First let's assume $\ell =2l$, by the action of $h_2$, for any $t$
\begin{align*}
(x_l, y_l, z_l)(t)&=(-x_{3l}, -y_{3l}, z_{3l})(t);\\
(x_{5l}, y_{5l}, z_{5l})(t) &= (-x_{7l}, -y_{7l}, z_{7l})(t).
\end{align*}
Let $\I_0= \{l, 3l, 5l, 7l\}$. For any $i \in \I_0$, $y_i(0)=0$ by our assumption. By \eqref{eqn:q0symmetry} and the symmetric constraints, 
$$ x_l(0)=x_{5l}(0)=x_0(n/4)=0; \;\; x_{3l}(0) = x_{7l}(0)= x_0(3n/4)=0. $$
Hence $q$ has an $\I_0$-cluster collision at $t=0$. By the same argument given in the previous proof, we may assume it is also isolated. Since $q$ is strictly $x$-monotone, no other mass can be involved in this isolated collision cluster. To see these, one can project the paths in Figure \ref{fig:even}(c) to the $x$-axis.

Choose a $\tau=(\tau_i)_{i \in \N}$ with
$$ \tau_l=\tau_{7l}=1, \; \tau_{3l}=\tau_{5l}=-1, \; \text{ and } \tau_i =0, \; \forall i \in \N \setminus \I_0.$$
By Lemma \ref{lm:dfm4}, there is a new path $\qe(t), t \in [0, 1/2]$ with $\A_{1/2}(\qe) < \A_{1/2}(q)$ (see Figure \ref{fig:dfm2}(a)). As we can see only the paths of $m_i$, $i \in \I_0$, were changed in the above process, and $\qe$ satisfies the required symmetric, monotone and topological constraints, so it is contained in $\lmn_{\le, \om}$, which is a contradiction. 

\begin{figure}
  \centering
  \includegraphics[scale=0.70]{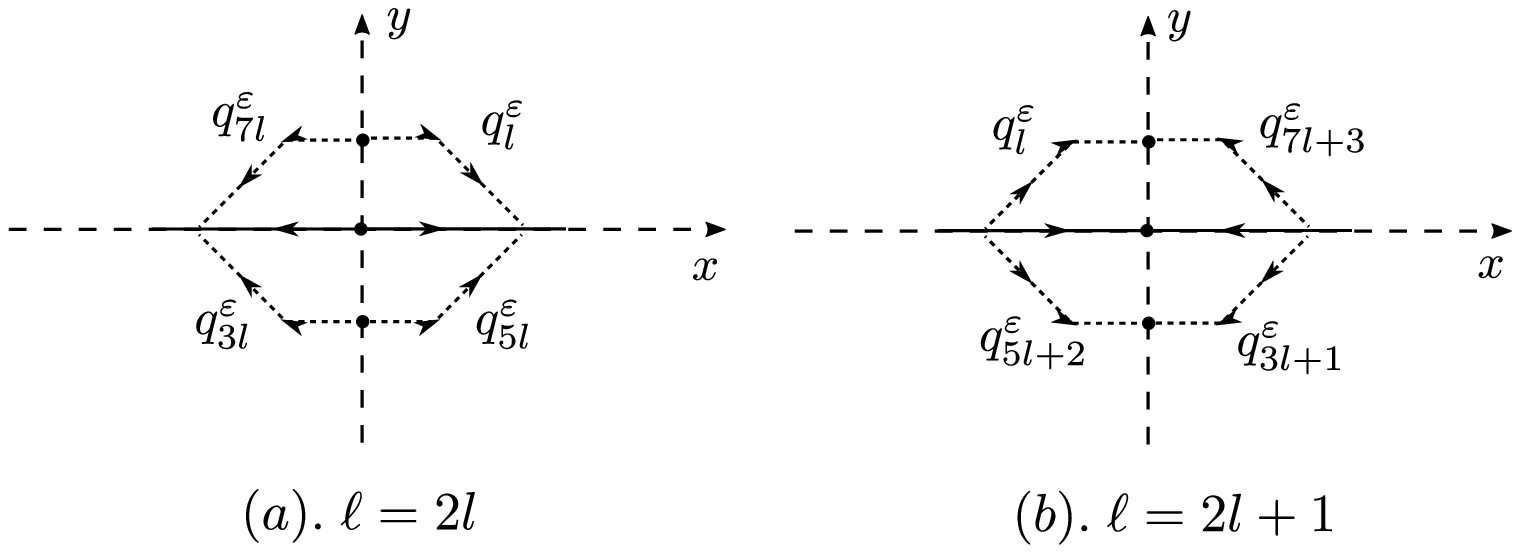}
  \caption{}
  \label{fig:dfm2}
\end{figure}

When $\ell= 2l+1,$ by the action of $h_2$,
\begin{align*}
(x_l, y_l, z_l)(t) &=(-x_{3l+1}, -y_{3l+1}, z_{3l+1})(t); \\
(x_{5l+2}, y_{5l+2}, z_{5l+2})(t) &= (-x_{7l+3}, -y_{7l+3}, z_{7l+3})(t).
\end{align*}
Now let's assume $\I_0=\{l, 3l+1, 5l+2, 7l+3\}.$ Then for any $i \in \I_0$, $y_i(0)=0$ by our assumption. By \eqref{eqn:q0symmetry} and the symmetric constraints
$$ x_l(1/2)=x_{5l+2}(1/2) = x_0(n/4)=0; \;\; x_{3l+1}(1/2)= x_{7l+3}(1/2)=x_0(3n/4)=0. $$
Therefore $q$ has an $\I_0$-cluster collision at $t =1/2$. Again we may assume it is isolated. Like above no other masses are involved in this isolated collision cluster, as $q$ is strictly $x$-monotone. Again one could project the path in Figure \ref{fig:even}(d) to the $x$-axis to see these.

Now we choose a $\tau=(\tau_i)_{i \in \N}$ with
$$ \tau_l=\tau_{7l+3}=1, \; \tau_{3l+1}=\tau_{5l+2}=-1, \; \text{ and } \tau_i =0, \; \forall i \in \N \setminus \I_0.$$
By similar arguments used as above (instead of Lemma \ref{lm:dfm4}, Lemma \ref{lm:dfm5} should be used now), we can find a new path $\qe \in \lmn_{\le, \om}$ (see Figure \ref{fig:dfm2}(b)) with $\A_{1/2}(\qe) < \A_{1/2}(q)$, which is a contradiction.
\end{proof}

The above lemma only considers even $n$, we still need to prove the same result for odd $n$. We point out that for odd $n$, even when all the masses travel in the $xz$-plane the entire time, the symmetric constraints alone do not guarantee the existence of a collision. However when we take the $\om$-topological constraints into consideration as well, then this can be guaranteed. This is also why condition \eqref{eqn:odd om} was required.

\begin{lm}
\label{lm:B3} When $n$ is odd $(n=2\ell +1)$, for any $\om \in \Om_{[n/2]}$ satisfies condition \eqref{eqn:odd om}, if $\qo$ is a minimizer of the action functional in $\lmn_{\le, \om}$, then $y_0^{\om}(n/4)>0$. 
\end{lm}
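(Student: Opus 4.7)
The plan is to argue by contradiction along the same lines as Lemma \ref{lm:B2}, with the essential new ingredient being assumption \eqref{eqn:odd om}. Suppose $y_0^\om(n/4) \le 0$. Combined with $y$-monotonicity on $[0, n/4]$ and the identity $y_0^\om(0)=0$ from \eqref{eqn:q0symmetry}, this forces $y_0^\om \equiv 0$ on $[0, n/4]$, and by the $G_n$-equivariance the entire minimizer $\qo$ lies in the $xz$-plane. By Lemma \ref{lm:B1} together with Proposition \ref{prop:smc}, $\qo$ is still strictly $x$-monotone, and by the argument recalled at the start of the proof of Lemma \ref{lm:B1} it is a generalized solution of \eqref{eqn:nbody}. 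Unlike the even case, the symmetric constraints alone do not force a collision in the $xz$-plane; this is precisely where assumption \eqref{eqn:odd om} enters. Pick $\{i_1 \ne i_2\} \subset \{1, \dots, \ell\}$ with $\om_{i_1} \ne \om_{i_2}$, so that the $\om$-topological constraints make $z_0^\om(i_1/2)$ and $z_0^\om(i_2/2)$ have opposite signs. By continuity there exists a $t^*$ strictly between $i_1/2$ and $i_2/2$, hence $t^* \in (1/2, \ell/2) \subset (0, n/4)$, with $z_0^\om(t^*)=0$.

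At such a $t^*$ the point $q_0^\om(t^*) = (x_0^\om(t^*), 0, 0)$ lies on the $x$-axis, and the $h_1$-action gives $q_n^\om(t^*) = \R_x q_0^\om(t^*) = q_0^\om(t^*)$, producing an $\{0, n\}$-cluster collision at $t=t^*$. As in the previous lemmas one may assume this collision is isolated, for otherwise an isolated interior collision of $\qo$ in $(0, n/4)$ exists nearby and (after shifting to the fundamental domain by a $\uptau(G_n)$ time translation) Lemma \ref{lm: isolated collision odd} already yields the contradiction. I then apply the $\e_2$-variant of Lemma \ref{lm:dfm6} (cf.\ Remark (ii) following it) to perform a local deformation of $\qo$ perpendicular to the $xz$-plane, choosing $\tau \in \mf{T}$ $G_n$-equivariantly as in Remark (i): set $\tau_0=1,\ \tau_n=-1$, at each of the finitely many $G_n$-images of $t^*$ carrying a simultaneous cluster of the form $q_k=q_{k+n}$ propagate the signs by the $\rho(g)$-rule (since every $\rho(g)$ in play sends $\e_2$ to $\pm\e_2$, condition \eqref{eqn:cond tau} holds at every cluster), and set $\tau_i=0$ elsewhere. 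The resulting path $q^\ep$ satisfies $\A_n(q^\ep) < \A_n(\qo)$, preserves the $\om$-topological constraints (which depend only on $z_0$) and strict $x$-monotonicity (the deformation lies entirely in $\e_2$).

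The main obstacle is that $q^\ep$ will typically violate $y$-monotonicity on $[0, n/4]$: originally $y_0^\om \equiv 0$, and after the local deformation $y_0^\ep$ is a positive bump around $t^*$ rather than nondecreasing. I restore monotonicity by a reflection analogous to \eqref{eq: qt j1}--\eqref{eq: qt i2}, setting
\[
\qt_0^\ep(t) = \begin{cases} q_0^\ep(t), & t \in [0, t^*], \\ q_0^\ep(t) + 2\bigl(y_0^\ep(t^*) - y_0^\ep(t)\bigr)\e_2, & t \in [t^*, n/4], \end{cases}
\]
and extending $\qt^\ep$ to the full loop by $G_n$-equivariance (adjusting the other indices by the appropriate constant $\e_2$-translates so that all symmetries and the anchor $\tilde y_0^\ep(0)=0$ continue to hold). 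By construction $\tilde y_0^\ep$ is nondecreasing on $[0, n/4]$; the reflection preserves the kinetic energy since it only flips the sign of $\dot y_0^\ep$ on $[t^*, n/4]$; and as in Lemma \ref{lm: isolated collision even} it does not increase any pairwise distance inverse, and hence does not increase the potential energy. Therefore $\A_n(\qt^\ep) \le \A_n(q^\ep) < \A_n(\qo)$ while $\qt^\ep \in \lmn_{\le, \om}$, contradicting the minimality of $\qo$.
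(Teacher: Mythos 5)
Your overall strategy (reduce to the planar $xz$-case, use \eqref{eqn:odd om} to produce a vanishing of $z$, deform out of the plane with the $\e_2$-variants of the local lemmas, then restore $y$-monotonicity by a reflection plus constant translates) is the paper's strategy, but there is a genuine gap at the very first step, where you locate the collision. You claim that $\om_{i_1}\ne\om_{i_2}$ forces $z_0^{\om}(i_1/2)$ and $z_0^{\om}(i_2/2)$ to have strictly opposite signs, hence a zero $t^*$ of $z_0^{\om}$ \emph{strictly} between $i_1/2$ and $i_2/2$, at a time with trivial isotropy carrying only the two-body cluster $\{0,n\}$. In $\lmn_{\le,\om}$ the constraint \eqref{eqn:tc} only says $\om_i z_0(i/2)\ge 0$: the values $z_0(i/2)$ are allowed to vanish (Remark \ref{rem:monotone}(b)), and the degenerate planar minimizer you are trying to rule out is exactly the situation where such vanishing must be anticipated. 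So the intermediate value theorem only gives a zero $t^*$ in the \emph{closed} interval $[i_1/2,i_2/2]$, and $t^*$ may well be a half-integer. At a half-integer time the time-reflection elements of $G_n$ have nontrivial isotropy, and since all $y_i\equiv 0$ this forces a \emph{four}-body collision cluster (in the paper's normalization, e.g. $q_{k/2}(0)=q_{n-k/2}(0)=q_{k/2+n}(0)=q_{2n-k/2}(0)$; in your time frame the cluster at $t^*=k/2$ is $\{0,\,n,\,n-k,\,2n-k\}$). Your choice $\tau_0=1$, $\tau_n=-1$ and your reflection of $q_0^{\ep}$ about the level $y_0^{\ep}(t^*)$ on $[t^*,n/4]$ are tailored to the interior two-body cluster and do not cover this case; one cannot avoid it by choosing a different zero, since $z_0$ may vanish only at half-integers. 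This is precisely why the paper, after passing to consecutive indices with $\om_k\ne\om_{k+1}$, splits into an interior case and the two boundary cases $t_0=0$ and $t_0=1/2$ (plus the analogous cases for $k$ odd), where Lemmas \ref{lm:dfm4} and \ref{lm:dfm5} are applied with an equivariant four-body $\tau$ and with different follow-up deformations (its \emph{Case 2} and \emph{Case 3}).

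Two further points are only sketched in your write-up, though they are repairable along the paper's lines: the local deformation must be performed on a fundamental piece and extended by symmetry (Remark (i) after Lemma \ref{lm:dfm6} only governs simultaneous clusters at the same instant, not the whole time orbit of $t^*$), and the ``appropriate constant $\e_2$-translates'' of the remaining indices must be specified so that no mutual distance decreases — this is what the index sets $\I_{11},\I_{12},\I_{13}$ and the shifts by $\yt^{\ep}_{k/2}(1/2)$, $\yt^{\ep}_{k/2+n}(1/2)$ accomplish in the paper's proof. As it stands, your argument proves only the analogue of the paper's \emph{Case 1}.
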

\begin{proof}
For simplicity let $q=\qo$. By a contradiction argument, let's assume $y_0(n/4)=0$, then the masses must always stay in the $xz$-plane. By the same argument given at the beginning of the proof of Lemma \ref{lm:B2}, $q(t)$ is strictly $x$-monotone, and it is a generalized solution of \eqref{eqn:nbody}. 

By condition \eqref{eqn:odd om}, there is a $1 \le k \le [n/2]-1$, such that $\om_k \ne \om_{k+1}$. Without loss of generality, we assume $\om_k=-1$ and $\om_{k+1}=1$ for the rest of the proof. 

Depending on whether $k$ is even or odd, the proof will be slightly different. First let's assume $k$ is even. From \eqref{eqn:g1} and \eqref{eqn:tc}, we get 
\begin{align*}
z_{k/2}(0) &= z_0(k/2) = \om_k |z_0(k/2)| = - |z_0(k/2)| \le 0; \\
z_{k/2}(1/2) &= z_0(\frac{k+1}{2}) = \om_{k+1}|z_0(\frac{k+1}{2})|= |z_0(\frac{k+1}{2})| \ge 0, 
\end{align*}
This implies $z_{k/2}(t_0)=0$, for some $t_0 \in [0, 1/2]$. Recall the action of $h_1$ implies, 
$$ (x_{k/2}, y_{k/2}, z_{k/2})(t) = (x_{k/2+n}, -y_{k/2+n}, -z_{k/2+n})(t), \;\; \forall t. $$
Hence $z_{k/2+n}(t_0)=-z_{k/2}(t_0)=0.$ Meanwhile our assumption implies $y_{k/2}(t_0)= y_{k/2 +n}(t_0)=0$. As a result, $q_{k/2}(t_0) = q_{k/2 +n}(t_0)$ belong to the $x$-axis, and $q$ has a collision involving at least $m_{k/2}$ and $m_{k/2+n}$ at the moment $t_0$. We may assume such a collision is isolated, otherwise there must be an isolated collision moment close to $t_0$ and a contradiction can be reach by Lemma \ref{lm: isolated collision odd}.  

As $t_0 \in [0, 1/2]$, although $[0, 1/4]$ is a fundamental domain of $q \in \lmn_{\le, \om}$. We will focus on the motion of the sub-system $\{m_i\}_{i \in \I}$ from $t=0$ to $t=1/2$, where 
\begin{align*}
 \I= & \{0, \dots, [\ell/2]\} \cup \{n-[(\ell+1)/2], \dots, n-1\} \cup \\
   &  \{ n, \dots, n+[\ell/2]\} \cup \{ 2n-[(\ell+1)/2], \dots, 2n-1\}.
 \end{align*} 
By this we mean the local deformation lemmas will be applied to $(q_i(t))_{i \in \I}$, $t \in [0, 1/2]$, while for any $i \in \N \setminus \I$, $q_i(t)$, $t \in [0, 1/2]$ will be given by the action of $h_2$. 
 
Depending on the value of $t_0$, three different cases need to be considered and correspondingly three different disjoint partitions of the index set $\I= \cup_{j=0}^{3}\I_{ij}$, $i =1,2$ and $3,$ will be given. Notice that $\{k/2, k/2 +n\} \subset \I. $

\emph{Case 1:} $t_0 \in (0, 1/2)$. Let $\I_{10}=\{k/2, k/2+n\}$. Since $q$ is strictly $x$-monotone, for any $i \in \I \setminus \I_{10}$, one of the following must hold
$$ x_i(t_0) < x_{k/2}(0)< x_{k/2}(t_0); \;\; \;\; x_i(t_0)> x_{k/2}(1/2) > x_{k/2}(t_0).$$ 
Therefore no other mass can collide with $m_{k/2}$ and $m_{k/2+n}$ at the moment $t_0$ and $q$ has an isolated $\I_{10}$-cluster collision at $t_0$. 

Care has to be taken during the deformation, so that the monotone constraints will not be violated after that. For this purpose, choose an arbitrary $\qt \in \lmn_{<, \om}$ (for the rest of the proof), we define the remaining $\I_{1j}$'s as follows: 
\begin{align*}
\I_{11} &= \{i \in \I|\; \yt_i(t) > \yt_{k/2}(1/2), \;\; \forall t \in (0, 1/2) \};\\
\I_{12} &= \{i \in \I|\; \yt_{k/2+n}(0) < \yt_i(t) < \yt_{k/2}(0), \;\; \forall t \in (0, 1/2)\};\\
\I_{13} &= \{i \in \I|\; \yt_i(t) < \yt_{k/2+n}(1/2), \;\; \forall t \in (0,1/2)\}.
\end{align*}
The above definitions obviously is independent of the choice of $\qt \in \lmn_{<, \om}$. 

Now choose a $\tau= (\tau_i)_{i \in \I}$ satisfying 
$$ \tau_{k/2}=1, \; \tau_{k/2 +n}=-1, \; \text{ and }\; \tau_i =0, \; \forall i \in \I \setminus \I_{10}. $$
With such a $\tau$, we can get a new path $(\qe_i(t))_{i \in \I}, t \in [0,1/2]$ by making a local deformation of $(q_i(t))_{i \in \I}$ near $t_0$ based on Lemma \ref{lm:dfm6} (see Figure \ref{fig:dfm3}(a)). 
\begin{figure}
  \centering
  \includegraphics[scale=0.80]{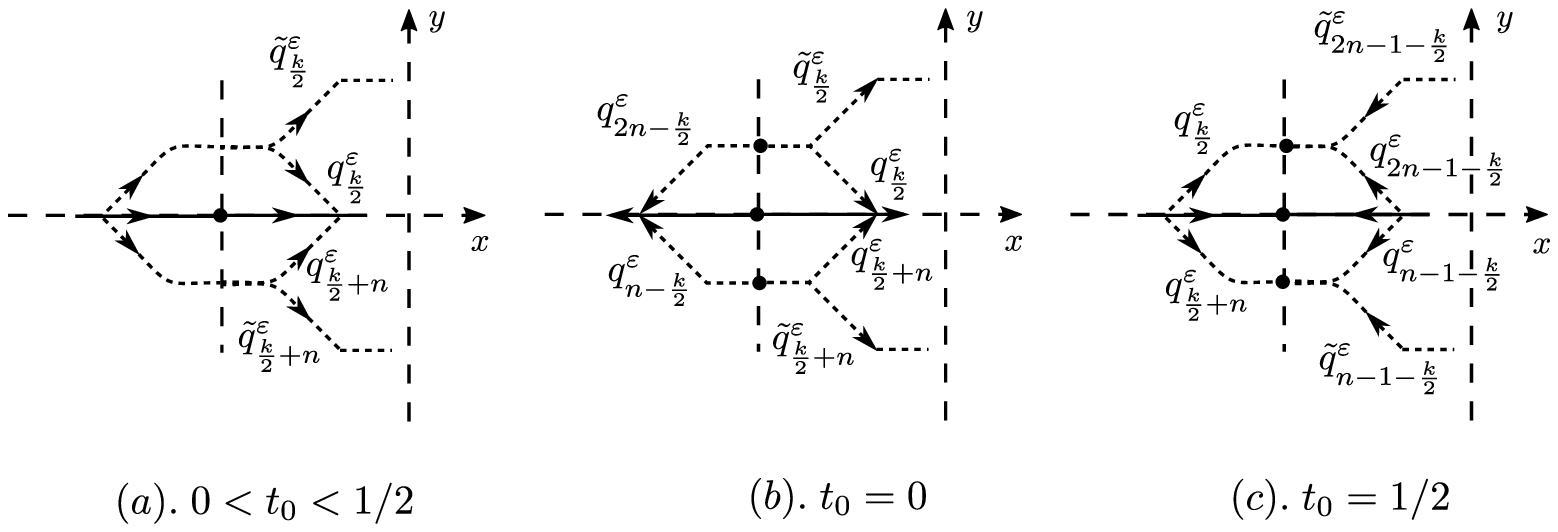}
  \caption{}
  \label{fig:dfm3}
\end{figure}

The new path clearly can not be $y$-monotone, so we make a further deformation of $(\qt_i(t))_{i \in \I}$ to get a path $ (\qt^{\ep}_i(t))_{i \in \I}, t \in [0,1/2]$ as follows 
\begin{align*}
&\qt^{\ep}_{i}(t) = 
\begin{cases}
\qe_i(t),\;\; \forall t \in [0, t_0];\\
\R_{xz}\qe_i(t)+2y_i^{\ep}(t_0)\e_2, \;\; \forall t \in [t_0, 1/2], 
\end{cases}
\;\; &&\text{if } i \in \I_{10}, \\
&\qt^{\ep}_i(t) = \qe_i(t)+ \yt^{\ep}_{k/2}(1/2) \e_2, \;\; \forall t \in [0, 1/2],\;\; &&\text{if } i \in \I_{11}, \\
&\qt^{\ep}_i(t) = \qe_i(t), \;\; \forall t \in [0, 1/2], \;\; &&\text{if } i \in \I_{12}, \\
&\qt^{\ep}_i(t) = \qe_i(t)+ \yt^{\ep}_{k/2+n}(1/2)\e_2, \;\; \forall t \in [0, 1/2], \;\; &&\text{if } i \in \I_{13}.  
\end{align*}
Figure \ref{fig:dfm3}(a) shows how $\qt^{\ep}_i, i \in \I_{10}$ are defined. As usual, each $\qt^{\ep}_i(t), i \in \N \setminus \I$ will be defined through the symmetric constraints. By the way $\qt^{\ep}(t)=(\qt^{\ep}_i(t))_{i \in \N}$ is defined, it belongs to $\lmn_{\le, \om}$. Meanwhile by a similar argument as in the proof of Lemma \ref{lm:B1}, we get $\A_{1/2}(\qt) < \A_{1/2}(\qe)< \A_{1/2}(q)$, which is a contradiction. This finishes our proof of \emph{Case 1}. 

\emph{Case 2}: $t_0=0$. The action of $g_2$ implies 
$$ (x_{k/2}, y_{k/2}, z_{k/2})(0) = (x_{n -k/2}, -y_{n-k/2}, z_{n-k/2})(0);$$
$$ (x_{k/2+n}, y_{k/2+n}, z_{k/2+n})(0) = (x_{2n -k/2}, -y_{2n-k/2}, z_{2n-k/2})(0).$$
Recall that $q_{k/2}(0) = q_{k/2+n}(0)$ is contained in the $x$-axis. Therefore 
$$ q_{k/2}(0) = q_{2n-k/2}(0) = q_{k/2+n}(0) = q_{n-k/2}(0). $$
Like in \emph{Case 1}, the strictly $x$-monotone constraints guarantee no other mass can be involved in this collision cluster, so $q$ has an isolated $\I_{20}$-cluster collision at $t=0$, where $\I_{20}=\{k/2, k/2+n, 2n-k/2, n-k/2 \}$. The rest $\I_{2j}$'s are defined following the same principle as in \emph{Case 1}. To be precise, $\I_{21}=\I_{11}, \I_{23}=\I_{13}$ and $\I_{22} = \I_{12} \setminus \{ 2n-k/2, n-k/2\}$. 

Now choose a $\tau=(\tau_i)_{i \in \I}$ satisfying
$$ \tau_{k/2}= \tau_{2n-k/2}=1, \; \tau_{k/2+n} = \tau_{n-k/2} =-1, \; \text{ and }\; \tau_i =0, \; \forall i \in \I \setminus \I_{20}. $$
With such a $\tau$, we get a $(\qe_i(t))_{i \in \I}, t \in [0,1/2]$, by deforming $(q_i(t))_{i \in \I}$ locally near $t=0$ using Lemma \ref{lm:dfm4} (see Figure \ref{fig:dfm3}(b)). This new path clearly is not $y$-monotone, so we further deform it as follows to get a $(\qt^{\ep}_i(t))_{i \in \I},$ $t \in [0,1/2]$:
\begin{align*}
&\qt^{\ep}_i(t) = \R_{xz}\qe_i(t)+2y_i^{\ep}(0)\e_2,\;\; &&\forall t \in [0,1/2], \;\; &&\text{if } i \in \I_{10}, \\
&\qt^{\ep}_i(t) = \qe_i(t)+ \yt^{\ep}_{k/2}(1/2)\e_2, \;\; &&\forall t \in [0,1/2],\;\; &&\text{if } i \in \I_{21}, \\
&\qt^{\ep}_i(t) = \qe_i(t), \;\; && \forall t \in [0,1/2], \;\; && \text{if } i \in \I_{22} \cup (\I_{20} \setminus \I_{10}), \\
&\qt^{\ep}_i(t) = \qe_i(t)+ \yt^{\ep}_{k/2+n}(1/2)\e_2, \;\; && \forall t \in [0,1/2], \;\; &&\text{if } i \in \I_{23}.
\end{align*}
Again see Figure \ref{fig:dfm3}(b) for an illuminating picture. The rest is just like \emph{Case 1}. 

\emph{Case 3}: $t_0=1/2$. The action of $g_2$ implies 
$$ (x_{k/2}, y_{k/2}, z_{k/2})(1/2) = (x_{n-1-k/2}, -y_{n-1-k/2}, z_{n-1-k/2})(1/2);$$
$$ (x_{k/2+n}, y_{k/2+n}, z_{k/2+n})(1/2) = (x_{2n-1-k/2}, -y_{2n-1-k/2}, z_{2n-1-k/2})(1/2).$$
As $q_{k/2}(1/2) = q_{k/2+n}(1/2)$ is contained in the $x$-axis, we have
$$ q_{k/2}(1/2) = q_{2n-1-k/2}(1/2) = q_{k/2+n}(1/2)= q_{n-1-k/2}(1/2). $$
Like in \emph{Case 2}, as $q$ is strictly $x$-monotone, it has an isolated $\I_{30}$-cluster collision at $t=1/2$, where $\I_{30}=\{k/2, k/2+n, 2n-1-k/2, n-1-k/2\}$. Furthermore we let $\I_{31}= \I_{11} \setminus \{2n-1-k/2\}$, $\I_{32}= \I_{12}$ and $\I_{33}= \I_{13} \setminus \{n-1-k/2\}.$

Choose a $\tau=(\tau_i)_{i \in \I}$ satisfying 
$$ \tau_{k/2}= \tau_{2n-1-k/2}=1, \; \tau_{k/2+n} = \tau_{n-1-k/2}=-1, \; \text{ and } \; \tau_i=0, \; \forall i \in \I \setminus \I_{30}. $$
With such a $\tau$, we get a new path $(\qe_i(t))_{i \in \I}, t \in [0, 1/2]$, by deforming $(q_i(t))_{i \in \I}$ locally near $t=1/2$ using Lemma \ref{lm:dfm5} (see Figure \ref{fig:dfm3}(c)). 

Again such a $q^{\ep}$ is not $y$-monotone, so we further deform it as follows to get a $(\qt^{\ep}_i(t))_{i \in \I}$, $t \in [0,1/2]$:
\begin{align*}
& \qt^{\ep}_i(t)= \R_{xz}\qe_i(t) + 2\qe_i(1/2)\e_2, \;\; &&\forall t \in [0,1/2], \;\; && \text{if } i \in \I_{30} \setminus \I_{10}, \\
& \qt^{\ep}_i(t)=\qe_i(t) + \yt^{\ep}_{k/2}(0) \e_2, \;\; &&\forall t \in [0,1/2], \;\; && \text{if } i \in \I_{31}, \\
& \qt^{\ep}_i(t)= \qe_i(t),\;\; &&\forall t \in [0,1/2], \;\; && \text{if } i \in \I_{32} \cup \I_{10}, \\
& \qt^{\ep}_i(t) = \qe_i(t)+\yt^{\ep}_{n-1-k/2}(0) \e_2, \;\; &&\forall t \in [0,1/2], \;\; && \text{if } i \in \I_{33}. 
\end{align*}
See Figure \ref{fig:dfm3}(c) for a picture. The rest is just like \emph{Case 1}. 

This finishes our proof of $k$ being even. Now let's assume $k$ is odd, by \eqref{eqn:g1} and \eqref{eqn:tc},
\begin{align*}
&z_{2n-1-[k/2]}(1/2) = z_0(k/2) = \om_k |z_0(k/2)| = -|z_0(k/2)| < 0;\\
&z_{2n-1-[k/2]}(0) = z_0(\frac{k+1}{2}) = \om_{k+1}|z_0(\frac{k+1}{2})| = |z_0(\frac{k+1}{2})| >0.
\end{align*}
This implies $z_{2n-1-[k/2]}(t_0)=0$, for some $t_0 \in [0,1/2]$. By the action of $h_1$, 
$$ q_{n-1-[k/2]}(t) =  \mf{R}_{x}q_{2n-1-[k/2]}(t), \;\; \forall t. $$
This shows $z_{n-1-[k/2]}(t_0)= -z_{2n-1-[k/2]}(t_0)=0$. Meanwhile by our assumption $ y_{n-1-[k/2]}(t_0) = y_{2n-1-[k/2]}(t_0) =0$. As a result, $q_{n-1-[k/2]}(t_0)= q_{2n-1-[k/2]}(t_0)$ belong to the $x$-axis, and $q$ has an isolated collision at the moment $t_0$ involving at least $m_{n-1-[k/2]}$ and $m_{2n-1-[k/2]}$. By similar arguments as above, we can show $q$ has an isolated $\I_0$-cluster collision at $t_0$, where 
$$ \I_0= \begin{cases}
\{n-1-[k/2], 2n-1-[k/2]\}, \;\; &\text{ if } t_0 \in (0, 1/2), \\
\{ 1+[k/2], n+1+[k/2], n-1-[k/2], 2n-1-[k/2] \}, \;\; & \text{ if } t_0=0, \\
\{[k/2], n+[k/2], n-1-[k/2], 2n-1-[k/2]\}, \;\; & \text{ if } t_0=1/2. 
\end{cases}
$$
In each of those three cases, a contradiction can be reached by a similarly argument used in the corresponding case with $k$ being even. We omit the details here. 
\end{proof}

\mbox{}

\emph{Acknowledgements.} The author wishes to express his gratitude to Kuo-Chang Chen, Jacques F\'ejoz and Richard Montgomery for their interests and comments on this work. He thanks the anonymous referee for a careful reading of the paper and useful suggestions. He thanks the supports of Ke Zhang through NSERC at University of Toronto and Xijun Hu through NSFC(No. 11425105) at Shandong University, where most of the work was done.

%\textbf{Conflict of Interest}. The authors declare that they have no conflict of interest. 

\bibliographystyle{abbrv}
\bibliography{ref-DoubleChore}

\end{document}